\documentclass[a4paper,11pt]{amsart}
\usepackage{graphics}
\usepackage{amscd}
\usepackage{comment}
\usepackage{oldgerm}
\usepackage{amsmath}
\usepackage{amssymb}
\usepackage{amsthm}
\newtheorem{thm}{Theorem}[section]
\newtheorem{prop}[thm]{Proposition}
\newtheorem{lem}[thm]{Lemma}
\newtheorem{cor}[thm]{Corollary}
\newtheorem{definition}[thm]{Definition}
\newtheorem{rem}[thm]{Remark}

\newcommand{\Z}{\mathbb{Z}}
\newcommand{\N}{\mathbb{N}}
\newcommand{\R}{\mathbb{R}}
\newcommand{\C}{\mathbb{C}}

\newcommand{\geh}{\mathfrak{g}}

\newcommand{\GH}{\stackrel{{\rm GH}}{\longrightarrow}}
\newcommand{\mGH}{\stackrel{{\rm mGH}}{\longrightarrow}}

\newcommand{\GmGH}{\stackrel{G\mathchar`-{\rm mGH}}{\longrightarrow}}
\newcommand{\Mosco}{\stackrel{{\rm M}}{\to}}
\newcommand{\str}{\stackrel{{\rm str}}{\to}}
\newcommand{\cpt}{\stackrel{{\rm cpt}}{\to}}
\begin{document}
\title[On spectral convergence of vector bundles]{On spectral convergence of vector bundles and 
convergence of principal bundles}
\author{Kota Hattori}
\address{Keio University, 
3-14-1 Hiyoshi, Kohoku, Yokohama 223-8522, Japan}
\email{hattori@math.keio.ac.jp}
\thanks{Partially 
supported by Grant-in-Aid for Young Scientists (B) Grant Number
16K17598 and by JSPS 
Core-to-Core Program, ``Foundation of a Global Research Cooperative Center in Mathematics
focused on Number Theory and Geometry''.}
\subjclass[2010]{53C07, 58J50}
\maketitle
\begin{abstract}
In this article we consider the continuity of 
the eigenvalues of the connection Laplacian of 
connections on vector bundles over Riemannian manifolds. 
To show it, we consider 
the equivariant version of the measured Gromov-Hausdorff 
topology on the space of metric measure spaces 
with isometric actions, and apply it to 
the total spaces of principal bundles equipped 
with connections over Riemannian manifolds.
\end{abstract}
\section{Introduction}
For a closed connected 
Riemannian manifold $(X,g)$, 
the Laplace operator is defined by 
\begin{align*}
\Delta f:= d^*d f
\end{align*}
for any smooth functions $f$ on $X$, 
where $d^*$ is the formal adjoint of 
exterior derivative $d$. 
As one of the fundamental results 
of harmonic analysis, we obtain the 
orthonormal basis $\{ f_j\}_j$ of $L^2(X)$ 
and eigenvalues 
\begin{align*}
0=\lambda_1<\lambda_2\le\lambda_3\le\cdots
\end{align*}
such that $\lim_{j\to\infty}\lambda_j = \infty$ 
and $\Delta f_j=\lambda_j f_j$. 

Here, we consider the function $\lambda_j$, 
which maps the isometric class of 
closed Riemannian manifolds to the 
$j$-th eigenvalue of its Laplacian. 
In \cite{Fukaya1987}, Fukaya has shown the continuity of 
$\lambda_j$ with respect to the measured 
Gromov-Hausdorff 
topology under the assumptions that 
the sectional curvatures of 
Riemannian manifolds are bounded. 
He also conjectured that the continuity should 
holds if the lower bound of the 
Ricci curvatures are given. 

His conjecture was solved by 
Cheeger and Colding in \cite{Cheeger-Colding3}. 
They defined the Laplacian on the 
metric measure 
space $(X,d,\mu)$ which is the 
measured 
Gromov-Hausdorff limit of a sequence of 
smooth connected closed Riemannian manifolds 
$\{ (X_i,g_i)\}$ 
with 
\begin{align*}
{\rm diam}(X_i) \le D,\quad
{\rm Ric}_{g_i}\ge \kappa g_i
\end{align*}
for some constant $D>0$ and $\kappa\in\R$, 
then they showed that  
\begin{align*}
\lim_{i\to \infty}\lambda_j{(X_i,g_i)} = \lambda_j{(X,d,\mu)}.
\end{align*}
Here, $(X_i,g_i)$ can be regarded as a 
metric measure space by the 
Riemannian distance $d_{g_i}$ and 
the provability measure $\frac{\mu_{g_i}}{\mu_{g_i}(X)}$, 
where $\mu_{g_i}$ is the Riemannian measure.

On the Riemannian manifolds, the Laplace operators 
can be also defined for the 
differential forms or 
smooth section of vector bundle 
with the metric connections. 
In \cite{Lott-form2002}, Lott discussed with 
the eigenvalues of the Laplacian acting on differential forms 
on the Riemannian manifolds with bounded sectional curvatures and 
diameters, but may be collapsing. 
He also discussed with the eigenvalues of 
the Dirac operators in 
\cite{Lott-Dirac2002}. 
In \cite{Honda2017spectral},
Honda showed the continuity of 
the eigenvalues of 
the Hodge Laplacian acting on the $1$-forms 
with respect to the measured Gromov-Hausdorff 
topology under the assumption that the Ricci 
curvatures are bounded from below and above, 
the diameters are bounded from above and 
the volumes are bounded from below. 
He also showed the continuity of 
the eigenvalues of connection Laplacian 
acting on the tensor bundle 
of cotangent bundles and tangent bundles under 
the same assumptions. 

In another direction, 
Kasue showed the convergence 
of the eigenvalues of the connection Laplacians on vector bundles over the Riemannian 
manifolds who have the uniform estimates of heat kernels in \cite{Kasue2011}. 
He considered the vector bundle 
$E_\rho=P\times_\rho V$ with $G$-connection $\nabla$ on a smooth Riemannian 
manifold $(X,g)$. 
Here, $P$ is a principal $G$-bundle, $(\rho,V)$ is a 
representation of $G$, $P\times_\rho V$ is the associate bundle 
and $\nabla=\nabla^A$ is the connection on $E_\rho$ 
induced by a connection form $A\in\Omega^1(P,\mathfrak{g})$. 
Then the connection Laplacian 
$\nabla^*\nabla$ acting on $\Gamma(E)$ can be identified 
with the Laplacian of the certain Riemannian metric 
$h=h(g,A,\sigma)$ on $P$ 
determined by $g,A$ and an bi-invariant metric 
$\sigma$ on $G$. 

In this article, we also consider the continuity of 
the connection Laplacian on $E_\rho$. 
The difference between \cite{Kasue2011} and this 
article is the topologies on the space of 
the vector bundles with connections. 
In this article we consider the asymptotically $G$-equivariant 
measured Gromov-Hausdorff topology on the space of 
compact metric spaces with isometric $G$-actions, 
which was also introduced in \cite{FuakayaYamaguchi1994} and 
\cite{Lott-Dirac2002}\cite{Lott-form2002}. 
Then we can define the convergence of 
the sequence of 
$\{ (P_i,A_i)\}_i$, where $P_i$ is a principal 
$G$-bundle over a Riemannian manifold $(X_i,g_i)$ 
and $A_i$ is a $G$-connection on  $P_i$. 
Denote by $\lambda^\rho_{i,j}$ the $j$-th eigenvalue 
of ${\nabla^{A_i}}^*\nabla^{A_i}$ acting on $\Gamma(P_i\times_\rho V)$. 
The aim of this article is 
to show that if $\{ (P_i,A_i)\}_i$ is the convergent 
sequence, then the limit $\lim_{i\to\infty}\lambda^\rho_{i,j}$ 
exists for any $j$ and $(\rho,V)$. 
As a consequence, we have the following theorem. 
\begin{thm}
Let $G$ be a compact Lie group and 
$(\rho,V)$ is a real unitary representation of $G$. 
For any $\kappa\in\R,D,N>0$ and $j\in\Z_{\ge 0}$, 
there exist constants $0\le C_j$ 
depending only on 
$n,\kappa,D,N,j,G,\rho,V$ such that 
$\lim_{j\to\infty} C_j=\infty$ and 
the following holds. 
For any closed Riemannian manifold $(X,g)$ 
of dimension $n$ and principal 
$G$-bundle $\pi\colon P\to X$ with 
the $G$-connection $A$ such that 
\begin{align*}
&{\rm dim}\, X = n,\quad 
{\rm Ric}_g \ge \kappa g,\quad 
{\rm diam}\, X \le D,\\
&\| (d^{\nabla^A})^* F^A \|_{L^\infty} \le N, \quad
\| F^A \|_{L^\infty} \le N,
\end{align*}
we have 
\begin{align*}
\lambda_j^{\nabla^{A}} \ge C_j,
\end{align*}
where $F^A\in\Omega^2(X,E_\rho)$ is 
the curvature form of $A$. 
\label{main 1-1}
\end{thm}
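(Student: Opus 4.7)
The plan is to argue by contradiction, combining Gromov precompactness with the spectral continuity established in this paper, applied not to the base $X$ but to the total space $P$ equipped with a Kaluza--Klein type metric. The crucial bridge is Kasue's identification of $(\nabla^{A})^*\nabla^A$ on $\Gamma(E_\rho)$ with the restriction of the scalar Laplacian $\Delta_h$ of $(P,h)$ to the $\rho$-isotypic subspace of $L^2(P)$, where $h = h(g,A,\sigma)$ is the Riemannian metric on $P$ built from $g$, the connection form $A$, and a fixed bi-invariant metric $\sigma$ on $G$. Consequently each eigenvalue $\lambda_j^{\nabla^{A}}$ occurs in the spectrum of $\Delta_h$ with respect to its isometric $G$-action.

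The first step is to show that the family of $(P,h)$ arising from $(X,g,P,A)$ satisfying the hypotheses is precompact in the $G$-equivariant measured Gromov--Hausdorff topology. The diameter of $(P,h)$ is bounded by $D$ plus the diameter of $(G,\sigma)$, and by the O'Neill formulas the bound ${\rm Ric}_g\ge \kappa g$ together with $\|F^A\|_{L^\infty}\le N$ yields a lower Ricci bound ${\rm Ric}_h\ge \kappa'h$ with $\kappa'=\kappa'(\kappa,N,G,\sigma)$. The $L^\infty$ bound on $(d^{\nabla^A})^*F^A$ is used to control the mixed curvature terms and, more importantly, to ensure that along $G$-equivariant mGH convergent subsequences the limit carries a reasonable asymptotic connection so that the limiting Dirichlet form and its $\rho$-isotypic components are well defined.

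Define $C_j$ as the infimum of $\lambda_j^{\nabla^{A}}$ over the class of quadruples $(X,g,P,A)$ satisfying the hypotheses. By precompactness and the spectral continuity result of this paper, applied to the isotypic component corresponding to $\rho$, this infimum is realized on the closure of the class, so $C_j$ is finite and depends only on $n,\kappa,D,N,j,G,\rho,V$. To show $C_j\to\infty$, suppose otherwise: then there exist $C<\infty$ and $j_k\to\infty$ with $C_{j_k}\le C$, and quadruples $(X_k,g_k,P_k,A_k)$ in the class satisfying $\lambda_{j_k}^{\nabla^{A_k}}\le C+1$. Extract a $G$-equivariant mGH convergent subsequence of $(P_k,h_k)$ with limit $(P_\infty,h_\infty)$. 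For any fixed $\ell$, once $j_k\ge\ell$ we have
\begin{align*}
\lambda_\ell^{\nabla^{A_k}} \le \lambda_{j_k}^{\nabla^{A_k}} \le C+1,
\end{align*}
and spectral continuity yields $\lambda_\ell^{\infty}\le C+1$ for every $\ell$, contradicting discreteness of the spectrum of the limiting connection Laplacian.

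The main obstacle is the verification of the spectral continuity in this equivariant setting: one must show that on limit spaces the $G$-equivariant Dirichlet form has compact resolvent, that its eigenvalues decompose according to isotypic components of the $G$-representation on $L^2$, and that these eigenvalues are continuous along $G$-equivariant mGH convergent sequences, presumably through a Mosco-type convergence argument. A secondary technical point is the extraction of the uniform Ricci lower bound on $(P,h)$ from the curvature hypotheses via O'Neill; the codifferential condition on $F^A$ is precisely the extra input that rigidifies $A$ enough to prevent degeneration of the connection in the limit and to furnish the uniform estimate $\kappa'$.
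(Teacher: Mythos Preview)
Your proposal is correct and follows the same approach as the paper: precompactness of $(P,h(g,A,\sigma))$ in the asymptotically $G$-equivariant measured Gromov--Hausdorff topology, spectral continuity of the $\rho$-isotypic Laplacian (via the identity $\lambda_j^{\nabla^A}=\lambda_j^{\rho}-\chi_{\sigma,\rho}$), and the identical contradiction argument. One small correction: the bound on $\|(d^{\nabla^A})^*F^A\|_{L^\infty}$ is used \emph{only} to obtain the uniform lower Ricci bound on $h$ (it controls the mixed horizontal--vertical component $\hat{{\rm Ric}}(\hat{\partial}_j,e_\beta^\sharp)=\tfrac{1}{2}\{(d^\nabla)^*F\}_j^\mu\sigma_{\beta\mu}$), not to ``prevent degeneration of the connection in the limit''---the limit is treated purely as a metric measure space with isometric $G$-action, its Laplacian is the Cheeger--Colding one, and no connection structure on the limit is defined or required.
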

We also have the uniform upper estimate 
of $\lambda_j^{\nabla^{A}}$ in 
Theorem \ref{bdd of eigenvalues 2}, 
however, we need additional assumptions.

Kasue has also shown the convergence of 
the eigenvalues of connection Laplacians 
under the spectral topology in \cite{Kasue2011}. 
He also argued the relation between the spectral 
topology and the Gromov-Hausdorff topology in 
\cite{Kasue2002}

Next we apply Theorem 
\ref{main 1-1} to a compact 
K\"ahler manifold 
$(X,\omega)$ and 
a holomorphic vector bundle $E\to X$ 
with hermitian metric $h$. 
Then there is the unique connection 
$\nabla$ which preserves the holomorphic structure and $h$. 
If $\nabla$ is a Hermitian-Einstein 
connection, then it is a Yang-Mills connection.

\begin{thm}
For any $\kappa,\mu\in\R,D>0$ 
and $n,r\in\Z_{>0}$, 
there exist a positive integer $N$ 
depending only on 
$\kappa,\mu,D,n,r$ such that 
the following holds. 
For any closed K\"ahler manifold 
$(X,\omega)$ and holomorphic vector 
bundle
$E\to X$ with 
the Hermitian-Einstein connection $\nabla$ 
such that 
\begin{align*}
&\dim X = n,\quad 
{\rm rk}\, E=r,\quad 
\frac{2\pi \dim X \cdot 
c_1(E)\cdot [\omega]^{n-1}}
{{\rm rk}(E)[\omega]^n} = \mu,\\
&{\rm Ric}_\omega \ge \kappa \omega,\quad 
{\rm diam}_\omega X \le D,\quad
\| F^\nabla \|_{L^\infty} \le D,
\end{align*}
we have 
\begin{align*}
\dim H^0(X,E) \le N.
\end{align*}
\label{main 1-2}
\end{thm}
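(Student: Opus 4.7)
The plan is to combine the Bochner--Kodaira identity on K\"ahler manifolds with the universal lower bound on eigenvalues of Theorem \ref{main 1-1}. The Chern connection $\nabla$ of a Hermitian-Einstein bundle satisfies $\sqrt{-1}\Lambda_\omega F^\nabla=\mu\cdot\mathrm{id}_E$ pointwise, where $\mu$ is precisely the constant in the statement; this follows by tracing both sides, integrating against $\omega^n/n!$, and comparing with the Chern-Weil representative of $c_1(E)$. For a section $s\in\Gamma(E)$ the Bochner--Kodaira formula reads
\begin{align*}
\nabla^*\nabla s \;=\; 2\,\delb^{\,*}\delb\, s \;+\; \sqrt{-1}\Lambda_\omega F^\nabla\cdot s.
\end{align*}
Hence any $s\in H^0(X,E)$, which satisfies $\delb s=0$, obeys $\nabla^*\nabla s=\mu s$, so $H^0(X,E)$ embeds into the $\mu$-eigenspace of $\nabla^*\nabla$ on $\Gamma(E)$.

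I would then place this into the setting of Theorem \ref{main 1-1}: let $P\to X$ be the unitary frame bundle of $(E,h)$, $G=U(r)$, and $\rho$ the real unitary representation on $V=\R^{2r}$ underlying the standard complex representation on $\C^{r}$, so that $E\simeq P\times_\rho V$. The Chern connection $\nabla$ corresponds to a $G$-connection $A$ on $P$ with $\nabla^A=\nabla$. The geometric hypotheses $\mathrm{Ric}_g\ge\kappa g$, $\mathrm{diam}(X)\le D$, and $\|F^A\|_{L^\infty}\le D$ are part of the assumption. Moreover, since $\sqrt{-1}\Lambda_\omega F^\nabla$ is parallel, every Hermitian-Einstein connection is Yang--Mills, so $(d^{\nabla^A})^*F^A=0$. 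Thus all hypotheses of Theorem \ref{main 1-1} are met with the curvature bound $D$ playing the role of $N$.

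Applying Theorem \ref{main 1-1} furnishes constants $C_j\ge 0$, depending only on $n,\kappa,D,r,j$, with $C_j\to\infty$, such that the $j$-th eigenvalue of $\nabla^*\nabla$ on $\Gamma(E)$ satisfies $\lambda_j^{\nabla^A}\ge C_j$. Choose $N\in\Z_{>0}$ minimal with $C_{N+1}>\mu$; then $N$ depends only on $\kappa,\mu,D,n,r$, and at most $N$ eigenvalues of $\nabla^*\nabla$ lie in $[0,\mu]$. Together with the first paragraph this yields $\dim H^0(X,E)\le N$.

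The main technical obstacle I foresee is checking that the normalization in the Bochner--Kodaira identity makes the Hermitian-Einstein constant produced there coincide with the number $\mu$ defined in the statement via Chern classes. Beyond that, the only remaining routine point is translating the Chern connection on the Hermitian vector bundle into a $G$-connection on the real principal frame bundle so that the eigenvalue estimate of Theorem \ref{main 1-1}, stated for real unitary representations, applies verbatim.
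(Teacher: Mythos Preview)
Your proposal is correct and follows essentially the same route as the paper: derive the Bochner--Kodaira identity $\nabla^*\nabla = 2\Delta_{\bar\partial} + \Lambda_\omega F^\nabla$, use the Hermitian--Einstein condition to conclude that holomorphic sections are $\mu$-eigensections of $\nabla^*\nabla$, observe that Hermitian--Einstein implies Yang--Mills so that $(d^{\nabla})^*F^\nabla=0$, and then invoke Theorem~\ref{main 1-1} to bound the number of eigenvalues in $[0,\mu]$. The two concerns you flag (matching the normalization of $\mu$ and passing to the real frame bundle with $G=U(r)$) are indeed only routine bookkeeping, and the paper treats them at the same level of informality.
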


Theorem \ref{main 1-1} is proved as follows. 
To discuss with the connection Laplacian $\nabla^*\nabla$, 
we need the relation between 
\begin{align*}
\nabla^*\nabla\colon \Gamma(E_\rho)\to\Gamma(E_\rho)
\end{align*}
and 
the Laplacian $\Delta$ of $h$ on $P$. 
In Section \ref{G-str} we review the relation 
between the sections of $E_\rho$ and 
$G$-equivariant $V$-valued smooth functions on $P$. 
Next we construct the Riemannian metric $h$ on 
$P$ from $(X,g),A$ then show that 
$\nabla^*\nabla$ is related to the Laplacian of 
$h$ along \cite{Kasue2011}\cite{Lott-Dirac2002} in Sections \ref{principal metric} and 
\ref{eigen decomp}.
In Section \ref{curvature} we compute the Ricci curvature 
of $h$, and we will see that the boundedness of 
$\| F^A \|_{L^\infty}$ and 
$\| (d^{\nabla^A})^* F^A \|_{L^\infty}$ in the assumption of 
Theorem \ref{main 1-1} is necessary to give the lower bound of 
the Ricci curvature of $h$. 
In Section \ref{G-str on metric space}, 
we introduce the notion of the asymptotically $G$-equivariant measured 
Gromov-Hausdorff convergences for the sequences 
of compact metric measure spaces with isometric $G$-actions, 
then show that if the sequence is precompact 
with respect to the measured 
Gromov-Hausdorff topology, then it is also precompact 
with respect to the asymptotically $G$-equivariant measured 
Gromov-Hausdorff topology. 
In Sections \ref{sec conv} and \ref{sec spec}, we review the results for 
the convergence of spectral structures 
on some metric measure spaces
along \cite{Cheeger-Colding3}\cite{KuwaeShioya2003} 
and apply them to our situation, 
then we see that these arguments are compatible with 
the $G$-actions. 
As a consequence, we obtain 
Theorem \ref{main 1-1} in Section \ref{sec main}. 
In Section \ref{holomorphic}, 
we apply Theorem \ref{main 1-1} 
to the case of holomorphic vector bundles 
on compact K\"ahler manifolds then obtain 
Theorem \ref{main 1-2}.

\paragraph{Acknowledgments.}
The author would like to thank 
Professor Shouhei Honda for 
useful conversations and telling 
him the related works. 
He would also like to thank 
Professor Atsushi Kasue for his comments and advices. 
He would like to thank Daisuke Kazukawa 
for his comments.

\section{Connections on principal bundles}\label{G-str}
Let $X$ be a smooth manifold and $\pi\colon 
P\to X$ 
be a principal $G$-bundle. 
A $G$-connection on $P$ is a differential form 
$A\in\Omega^1(P)\otimes\geh$ satisfying 
\begin{align*}
A_u(\xi^\sharp_u) = \xi,\quad R_\gamma^*A = {\rm Ad}_{\gamma^{-1}}A
\end{align*}
for all $u\in P$, $\xi\in\geh$ and $\gamma\in G$. 
Here, $\xi^\sharp\in\mathcal{X}(P)$ is the vector 
field generated by $\xi\in\geh$, defined by 
\begin{align*}
\xi^\sharp_u:=\left.\frac{d}{dt}\right|_{t=0}u\exp(t\xi).
\end{align*}
The $G$-connection 
determines the horizontal distribution 
$H=\{ H_u\}_{u\in P}$ by $H_u:={\rm Ker}\,A_u
\subset T_uP$. 
The curvature form $F^A\in\Omega^2(P)
\otimes \geh$ is defined by 
\begin{align*}
F^A := dA|_{H} = dA + \frac{1}{2}[A\wedge A].
\end{align*}

\subsection{Local trivialization}
Let $(U,x^1,\ldots,x^n)$ be a 
local coordinate on 
$X$ and we suppose that $P|_U = \pi^{-1}(U)=U\times G$. 
Let $\hat{v}_u\in H_u$ be the horizontal lift of 
$v_x\in T_xX$. 
Now we have $T_{(x,\gamma)}P = T_xX\oplus T_\gamma G$, 
and let $e_1,\ldots,e_k\in \geh$ be a basis. 
Then $\{ \partial_i = \frac{\partial}{\partial x^i}, 
e_\alpha^\sharp\}$ becomes 
a basis of $T_{(x,\gamma)}P$, where $(e_\alpha^\sharp)_\gamma=(L_\gamma)_* e_\alpha$. 
Let $A_i^\alpha\colon U\times G\to \R$ be 
defined by 
\begin{align*}
A_i^\alpha(x,\gamma) e_\alpha := A_{x,\gamma}((\partial_i)_x).
\end{align*}
Then we have 
\begin{align*}
A_i^\alpha(x,\gamma)e_\alpha &= {\rm Ad}_{\gamma^{-1}} \{ A_i^\alpha(x,1)e_\alpha\},\\
(\hat{\partial}_i)_{(x,\gamma)} 
&= (\partial_i)_x - A_i^\alpha (x,\gamma) (e_\alpha^\sharp)_\gamma.
\end{align*}
Now, we fix a real unitary representation 
$\rho:G\to O(V)$ for a vector space 
$V$ with inner product, and have 
\begin{align*}
\Omega_B^k(P,\rho,V) 
= \{ \tau\in \Omega^k(P)\otimes V;\, R_\gamma^*\tau=\rho(\gamma^{-1})\tau,\  
\iota_{\xi^\sharp}\tau = 0\}.
\end{align*}
For the associate vector bundle 
$E_\rho:=P\times_\rho V$, 
we have the natural identification 
\begin{align*}
\Omega^k(X,E_\rho) \cong \Omega_B^k(P,\rho,V).
\end{align*}
In particular, the correspondence between 
$\tau\in\Gamma(X,E_\rho)$ and 
$\hat{\tau}\in\Omega_B^0(P,\rho,V)$ is 
given by 
\begin{align*}
\tau(x) = u\times_\rho \hat{\tau}(u)\quad 
(u\in\pi^{-1}(x)).
\end{align*}
Denote by $\nabla^A$ the covariant derivative 
on $E_\rho$, then $\nabla \tau$ corresponds 
to $d\hat{\tau}|_H$ under the identification 
$\Omega^1(X,E_\rho) \cong
\Omega_B^1(P,\rho,V)$. 
Under the direct decomposition 
$T_uP = H_u \oplus {\rm Ker}\,d\pi_u$, 
we have 
\begin{align*}
d\hat{\tau} = d\hat{\tau}|_H + d\hat{\tau}|_{{\rm Ker}\,d\pi}.
\end{align*}
For $\xi\in\geh$ we have 
\begin{align*}
d\hat{\tau}_u(\xi^\sharp_u)
&= \left.\frac{d}{dt}\right|_{t=0}
\hat{\tau}(u\exp(t\xi))\\
&= \left.\frac{d}{dt}\right|_{t=0}
\rho(\exp(-t\xi))\hat{\tau}(u)
= -\rho_*(\xi)\hat{\tau}(u),
\end{align*}
hence we obtain 
\begin{align}
d\hat{\tau} = d\hat{\tau}|_H - \rho_*\circ A(\cdot)
\hat{\tau},\label{ortho decomp}
\end{align}

\section{Riemannian metric on $P$}\label{principal metric}
In this section we describe the 
relation between the rough Laplacian on the 
associate bundle $E_\rho$ and the Laplacian on $P$ 
for the certain metric along \cite{Kasue2011}\cite{Lott-Dirac2002}.

Fix an ${\rm Ad}_G$-invariant metric $\sigma$ on 
$\mathfrak{g}$ and define a Riemannian metric 
\begin{align}
h = h(g,A,\sigma)\label{def principal metric}
\end{align}
on $P$ by 
\begin{align*}
h_{ij} &:= h(\hat{\partial_i},\hat{\partial_j}) = 
g(\partial_i,\partial_j)
=:g_{ij},\\
h_{i\alpha} &= h(\hat{\partial_i}, e_\alpha^\sharp) = 0,\\
h_{\alpha\beta} &:= h(e_\alpha^\sharp, e_\beta^\sharp) 
= \sigma(e_\alpha,e_\beta)
=: \sigma_{\alpha\beta}.
\end{align*}
By the decomposition \eqref{ortho decomp}, 
we have
\begin{align*}
h^{-1}(d\hat{\tau}_0,d\hat{\tau}_1)
&= h^{-1}(d\hat{\tau}_0|_H,d\hat{\tau}_1|_H)
+ \sigma^{\alpha\beta} \langle 
\rho_*(e_\alpha)\hat{\tau}_0, 
\rho_*(e_\beta)\hat{\tau}_1
\rangle_V\\
&= g^{ij}\langle\nabla_{\partial_i}^A \tau_0, \nabla_{\partial_j}^A \tau_1\rangle_{E_\rho} 
- \langle \sigma^{\alpha\beta} 
\rho_*(e_\beta)\rho_*(e_\alpha)\hat{\tau}_0, 
\hat{\tau}_1
\rangle_V.
\end{align*}
Therefore 
by integrating on $P$ we have 
\begin{align*}
\int_P d^*d\hat{\tau}_0\cdot\hat{\tau}_1\, d{\rm vol}_h
&= 
\int_P \langle (\nabla^A)^*\nabla^A \tau_0, \tau_1\rangle_{E_\rho} d{\rm vol}_h \\
&\quad\quad 
- \int_P \langle \sigma^{\alpha\beta} 
\rho_*(e_\beta)\rho_*(e_\alpha)\hat{\tau}_0, 
\hat{\tau}_1
\rangle_V d{\rm vol}_h
\end{align*}
for any $\tau_0,\tau_1\in\Gamma(X,E_\rho)$, 
which gives 
\begin{align*}
\Delta^h \hat{\tau}
= \widehat{\Delta^A \tau}
- \sigma^{\alpha\beta} 
\rho_*(e_\beta)\rho_*(e_\alpha)\hat{\tau},
\end{align*}
where $\Delta^h=d^*d$ is the 
Laplacian of $h$ acting on the functions and 
$\Delta^A= (\nabla^A)^*\nabla^A$ 
is the rough Laplacian acting on the 
sections of $E_\rho$.

Here, 
\begin{align*}
\sigma^{\alpha\beta}\rho_*(e_\alpha) \rho_*(e_\beta) \colon V\to V
\end{align*}
is a $G$-equivariant map 
whose eigenvalues are real and nonpositive.
Consequently, 
if $(\rho,V)$ is an irreducible representation, 
then by the following lemma we may write 
\begin{align*}
\sigma^{\alpha\beta}\rho_*(e_\alpha) \rho_*(e_\beta) 
= -\chi_{\sigma,\rho}
\end{align*}
for some nonnegative number $\chi_{\sigma,\rho}$, 
called Casimir invariant, determined by 
$(\rho,V)$ and $\sigma$.

\begin{lem}
Let $(\rho,V)$ be an irreducible 
real $G$-representation and 
$\Phi\colon V\to V$ be a $G$-equivariant 
linear map which has at least one real eigenvalue. 
Then there is $a\in\R$ such that 
$\Phi = a\cdot {\rm id}_V$. 
\end{lem}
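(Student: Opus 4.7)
The plan is to run the standard Schur-lemma argument, using the hypothesis of a real eigenvalue to rule out the non-scalar commutants that can occur over $\R$.

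More precisely, I would first pick $a\in\R$ to be a real eigenvalue of $\Phi$ and form the shifted map
\begin{align*}
\Psi := \Phi - a\cdot{\rm id}_V \colon V \to V.
\end{align*}
Since $\Phi$ is $G$-equivariant and $a\cdot{\rm id}_V$ is trivially so, $\Psi$ is again $G$-equivariant. The kernel $\ker\Psi$ is the $a$-eigenspace of $\Phi$, hence nonzero by the choice of $a$. On the other hand, $G$-equivariance of $\Psi$ implies that $\ker\Psi\subset V$ is a $G$-invariant subspace.

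The irreducibility of $(\rho,V)$ then forces $\ker\Psi=V$, so $\Psi=0$ and $\Phi=a\cdot{\rm id}_V$, as desired.

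There is no serious obstacle: the only subtle point is recognizing why the real-eigenvalue hypothesis is needed, namely that over $\R$ an irreducible representation may have endomorphism algebra equal to $\R$, $\C$, or $\mathbb{H}$, so a $G$-equivariant $\Phi$ need not be a real scalar in general (consider, e.g., rotation on $\R^2$ viewed as the standard $U(1)$-representation, whose commutant is $\C$ and which admits non-scalar equivariant endomorphisms with no real eigenvalues). The assumption that $\Phi$ has at least one real eigenvalue is exactly what is needed to make the shift $\Phi - a\cdot{\rm id}_V$ degenerate and thereby trigger irreducibility.
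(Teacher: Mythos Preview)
Your argument is correct and is essentially identical to the paper's: the paper takes a real eigenvalue $a$, observes that the eigenspace $V(a)=\ker(\Phi-a\cdot{\rm id}_V)$ is a nonzero subrepresentation, and concludes $V(a)=V$ by irreducibility. Your additional remark on why the real-eigenvalue hypothesis is essential (the endomorphism algebra of a real irreducible can be $\R$, $\C$, or $\mathbb{H}$) is a helpful clarification not present in the paper.
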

\begin{proof}
Let $a\in\R$ be an eigenvalue 
of $\Phi$ and 
$V(a)\subset V$ be the eigenspace associate with  
$a\in\R$. 
Since $V(a)$ is a subrepresentation of $V$, 
$V(a)=V$ holds since $V$ is irreducible. 
\end{proof}

\section{Eigenspaces}\label{eigen decomp}
Let $X,P,G,E_\rho,\rho,V,h,A$ be as above. 
For $\lambda\in \R$ put 
\begin{align*}
W^{E_\rho}(\lambda) &:= \{ s\in\Gamma(E_\rho);\, \Delta^A s = \lambda s\},\\
W^h(\lambda) &:= \{ f\in C^\infty(P);\, \Delta^h f = \lambda f\},
\end{align*}
then we can see 
\begin{align*}
(W^h(\lambda)\otimes V)^G
&= \{ f\in C_B^\infty(P,\rho,V);\, \Delta^h f = \lambda f\}.
\end{align*}
By the previous section we obtain an isomorphism 
\[ 
\left.
\begin{array}{ccc}
(W^h(\lambda)\otimes V)^G & \stackrel{\cong}{\longrightarrow} & 
W^{E_\rho}(\lambda - \chi_{\sigma,\rho}) \\
\rotatebox{90}{$\in$} & & \rotatebox{90}{$\in$} \\
\hat{\tau} & \longmapsto & u\times_\rho \hat{\tau}.
\end{array}
\right.
\]

\section{Curvature}\label{curvature}
In this section we compute the curvature of 
$h(g,A,\sigma)$. 
Define $F_{ij}^\alpha\in C^\infty(P|_U)$ by 
\begin{align*}
F_{ij}^\alpha e_\alpha := F^A(\hat{\partial}_i,\hat{\partial}_j)\in\geh,  
\end{align*}
and let $\Gamma_{ij}^k$ be the Christoffel 
symbols of $g$. 
By Section \ref{principal metric}, 
we have 
\begin{align*}
[\hat{\partial}_i, \hat{\partial}_j]
&= -F_{ij}^\alpha e_\alpha^\sharp, \quad
[\hat{\partial}_i, e_\beta^\sharp]
=0, \quad
[e_\alpha^\sharp, e_\beta^\sharp] 
= [e_\alpha, e_\beta]^\sharp,\\
\nabla_{\hat{\partial}_i} \hat{\partial}_j 
&= \Gamma_{ij}^k \hat{\partial}_k 
-\frac{1}{2} F_{ij}^\alpha e_\alpha^\sharp,\quad 
\nabla_{e_\alpha^\sharp} e_\beta^\sharp
= \frac{1}{2}[e_\alpha, e_\beta]^\sharp,\\
\nabla_{\hat{\partial}_i} e_\beta^\sharp
&= \nabla_{e_\beta^\sharp} \hat{\partial}_i
= \frac{g^{kh}\sigma(F_{ih}, e_\beta) }{2}
\hat{\partial}_k
= \frac{g^{kh}F_{ih}^\alpha \sigma_{\alpha\beta} }{2}
\hat{\partial}_k, 
\end{align*}
where $F_{ij} = F_{ij}^\alpha e_\alpha$. 
Moreover the 2nd Bianchi identity yields 
\begin{align*}
0 
= dF^A(\hat{\partial}_i, \hat{\partial}_j, \hat{\partial}_k)
= \hat{\partial}_i(F_{jk}) - \hat{\partial}_j(F_{ik}) +  \hat{\partial}_k(F_{ij}).
\end{align*}
Now we denote by $\hat{R}$ the 
curvature tensor of $h$, and by $R$ that of $g$. 
Then we have
\begin{align*}
\hat{R}(\hat{\partial}_i, \hat{\partial}_j) \hat{\partial}_k
&= R_{ijk}^l \hat{\partial}_l
+\frac{\hat{\partial}_k(F_{ij}^\alpha) -F_{il}^\alpha\Gamma_{jk}^l
- F_{lj}^\alpha\Gamma_{ik}^l}{2}e_\alpha^\sharp \\
&\quad\quad +
\frac{g^{lh}\sigma_{\alpha\beta}}{4}
(2F_{ij}^\alpha F_{kh}^\beta
- F_{jk}^\alpha F_{ih}^\beta
- F_{ki}^\alpha F_{jh}^\beta)\hat{\partial}_l,\\
\hat{R}(\hat{\partial}_i, e_\alpha^\sharp) \hat{\partial}_j
&= \frac{g^{lh}\sigma_{\alpha\beta}}{2}
(\hat{\partial}_i(F_{jh}^\beta) -F_{jk}^\beta\Gamma_{ih}^k
- F_{kh}^\beta\Gamma_{ij}^k)\hat{\partial}_l\\
&\quad\quad 
-\frac{g^{kh}F_{jh}^\beta F_{ik}^\mu\sigma_{\alpha\beta}e_\mu^\sharp}{4}
+\frac{e_\alpha^\sharp(F_{ij}^\beta)e_\beta^\sharp}{2}
+\frac{F_{ij}^\beta[e_\alpha,e_\beta]^\sharp}{4},\\
\hat{R}(\hat{\partial}_i, e_\alpha^\sharp) 
e_\beta^\sharp
&= \frac{g^{kh}(F_{ih}^\mu\sigma(e_\mu,[e_\alpha,e_\beta]) - 2e_\alpha^\sharp(F_{ih}^\mu)\sigma_{\mu\beta})}{4}\hat{\partial}_k\\
&\quad\quad 
-\frac{g^{kh} F_{ih}^\mu\sigma_{\mu\beta}g^{lp}F_{kp}^\delta\sigma_{\delta\alpha}}{4}\hat{\partial}_l,\\
\hat{R}(e_\alpha^\sharp, e_\beta^\sharp) 
e_\mu^\sharp
&= \frac{[e_\mu,[e_\alpha,e_\beta]]^\sharp}{4}.
\end{align*}
Here, we have 
\begin{align*}
e_\alpha^\sharp(F_{ij}^\beta)e_\beta
= -F_{ij}^\beta[e_\alpha,e_\beta],
\quad 
e_\alpha^\sharp(F_{ij}^\beta)e_\beta^\sharp
= -F_{ij}^\beta[e_\alpha,e_\beta]^\sharp.
\end{align*}
By putting 
\begin{align*}
(\nabla F)_{kij}^\alpha
= \hat{\partial}_k(F_{ij}^\alpha) -F_{il}^\alpha\Gamma_{jk}^l
- F_{lj}^\alpha\Gamma_{ik}^l,
\end{align*}
The 2nd Bianchi identity implies 
\begin{align*}
(\nabla F)_{ijk}^\alpha + (\nabla F)_{jki}^\alpha 
+ (\nabla F)_{kij}^\alpha = 0,
\end{align*}
therefore we obtain 
\begin{align*}
\hat{R}(\hat{\partial}_i, \hat{\partial}_j) \hat{\partial}_k
&= R_{ijk}^l \hat{\partial}_l
+\frac{(\nabla F)_{kij}^\alpha}{2}e_\alpha^\sharp \\
&\quad\quad +
\frac{g^{lh}\sigma_{\alpha\beta}}{4}
(2F_{ij}^\alpha F_{kh}^\beta
- F_{jk}^\alpha F_{ih}^\beta
- F_{ki}^\alpha F_{jh}^\beta)\hat{\partial}_l,\\
\hat{R}(\hat{\partial}_i, e_\alpha^\sharp) \hat{\partial}_j
&= \frac{g^{lh}\sigma_{\alpha\beta}
(\nabla F)_{ijh}^\beta}{2}\hat{\partial}_l
-\frac{g^{kh}F_{jh}^\beta F_{ik}^\mu\sigma_{\alpha\beta}e_\mu^\sharp}{4}
\\
&\quad\quad -\frac{F_{ij}^\beta[e_\alpha,e_\beta]^\sharp}{4},\\
\hat{R}(\hat{\partial}_i, \hat{\partial}_j) e_\alpha^\sharp
&= -\frac{g^{lh}\sigma_{\alpha\beta}
(\nabla F)_{hij}^\beta}{2}\hat{\partial}_l
-\frac{g^{kh}(F_{jh}^\beta F_{ik}^\mu 
- F_{ih}^\beta F_{jk}^\mu)\sigma_{\alpha\beta}e_\mu^\sharp}{4}\\
&\quad\quad 
-\frac{F_{ij}^\beta[e_\alpha,e_\beta]^\sharp}{2},\\
\hat{R}(\hat{\partial}_i, e_\alpha^\sharp) 
e_\beta^\sharp
&= -\frac{g^{kh} F_{ih}^\mu\sigma(e_\mu,[e_\alpha,e_\beta])}{4}\hat{\partial}_k
-\frac{g^{kh} F_{ih}^\mu\sigma_{\mu\beta}g^{lp}F_{kp}^\delta\sigma_{\delta\alpha}}{4}\hat{\partial}_l,\\
\hat{R}(e_\alpha^\sharp, e_\beta^\sharp) 
\hat{\partial}_i
&= \frac{g^{kh} F_{ih}^\mu\sigma(e_\mu,[e_\alpha,e_\beta])}{2}\hat{\partial}_k\\
&\quad\quad 
+ \frac{g^{kh} F_{ih}^\mu g^{lp}F_{kp}^\delta
(\sigma_{\mu\beta}\sigma_{\delta\alpha}
- \sigma_{\mu\alpha}\sigma_{\delta\beta})}{4}\hat{\partial}_l,\\
\hat{R}(e_\alpha^\sharp, e_\beta^\sharp) 
e_\mu^\sharp
&= \frac{[e_\mu,[e_\alpha,e_\beta]]^\sharp}{4}.
\end{align*}
Hence the Ricci curvature of $h$ is given by 
\begin{align*}
\hat{{\rm Ric}}(\hat{\partial}_j,\hat{\partial}_k)
&= {\rm Ric}_{jk} 
-\frac{g^{ih}\sigma_{\alpha\beta}F_{ki}^\alpha 
F_{jh}^\beta}{2},\\
\hat{{\rm Ric}}(\hat{\partial}_j,e_\beta^\sharp)
&= -\frac{g^{ih}\sigma_{\beta\mu}
(\nabla F)_{hij}^\mu}{2},\\
\hat{{\rm Ric}}(e_\beta^\sharp, e_\mu^\sharp)
&= 
\frac{g^{kh} F_{hi}^\alpha\sigma_{\alpha\mu}g^{ip}F_{kp}^\delta\sigma_{\delta\beta}}{4}
+ \frac{\sigma^{\alpha\delta}
\sigma([e_\alpha,e_\beta],[e_\delta,e_\mu])}{4}.
\end{align*}
Now, define $F^*F\in\Gamma({\rm Symm}_2(H^*))\otimes 
{\rm Symm}_2(\mathfrak{g})$ by 
\begin{align*}
F^*F = g^{kl}F_{ik}^\alpha F_{jl}^\beta \hat{\partial}^i
\otimes \hat{\partial}^j \otimes e_\alpha 
\otimes e_\beta
\end{align*}
where $H^* \to P$ is the dual of the horizontal distribution 
$H\to P$ and $\{ \hat{\partial}^i\}_i$ is the dual basis of 
$\{ \hat{\partial}_i\}_i$. 
Note that $\{ (d^\nabla)^* F\}_j^\mu = -g^{ih} (\nabla F)_{hij}^\mu$. 
Then we have 
\begin{align*}
\hat{{\rm Ric}}(\hat{\partial}_j,\hat{\partial}_k)
&= {\rm Ric}_{jk} 
-\frac{(F^*F)_{jk}^{\alpha\beta}\sigma_{\alpha\beta}}{2},\\
\hat{{\rm Ric}}(\hat{\partial}_j,e_\beta^\sharp)
&= \frac{ \{ (d^\nabla)^* F\}_j^\mu\sigma_{\beta\mu}}{2},\\
\hat{{\rm Ric}}(e_\beta^\sharp, e_\mu^\sharp)
&= 
\frac{g^{jk} (F^*F)_{jk}^{\alpha\delta}\sigma_{\alpha\beta}
\sigma_{\delta\mu}}{4}
+ \frac{\sigma^{\alpha\delta}
\sigma([e_\alpha,e_\beta],[e_\delta,e_\mu])}{4}.
\end{align*}

\section{$G$-structures on metric spaces}\label{G-str on metric space}
In Section \ref{principal metric}, we have shown that 
$\pi\colon (P,h)\to (X,g)$ is a Riemannian submersion and 
every $G$-orbit is totally geodesically embedded in $P$ and isometric to 
$(G,\sigma)$. 
Conversely, if $(P,h)$ is a Riemannian manifold with isometric free $G$-action 
for a compact Lie group $G$ and every $G$-orbit is isometric to 
$(G,\sigma)$, then $X=P/G$ is a smooth manifold with a Riemannian metric 
$g$ such that $\pi\colon (P,h)\to (X,g)$ is a Riemannian submersion, 
and the horizontal distribution of $P$ defines a $G$-connection. 
We generalize this picture to the metric spaces.

In this article, $G$-actions on the metric spaces 
always mean 
the right actions, and the maps 
\[ 
\left.
\begin{array}{ccc}
P\times G & \longrightarrow & P \\
\rotatebox{90}{$\in$} & & \rotatebox{90}{$\in$} \\
(u,\gamma) & \longmapsto & u\gamma
\end{array}
\right.
\]
are always supposed to be continuous. 
We denote by $\bar{u}\in P/G$ the equivalence 
class represented by $u\in P$.

\begin{prop}
Let $G$ be a compact topological group, 
and $(P,d)$ be a metric space with isometric continuous 
right $G$-action. 
Then the quotient map $\pi\colon (P,d) \to (P/G,\bar{d})$ is a submetry, 
where $\bar{d}$ is a distance function on $P$ defined by 
$\bar{d}(\bar{u}_0,\bar{u}_1):=\inf_{g\in G}d(u_0,u_1g)$. 
\label{quotient metric}
\end{prop}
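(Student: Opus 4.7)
The plan is to first check that $\bar{d}$ is indeed a metric on $P/G$, and then verify the defining submetry condition $\pi(B_r(u)) = B_r(\bar u)$ for every $u\in P$ and $r>0$. The key auxiliary fact I would use repeatedly is that the infimum in the definition of $\bar{d}$ is attained: for fixed $u_0,u_1\in P$ the map $\gamma\mapsto d(u_0,u_1\gamma)$ is continuous because the right action is continuous and $d$ is continuous, so by compactness of $G$ it achieves its minimum.

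First, I would verify that $\bar{d}(\bar{u}_0,\bar{u}_1)$ is independent of the chosen representatives. Since the action is isometric, substituting $u_i$ by $u_i\gamma_i$ simply reparametrizes the set $\{d(u_0,u_1\gamma)\}_{\gamma\in G}$ by a left/right translation on $G$, leaving the infimum unchanged. For the metric axioms: symmetry follows by combining $d(u_0,u_1\gamma)=d(u_1\gamma,u_0)$ with the substitution $\gamma\mapsto\gamma^{-1}$ (noting $d(u_1,u_0\gamma^{-1})=d(u_1\gamma,u_0)$ because the action is isometric). The triangle inequality is obtained by choosing minimizers $\gamma_1,\gamma_2$ for the two legs and using $d(u_0,u_2\gamma_1\gamma_2)\le d(u_0,u_1\gamma_1)+d(u_1\gamma_1,u_2\gamma_1\gamma_2)=d(u_0,u_1\gamma_1)+d(u_1,u_2\gamma_2)$, where the last equality is again the isometry property. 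Positive definiteness is the one place where compactness is crucial: if $\bar{d}(\bar{u}_0,\bar{u}_1)=0$, then the attained infimum gives some $\gamma\in G$ with $d(u_0,u_1\gamma)=0$, hence $u_0=u_1\gamma$, so $\bar{u}_0=\bar{u}_1$.

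Next I would prove the submetry condition. The inclusion $\pi(B_r(u))\subset B_r(\bar{u})$ is immediate from $\bar{d}(\pi(u'),\pi(u))\le d(u',u)$, which is just taking $\gamma=e$ in the infimum. Conversely, given $\bar{v}\in B_r(\bar{u})$, attainment of the infimum provides $\gamma\in G$ with $d(u,v\gamma)=\bar{d}(\bar{u},\bar{v})<r$; then $v\gamma\in B_r(u)$ and $\pi(v\gamma)=\bar{v}$, giving the reverse inclusion.

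The main obstacle, such as it is, is ensuring that every step that looks like an infimum is actually a minimum, since otherwise positive definiteness and the reverse inclusion in the submetry condition could both fail; but the hypothesis that $G$ is compact and acts continuously handles this uniformly. No further structure on $P$ beyond being a metric space is needed.
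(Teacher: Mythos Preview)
Your argument is correct and follows essentially the same approach as the paper: both prove the submetry condition by showing the easy inclusion $\pi(B_r(u))\subset B_r(\bar u)$ via $\bar d\le d$, and the reverse inclusion by using compactness of $G$ to realize the infimum. The only differences are cosmetic: the paper works with closed balls rather than open ones, and it omits the verification that $\bar d$ is actually a metric on $P/G$, which you supply in full.
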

\begin{proof}
Let $D(u_0,r):=\{ u_1\in P;\, d(u_1,u)\le r\}$. 
We show $\pi(D(u_0,r)) = D(\bar{u}_0,r)$ for 
all $u_0\in P$ and $r>0$. 
Let $u_1\in D(u_0,r)$. Then $\bar{d}(\bar{u}_0,\bar{u}_1)\le d(u_0,u_1) \le r$, hence we have 
$\pi(u_1)=\bar{u}_1\in D(\bar{u}_0,r)$. 
Conversely, if $\bar{u}_1\in D(\bar{u}_0,r)$, then there exists $g\in G$ such that 
$d(u_0,u_1g)=\bar{d}(\bar{u}_0,\bar{u}_1)\le r$, consequently 
$\bar{u}_1\in\pi(D(u_0,r))$. 
\end{proof}

For a topological space $X$, we denote by 
$C_0(X)$ the set consisting of all continuous functions on $X$ 
with compact support. 

Throughout of this paper 
$(P,d,\nu)$ is called 
{\it metric measure space} if 
$(P,d)$ is a metric space and 
$\nu$ is a Borel Radon measure such that 
$\nu(B(p,r))$ is finite for any $p\in P$ and $r>0$.

\begin{definition}
\normalfont

\mbox{}
\begin{itemize}
\setlength{\parskip}{0cm}
\setlength{\itemsep}{0cm}
 \item[(1)] Let $(P',d')$ and $(P,d)$ be metric spaces. 
A map $\phi:P'\to P$ is an {\it $\varepsilon$-isometry} if 
$(i)$ $|d'(u_0,u_1)-d(\phi(u_0),\phi(u_1))|<\varepsilon$ for any $u_0,u_1\in P'$, 
$(ii)$ $P\subset B(\phi(P'),\varepsilon)$. 
 \item[(2)] Let $\{(P_i,d_i,\nu_i)\}_i$ be a sequence of metric measure spaces. 
A metric measure space 
$(P_\infty,d_\infty,\nu_\infty)$ is 
{\it the measured  Gromov-Hausdorff limit of 
$\{(P_i,d_i,\nu_i)\}_i$} 
if there are positive numbers 
$\{ \varepsilon_i\}_i$ with 
$\lim_{i\to \infty}\varepsilon_i = 0$ and Borel $\varepsilon_i$-isometries 
$\phi_i\colon P_i\to P_\infty$ for every $i$ 
such that ${\phi_i}_*\nu_i\to\nu_\infty$ vaguely as $i\to\infty$, that is, 
\begin{align*}
\lim_{i\to\infty} \int_{P_\infty}f d{\phi_i}_*\nu_i = \int_{P_\infty}f d\nu_\infty
\end{align*}
holds for any $f\in C_0(P_\infty)$.
 \item[(3)] Let $\{(P_i,d_i,\nu_i,p_i)\}_i$ be a sequence of pointed 
metric measure spaces. 
A pointed metric measure space 
$(P_\infty,d_\infty,\nu_\infty,p_\infty)$ is 
{\it the pointed measured Gromov-Hausdorff limit of 
$\{(P_i,d_i,\nu_i,p_i)\}_i$} 
if for any $R>0$ there are positive numbers 
$\{ \varepsilon_i\}_i$ and $\{ R_i\}_i$ with 
$\lim_{i\to \infty}\varepsilon_i = 0$, $\lim_{i\to \infty}R_i=R$ 
and Borel $\varepsilon_i$-isometries 
\begin{align*}
\phi_i\colon (B(p_i,R_i),p_i)\to (B(p_\infty,R),p_\infty)
\end{align*}
for every $i$ 
such that ${\phi_i}_*(\nu_i|_{B(p_i,R_i)}) \to \nu_\infty|_{B(p_\infty,R)}$ 
vaguely.
\end{itemize}
\label{def mGH}
\end{definition}

The following notion 
$(1)$ is the special case of 
\cite[Definition 4.1]{FuakayaYamaguchi1994}. 
\begin{definition}
\normalfont 

Let $G$ be a compact topological group. 
\begin{itemize}
\setlength{\parskip}{0cm}
\setlength{\itemsep}{0cm}
 \item[(1)] 
Let $(P',d')$ and $(P,d)$ be metric spaces with 
isometric $G$-action. 
A map $\phi:P'\to P$ is an {\it $\varepsilon$-$G$-equivariant 
Hausdorff approximation} if $\phi$ is an $\varepsilon$-isometry 
satisfying $d(\phi(u'\gamma),\phi(u')\gamma) < \varepsilon$ 
for any $u'\in P'$ and $\gamma\in G$. 
Moreover if $\phi$ is a Borel map then 
it is called a {\it Borel $\varepsilon$-$G$-equivariant 
Hausdorff approximation}. 
 \item[(2)] Let $\{(P_i,d_i,\nu_i)\}_i$ be a sequence of 
metric measure spaces with isometric 
$G$-action. 
A metric measure space 
$(P_\infty,d_\infty,\nu_\infty)$ with isometric $G$-action is called 
{\it the asymptotically $G$-equivariant measured Gromov-Hausdorff limit of 
$\{(P_i,d_i,\nu_i)\}_i$} 
if there is a Borel 
$\varepsilon_i$-$G$-equivariant 
Hausdorff approximation 
$\phi_i\colon P_i\to P_\infty$ for each $i$ 
such that $\lim_{i\to \infty}\varepsilon_i = 0$ and 
${\phi_i}_*\nu_i\to\nu_\infty$ vaguely as 
$i\to\infty$.
 \item[(3)] Let $\{(P_i,d_i,\nu_i,p_i)\}_i$ be a sequence of pointed 
metric measure spaces with isometric $G$-action. 
$(P_\infty,d_\infty,\nu_\infty,p_\infty)$ is said to be 
{\it the pointed asymptotically $G$-equivariant measured Gromov-Hausdorff limit of 
$\{(P_i,d_i,\nu_i,p_i)\}_i$} 
if $G$ acts on $P_\infty$ isometrically and 
for any $R>0$ there are positive numbers 
$\{ \varepsilon_i\}_i$, $\{ R_i\}_i$ with 
\begin{align*}
\lim_{i\to \infty}\varepsilon_i = 0,\quad \lim_{i\to \infty}R_i=R,
\end{align*}
and Borel $\varepsilon_i$-$G$-equivariant 
Hausdorff approximation 
\begin{align*}
\phi_i\colon (\pi_i^{-1}(B(x_i,R_i)),p_i)\to (\pi_\infty^{-1}(B(x_\infty,R)),p_\infty)
\end{align*}
for every $i$ 
such that ${\phi_i}_*(\nu_i|_{\pi_i^{-1}(B(x_i,R_i))} )
\to \nu_\infty|_{\pi_\infty^{-1}(B(x_\infty,R))}$ 
vaguely. 
Here, $\pi\colon P_i\to P_i/G$ is the quotient map and 
$x_i=\pi_i(p_i)$. 
\end{itemize}
\label{def GmGH}
\end{definition}

\begin{rem}
\normalfont
In Definition \ref{def GmGH}, 
$\phi_i\colon P_i\to P_\infty$ are not required 
to be $G$-equivariant. 
If all of $\phi_i$ are $G$-equivariant, 
then we obtain another topology 
which is already introduced by Lott 
in \cite{Lott-Dirac2002}\cite{Lott-form2002}. 
\end{rem}

We denote by 
\begin{align*}
(P_i,d_i,\nu_i) \mGH (P_\infty, d_\infty, \nu_\infty) 
\quad ({\rm or}\ P_i \mGH P_\infty)
\end{align*}
the pair of a sequence of metric measure spaces $\{(P_i,d_i,\nu_i)\}_i$ 
and its measured Gromov-Hausdorff limit $(P_\infty, d_\infty, \nu_\infty)$. 
Similarly, we write  
\begin{align*}
(P_i,d_i,\nu_i) \GmGH (P_\infty, d_\infty, \nu_\infty) 
\quad ({\rm or}\ P_i \GmGH P_\infty)
\end{align*}
if $G$ acts on 
$(P_i,d_i,\nu_i)$ and $(P_\infty, d_\infty, \nu_\infty)$, and 
$(P_\infty, d_\infty, \nu_\infty)$ is the asymptotically $G$-equivariant 
measured Gromov-Hausdorff limit of $\{ (P_i,d_i,\nu_i)\}_i$.

Similarly, 
\begin{align*}
(P_i,d_i,\nu_i,p_i) &\mGH (P_\infty, d_\infty, \nu_\infty,p_\infty),\\
(P_i,d_i,\nu_i,p_i) &\GmGH (P_\infty, d_\infty, \nu_\infty,p_\infty)
\end{align*}
mean the pointed measured Gromov-Hausdorff convergence 
and the pointed asymptotically $G$-equivariant 
measured Gromov-Hausdorff convergence, respectively.

\begin{rem}
\normalfont 
By \cite[Remark 2.2]{KuwaeShioya2003}, 
$(P_i,d_i,\nu_i,p_i) \mGH (P_\infty,d_\infty,\nu_\infty,p_\infty)$ 
iff there are positive numbers 
$\{ \varepsilon_i\}_i$, $\{ R_i\}_i$, $\{ R'_i\}_i$ 
and Borel $\varepsilon_i$-isometries 
$\phi_i\colon B(p_i,R'_i)\to B(p_\infty,R_i)$ for every $i$ 
such that $\lim_{i\to \infty}\varepsilon_i = 0$, 
$\lim_{i\to \infty}R_i=\lim_{i\to \infty}R'_i=\infty$ and 
\begin{align*}
\phi_i(p_i)=p_\infty,\quad 
\lim_{i\to\infty} \int_{B(p_\infty,R_i)}f d{\phi_i}_*\nu_i = \int_{B(p_\infty,R_i)}f d\nu_\infty
\end{align*}
hold for any $f\in C_0(P_\infty)$. 
\label{equiv pmGH}
\end{rem}

\begin{rem}
\normalfont 
If $(P_i,d_i,\nu_i,p_i) \GmGH (P_\infty,d_\infty,\nu_\infty,p_\infty)$, 
then 
the limit is the 
pointed measured compact Gromov Hausdorff limit 
in the sense of Definition 2.2 in \cite{KuwaeShioya2003}. 
\end{rem}

\begin{prop}
Let $(P,d)$ and $(P',d')$ be metric spaces with isometric 
$G$-action, 
and $\phi:P'\to P$ be an $\varepsilon$-$G$-equivariant 
Hausdorff approximation. 
Then there exists a $2\varepsilon$-isometry 
$\bar{\phi}\colon P'/G\to P/G$. 
Moreover, suppose that 
$\nu$ and $\nu'$ are Borel measures 
on $P$ and $P'$ respectively, 
$\phi$ is a Borel map, 
and there is a Borel section $s'\colon P'/G \to P'$, 
namely, Borel map $s'$ with $\pi'\circ s'={\rm id}_{P'/G}$. 
Then 
$\bar{\phi}$ is also a Borel map and the following holds. 
For any $f\in C_0(P/G)$ and $\varepsilon'>0$ 
there exists $\delta>0$ depending only on $\varepsilon'$ and $f$ 
such that if $\varepsilon\le\delta$ then 
\begin{align*}
\left| \int_X f\, d(\bar{\phi}_*\bar{\nu}') - \int_X f\, d\bar{\nu}\right|
&\le \nu' (P') \varepsilon' 
+ \left| \int_P f \circ\pi\, d\phi_*\nu' 
- \int_P f\circ\pi \, d\nu\right|
\end{align*}
holds, 
where $\bar{\nu} = \pi_*\nu$ and $\bar{\nu}' = \pi'_*\nu'$.
\label{G-eq conv and base conv}
\end{prop}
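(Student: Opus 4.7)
Three-step plan mirroring the three assertions. First, to produce a $2\varepsilon$-isometry $\bar{\phi}\colon P'/G\to P/G$, define $\bar{\phi}(\bar{u}'):=\overline{\phi(u')}$ for any chosen representative $u'$. For representatives $u'_0, u'_1$ of $\bar{u}'_0, \bar{u}'_1$ one has $\bar{d}(\bar{\phi}(\bar{u}'_0), \bar{\phi}(\bar{u}'_1)) = \inf_{\gamma\in G} d(\phi(u'_0), \phi(u'_1)\gamma)$. The approximate $G$-equivariance swaps $\phi(u'_1)\gamma$ for $\phi(u'_1\gamma)$ at cost $\varepsilon$, and the $\varepsilon$-isometry property compares the resulting $d(\phi(u'_0), \phi(u'_1\gamma))$ with $d'(u'_0, u'_1\gamma)$ at cost a further $\varepsilon$; taking infima over $\gamma$ delivers the $2\varepsilon$-estimate on distances. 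The $\varepsilon$-density of $\bar{\phi}$ is inherited from that of $\phi$ via the submetry $\pi$ from Proposition~\ref{quotient metric}.

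Second, when a Borel section $s'\colon P'/G\to P'$ is provided, replace the arbitrary choice of representatives by the canonical formula $\bar{\phi}:=\pi\circ\phi\circ s'$. This is Borel as a composition of Borel maps and remains a $2\varepsilon$-isometry by the same calculation, since different representatives of the same orbit produce images within $\varepsilon$ of each other in $P/G$.

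Third, for the integral inequality I would use that $f\in C_0(P/G)$ is uniformly continuous on the metric space $P/G$: given $\varepsilon'>0$ pick $\delta>0$ such that $\bar{d}(\bar{u}_0,\bar{u}_1)<\delta$ implies $|f(\bar{u}_0)-f(\bar{u}_1)|<\varepsilon'$. For each $u'\in P'$ write $s'(\pi'(u'))=u'\gamma_{u'}$; then approximate equivariance yields
\begin{equation*}
\bar{d}\bigl(\bar{\phi}(\pi'(u')),\pi(\phi(u'))\bigr) = \bar{d}\bigl(\overline{\phi(u'\gamma_{u'})},\overline{\phi(u')}\bigr) \le d(\phi(u'\gamma_{u'}), \phi(u')\gamma_{u'}) < \varepsilon.
\end{equation*}
So $\varepsilon\le\delta$ forces $|f\circ\bar{\phi}\circ\pi'(u')-f\circ\pi\circ\phi(u')|<\varepsilon'$ pointwise on $P'$. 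Rewriting the two pushforward integrals on $P'$ as $\int_{P/G}f\,d\bar{\phi}_*\bar{\nu}' = \int_{P'}f\circ\pi\circ\phi\circ s'\circ\pi'\,d\nu'$ and $\int_P f\circ\pi\,d\phi_*\nu' = \int_{P'}f\circ\pi\circ\phi\,d\nu'$, and inserting $\pm\int_{P'}f\circ\pi\circ\phi\,d\nu'$ into $|\int_X f\,d\bar{\phi}_*\bar{\nu}' - \int_X f\,d\bar{\nu}|$, the triangle inequality gives the desired bound, the first error term being $\le\nu'(P')\varepsilon'$ by the pointwise estimate above.

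The only genuine technical subtlety is Borel-measurability of $\bar{\phi}$: approximate equivariance makes $\bar{\phi}(\bar{u}')$ well-defined only up to $\varepsilon$ in $P/G$, so a measurable selection of representatives is required, and is provided precisely by the hypothesized Borel section $s'$. Once that selection is fixed, the remainder of the proof reduces to routine triangle-inequality bookkeeping together with uniform continuity of $f$ on its compact support.
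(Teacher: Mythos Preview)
Your proposal is correct and follows essentially the same route as the paper: define $\bar{\phi}=\pi\circ\phi\circ s'$, use approximate equivariance together with the $\varepsilon$-isometry property for the $2\varepsilon$-distance estimate, and combine uniform continuity of $f$ with the triangle inequality for the integral bound. Your pointwise estimate $\bar{d}\bigl(\bar{\phi}(\pi'(u')),\pi(\phi(u'))\bigr)<\varepsilon$ is in fact slightly sharper than the paper's (which obtains $2\varepsilon$ there by a two-step chain), but this only affects the choice of $\delta$ by a factor of $2$ and not the argument itself.
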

\begin{proof}
Put $X:=P/G$, $X':=P'/G$. 
First of all we define $\bar{\phi}\colon P'/G\to P/G$ 
such that 
\begin{align*}
|\bar{d}'(\bar{u}_0,\bar{u}_1) - \bar{d}(\bar{\phi}(\bar{u}_0),\bar{\phi}(\bar{u}_1)) |
< 2\varepsilon. 
\end{align*}
Fix $s'\colon P'/G \to P'$ with $\pi'\circ s'={\rm id}_{P'/G}$ 
and put 
\begin{align*}
\bar{\phi}(x) := \overline{\phi( s'(x) )}. 
\end{align*}
Note that $\bar{\phi}$ is a Borel map if so are $\phi$ and $s'$. 

For $x_0,x_1\in P'$, take $\gamma_0\in G$ such that 
\begin{align*}
d(\phi(s'(x_0)),\phi(s'(x_1))\gamma_0) 
= \bar{d}(\bar{ \phi }(x_0),\bar{\phi}(x_1)).
\end{align*}
Then we have 
\begin{align*}
\bar{d}'(x_0,x_1) - \bar{d}(\bar{\phi}(x_0),\bar{\phi}(x_1)) 
&= \bar{d}'(x_0,x_1) - d(\phi(s'(x_0)),\phi(s'(x_1))\gamma_0) \\
&< \bar{d}'(x_0,x_1) - d(\phi(s'(x_0)),\phi(s'(x_1)\gamma_0)) + \varepsilon \\
&< \bar{d}'(x_0,x_1) - d'(s'(x_0),s'(x_1)\gamma_0) 
+ 2\varepsilon \\
&\le 2\varepsilon.
\end{align*}
Take $\gamma_1\in G$ such that 
$d'(s'(x_0),s'(x_1)\gamma_1) = \bar{d}'(x_0,x_1)$. 
Then we have 
\begin{align*}
\bar{d}(\bar{\phi}(x_0),\bar{\phi}(x_1)) - \bar{d}'(x_0,x_1) 
&= \bar{d}(\bar{\phi}(x_0),\bar{\phi}(x_1)) - d'(s'(x_0),s'(x_1)\gamma_1)\\
&< \bar{d}(\bar{\phi}(x_0),\bar{\phi}(x_1)) 
- d(\phi(s'(x_0)),\phi(s'(x_1)\gamma_1)) \\
&\quad\quad+ \varepsilon\\
&< \bar{d}(\bar{\phi}(x_0),\bar{\phi}(x_1)) 
- d(\phi(s'(x_0)),\phi(s'(x_1))\gamma_1) \\
&\quad\quad+ 2\varepsilon\\
&\le 2\varepsilon.
\end{align*}
Next we show $X\subset B(\bar{\phi}(X'),2\varepsilon)$. 
Let $\bar{u}\in X$, then there is $u'\in P'$ 
such that $d(u,\phi(u')) < \varepsilon$. 
Take $\gamma_2\in G$ such that $u'=s'(\bar{u}')\gamma_2$. 
Then we have 
\begin{align*}
\bar{d}(\bar{u},\bar{\phi}(\bar{u}')) 
\le d(u,\phi(s'(\bar{u}'))\gamma_2) 
< d(u,\phi(s'(\bar{u}')\gamma_2)) + \varepsilon <2\varepsilon, 
\end{align*}
which implies $\bar{u}\in B(\bar{\phi}(X'),2\varepsilon)$.

Suppose $\nu$ and $\nu'$ are Borel measures 
on $P$ and $P'$ respectively and 
$f\colon X_\infty\to \R$ is a continuous function. 
Now we have 
\begin{align*}
\left| \int_X f\, d(\bar{\phi}_*\bar{\nu}') - \int_X f\, d\bar{\nu}\right|
&\le \left| \int_{P'} f\circ\bar{\phi}\circ\pi'\, d\nu' 
- \int_{P'} f\circ\pi\circ\phi\, d\nu'\right|\\
&\quad\quad + \left| \int_P f \circ\pi\, d\phi_*\nu' 
- \int_P f\circ\pi \, d\nu\right|\\
&\le \left\| f\circ\bar{\phi}\circ\pi' 
- f\circ\pi\circ\phi \right\|_{L^\infty}\nu' (P')\\
&\quad\quad + \left| \int_P f \circ\pi\, d\phi_*\nu' 
- \int_P f\circ\pi \, d\nu\right|.
\end{align*}
Since the support of $f$ is compact, 
it is uniformly continuous, therefore 
for any $\varepsilon'>0$ there is $\delta>0$ 
which depends only on $\varepsilon'$ and $f$ such that 
$\bar{d} (x_0,x_1)<\delta$ implies $|f(x_0)-f(x_1)|<\varepsilon'$. 
Since 
\begin{align*}
\bar{d} (\bar{\phi}\circ\pi'(u'),\pi\circ\phi(u'))
&= \inf_{\gamma\in G} d(\phi (s'(\bar{u}')),\phi(u')\gamma) \\
&< \inf_{\gamma\in G} d(\phi (s'(\bar{u}')), \phi (u'\gamma)) 
+ \varepsilon \\
&< \inf_{\gamma\in G} d'(s'(\bar{u}'), u'\gamma) 
+ 2\varepsilon = 2\varepsilon,
\end{align*}
hence if $\varepsilon$ is not more than $\frac{\delta}{2}$ 
then 
\begin{align*}
\left| \int_X f\, d(\bar{\phi}_*\bar{\nu}') - \int_X f\, d\bar{\nu}\right|
&\le \nu' (P') \varepsilon' 
+ \left| \int_P f \circ\pi\, d\phi_*\nu' 
- \int_P f\circ\pi \, d\nu\right|
\end{align*}
holds. 
\end{proof}

As a consequence of Proposition 
\ref{G-eq conv and base conv}, 
we obtain the following results.

\begin{cor}
If $(P_i,d_i,\nu_i) \GmGH (P_\infty,d_\infty,\nu_\infty)$, $P_i$ and $P_\infty$ are relatively 
compact and 
every $P_i\to P_i/G$ has a Borel sections, then 
$(P_i/G,\bar{d}_i,\bar{\nu}_i) \mGH (P_\infty/G,\bar{d}_\infty,\bar{\nu}_\infty)$. 
\label{bundle to base}
\end{cor}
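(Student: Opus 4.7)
The plan is to reduce everything to Proposition \ref{G-eq conv and base conv}. By the hypothesis $P_i \GmGH P_\infty$, there are Borel $\varepsilon_i$-$G$-equivariant Hausdorff approximations $\phi_i \colon P_i \to P_\infty$ with $\varepsilon_i \to 0$ and $(\phi_i)_* \nu_i \to \nu_\infty$ vaguely. Combined with the assumed Borel sections of $P_i \to P_i/G$, Proposition \ref{G-eq conv and base conv} immediately upgrades each $\phi_i$ to a Borel $2\varepsilon_i$-isometry $\bar{\phi}_i \colon P_i/G \to P_\infty/G$ on the quotient, which takes care of the Gromov-Hausdorff part of the desired conclusion since $2\varepsilon_i \to 0$.

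What remains is vague convergence of $(\bar{\phi}_i)_* \bar{\nu}_i$ to $\bar{\nu}_\infty$. Fix $f \in C_0(P_\infty/G)$ and $\varepsilon' > 0$, and let $\delta > 0$ be the constant associated to $(f, \varepsilon')$ by Proposition \ref{G-eq conv and base conv}. For $i$ large enough that $\varepsilon_i \leq \delta$, that proposition gives
\begin{align*}
\left| \int f\, d(\bar{\phi}_i)_* \bar{\nu}_i - \int f\, d\bar{\nu}_\infty \right|
\leq \nu_i(P_i)\, \varepsilon'
+ \left| \int f \circ \pi_\infty\, d(\phi_i)_* \nu_i - \int f \circ \pi_\infty\, d\nu_\infty \right|.
\end{align*}
Since $G$ is compact, $\pi_\infty$ is proper, so $f \circ \pi_\infty \in C_0(P_\infty)$, and the second term on the right tends to $0$ by the hypothesized vague convergence of $(\phi_i)_* \nu_i$. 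It then suffices to control $\nu_i(P_i)$ uniformly and let $\varepsilon' \to 0$.

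The only step that is not a purely mechanical unpacking of Proposition \ref{G-eq conv and base conv} is the uniform bound on $\nu_i(P_i)$. Relative compactness of $P_\infty$ together with the Radon property gives $\nu_\infty(P_\infty) < \infty$; approximating the constant function $1$ on $\overline{P_\infty}$ from below by compactly supported continuous functions on $P_\infty$ and using vague convergence of $(\phi_i)_* \nu_i$ yields $\sup_i \nu_i(P_i) =: M < \infty$. Plugging this into the displayed inequality and first taking $i \to \infty$, then $\varepsilon' \to 0$, concludes the argument and delivers $(P_i/G, \bar{d}_i, \bar{\nu}_i) \mGH (P_\infty/G, \bar{d}_\infty, \bar{\nu}_\infty)$.

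The main obstacle — modest, but the one thing not automatically packaged by Proposition \ref{G-eq conv and base conv} — is precisely this uniform finiteness of $\nu_i(P_i)$, which is why the relative compactness assumption on the $P_i$ and $P_\infty$ is exactly what is needed to close the estimate.
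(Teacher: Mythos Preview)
Your approach is precisely what the paper intends: it states the corollary without proof as a direct consequence of Proposition \ref{G-eq conv and base conv}, and you have unpacked that correctly. The one slip is in the bound on $\nu_i(P_i)$: approximating $1$ \emph{from below} by $g_n\in C_0(P_\infty)$ gives $\int g_n\,d(\phi_i)_*\nu_i \le \nu_i(P_i)$, so vague convergence only yields $\nu_\infty(P_\infty)\le\liminf_i\nu_i(P_i)$, which is the wrong inequality. What you want instead is some $g\in C_0(P_\infty)$ with $g\equiv 1$ on all $\phi_i(P_i)$; under the relative-compactness hypothesis (in practice $P_\infty$ is compact, so simply $g\equiv 1$ works) this gives $\nu_i(P_i)=\int g\,d(\phi_i)_*\nu_i\to\int g\,d\nu_\infty<\infty$. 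The paper carries out exactly this maneuver in the proof of the pointed analogue, Corollary \ref{bundle to base pt}.
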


Next we generalize Corollary \ref{bundle to base} 
to the pointed noncompact spaces.

\begin{definition}
\normalfont 
A metric space is said to be 
{\it proper} if all open balls $B(p,r)$ are relatively compact.  
\end{definition}
It is easy to see that subsets of proper metric spaces are compact iff they are 
closed and bounded.

\begin{lem}
Let $(P,d)$ be a metric space, 
$G$ be a compact topological group and 
$G$ acts on $P$ isometrically. 
Assume that $(P/G,\bar{d})$ is proper. 
Then the quotient map $\pi\colon P\to P/G$ 
is a proper map iff $P$ is proper. 
\end{lem}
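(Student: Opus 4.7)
The plan is to relate closed balls in $P$ to closed balls in $P/G$ through $G$-orbits, using the compactness of $G$ to promote compactness in the base to compactness in the total space.

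First I would establish the key identity
\[
\pi^{-1}(\bar{B}(\bar{p}, r)) \;=\; \bar{B}(p, r)\cdot G
\]
for any $p\in P$, $\bar{p}=\pi(p)$, and any $r>0$, where $\bar{B}$ denotes the closed ball. One inclusion ($\supseteq$) is immediate from the fact that $\pi$ is $1$-Lipschitz (part of the submetry property in Proposition \ref{quotient metric}) together with the $G$-invariance of $\pi$. For the other inclusion, if $\bar{d}(\bar{p}, \bar{q}) \le r$, then since the map $\gamma\mapsto d(p,q\gamma)$ is continuous on the compact group $G$, its infimum is attained at some $\gamma_0\in G$, giving $q\gamma_0 \in \bar{B}(p,r)$ and hence $q \in \bar{B}(p,r)\cdot G$.

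With the identity in hand, both directions follow quickly. If $\pi$ is proper, then for any $p\in P$ and $r>0$ the closed ball $\bar{B}(\bar{p},r)\subseteq P/G$ is compact (using properness of $P/G$), so $\pi^{-1}(\bar{B}(\bar{p},r))$ is compact; since $\bar{B}(p,r)$ is a closed subset of it, it is compact, and $P$ is proper. Conversely, if $P$ is proper and $K\subseteq P/G$ is compact, then $K$ is bounded and contained in some $\bar{B}(\bar{p},R)$, so by the identity $\pi^{-1}(K) \subseteq \bar{B}(p,R)\cdot G$. The closed ball $\bar{B}(p,R)$ is compact by properness of $P$, and $G$ is compact, so $\bar{B}(p,R)\cdot G$ is the continuous image of the compact set $\bar{B}(p,R)\times G$ under the (continuous) action map, hence compact. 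Since $\pi$ is continuous and $K$ is closed, $\pi^{-1}(K)$ is closed in this compact set, therefore compact; this proves $\pi$ is a proper map.

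The main (and really only) nontrivial point is the $\subseteq$ inclusion in the key identity, which is precisely where the compactness of $G$ enters — without it, the infimum defining $\bar{d}$ need not be attained and the orbit $\bar{B}(p,r)\cdot G$ could fail to exhaust $\pi^{-1}(\bar{B}(\bar{p},r))$. Everything else in the argument is formal: continuity of $\pi$ (a submetry), continuity of the $G$-action, and the elementary fact that closed subsets of compact sets are compact.
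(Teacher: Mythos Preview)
Your proof is correct. The forward direction ($\pi$ proper $\Rightarrow$ $P$ proper) matches the paper's argument essentially verbatim, modulo the cosmetic difference of closed versus open balls.

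For the backward direction ($P$ proper $\Rightarrow$ $\pi$ proper), however, you and the paper take genuinely different routes. You use the explicit identity $\pi^{-1}(\bar{B}(\bar{p},r)) = \bar{B}(p,r)\cdot G$ (which relies on the infimum defining $\bar{d}$ being attained, i.e., on compactness of $G$), then observe that any compact $K\subset P/G$ sits inside a closed ball, so $\pi^{-1}(K)$ is a closed subset of the compact image $\bar{B}(p,R)\times G \to P$. The paper instead argues topologically: it uses only that $P$ is locally compact, that $\pi$ is an open map, and that the action $P\times G\to P$ is continuous. Given a compact $K$, it refines an arbitrary open cover of $\pi^{-1}(K)$ to one by relatively compact sets $V_\beta$, pushes this down via $\pi$ to an open cover of $K$, extracts a finite subcover, and then traps $\pi^{-1}(K)$ inside the compact set $\varphi\bigl(\bigcup_i \overline{V_{\beta_i}}\times G\bigr)$. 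Your argument is shorter and exploits the metric structure directly; the paper's argument is more ``soft'' in that it never invokes the ball identity or the attainment of the infimum, and would go through in any setting where $\pi$ is an open continuous map and $P$ is locally compact Hausdorff.
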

\begin{proof}
Suppose $\pi$ is proper. 
Since $\pi$ is submetry, 
$\pi(B(u,r))=B(\bar{u},r)$ holds for any $u\in P$ 
and $r>0$. 
Then one can see 
$B(u,r)\subset \pi^{-1}(B(\bar{u},r))$, 
hence $B(u,r)$ is relatively compact. 
Next we suppose $P$ is proper, 
then it is locally compact. 
For any compact subset $K\subset P/G$ 
we take an open cover 
$\pi^{-1}(K)\subset \bigcup_\alpha 
U_\alpha$. 
By the locally compactness 
of $P$ we obtain another open cover
$\pi^{-1}(K)\subset \bigcup_\beta V_\beta$ 
such that every 
$V_\beta$ is relatively compact and contained in some $U_\alpha$. 
Since $\pi$ is an open map, 
$\{ \pi(V_\beta)\}_\beta$ is 
an open cover of $K$, 
therefore we have 
$K\subset \bigcup_{i=1}^N V_{\beta_i}$ for 
some $\beta_1,\ldots,\beta_N$. 
Denote by $\varphi\colon P\times G\to P$ 
the continuous map which defines the $G$-action 
on $P$. Then one can see that 
$\pi^{-1}(K)$ is contained in 
$\varphi(\bigcup_i V_{\beta_i}\times G)$, which is 
relatively compact.
Since $\pi^{-1}(K)$ is closed, it is compact.
\end{proof}

\begin{cor}
If $(P_i,d_i,\nu_i,p_i) \GmGH (P_\infty,d_\infty,\nu_\infty,p_\infty)$, 
every $P_i\to P_i/G$ has a Borel sections, 
$P_i/G$ and $P_\infty/G$ are proper, 
$\pi_i$ and $\pi_\infty$ are proper and 
$\nu_\infty(K)<\infty$ holds for any compact 
subset $K\subset P_\infty$, 
then 
$(P_i/G,\bar{d}_i,\bar{\nu}_i) \mGH (P_\infty/G,\bar{d}_\infty,\bar{\nu}_\infty)$. 
\label{bundle to base pt}
\end{cor}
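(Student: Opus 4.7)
My plan is to transport the $G$-equivariant convergence upstairs to measured Gromov-Hausdorff convergence on the quotients by applying Proposition \ref{G-eq conv and base conv} on each ball and then invoking the criterion in Remark \ref{equiv pmGH}. Fix $R>0$. The hypothesis $(P_i,d_i,\nu_i,p_i)\GmGH(P_\infty,d_\infty,\nu_\infty,p_\infty)$ supplies Borel $\varepsilon_i$-$G$-equivariant Hausdorff approximations
\[
\phi_i\colon(\pi_i^{-1}(B(x_i,R_i)),p_i)\to(\pi_\infty^{-1}(B(x_\infty,R)),p_\infty)
\]
with $\varepsilon_i\to 0$, $R_i\to R$, and $(\phi_i)_*(\nu_i|_{\pi_i^{-1}(B(x_i,R_i))})\to\nu_\infty|_{\pi_\infty^{-1}(B(x_\infty,R))}$ vaguely. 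Because $\pi_i$ is a submetry by Proposition \ref{quotient metric}, the $G$-quotient of $\pi_i^{-1}(B(x_i,R_i))$ is exactly $B(x_i,R_i)$.

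Modifying the given Borel section $s_i$ at the single point $x_i$ so that $s_i(x_i)=p_i$ (which preserves its Borel property), Proposition \ref{G-eq conv and base conv} produces Borel $2\varepsilon_i$-isometries $\bar\phi_i\colon B(x_i,R_i)\to B(x_\infty,R)$ with $\bar\phi_i(x_i)=\pi_\infty(\phi_i(p_i))=x_\infty$. For any $f\in C_0(P_\infty/G)$ and any $\varepsilon'>0$, the estimate of that proposition gives
\[
\left|\int f\,d(\bar\phi_i)_*\bar\nu_i|_{B(x_i,R_i)}-\int f\,d\bar\nu_\infty|_{B(x_\infty,R)}\right|\le\nu_i(\pi_i^{-1}(B(x_i,R_i)))\,\varepsilon'+E_i,
\]
where $E_i=\bigl|\int(f\circ\pi_\infty)\,d(\phi_i)_*\nu_i|_{\pi_i^{-1}(B(x_i,R_i))}-\int(f\circ\pi_\infty)\,d\nu_\infty|_{\pi_\infty^{-1}(B(x_\infty,R))}\bigr|$. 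Since $\pi_\infty$ is proper and $\mathrm{supp}\,f$ is compact, $f\circ\pi_\infty\in C_0(P_\infty)$, so $E_i\to 0$ by the vague convergence upstairs.

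The main obstacle is securing the uniform bound $\sup_i\nu_i(\pi_i^{-1}(B(x_i,R_i)))<\infty$ needed to control the first term. For each fixed $i$, properness of $P_i/G$ and of $\pi_i$ shows $\pi_i^{-1}(\overline{B(x_i,R_i)})$ is compact and hence of finite $\nu_i$-measure, but uniformity in $i$ must come from the convergence data itself. I would choose $\varphi\in C_0(P_\infty)$ with $0\le\varphi\le 1$ and $\varphi\equiv 1$ on $\pi_\infty^{-1}(\overline{B(x_\infty,R)})$ (compact by the same argument). Since $\phi_i$ takes values in $\pi_\infty^{-1}(B(x_\infty,R))$, we have $\varphi\circ\phi_i\equiv 1$ on $\pi_i^{-1}(B(x_i,R_i))$, so
\[
\nu_i(\pi_i^{-1}(B(x_i,R_i)))=\int\varphi\,d(\phi_i)_*\nu_i|_{\pi_i^{-1}(B(x_i,R_i))}\longrightarrow\int\varphi\,d\nu_\infty|_{\pi_\infty^{-1}(B(x_\infty,R))}<\infty
\]
by the vague convergence and the hypothesis $\nu_\infty(K)<\infty$ for the compact set $K=\mathrm{supp}\,\varphi$. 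With this uniform bound in hand, choosing $\varepsilon'$ small then $i$ large makes the entire right side of the main estimate arbitrarily small, yielding vague convergence $(\bar\phi_i)_*\bar\nu_i|_{B(x_i,R_i)}\to\bar\nu_\infty|_{B(x_\infty,R)}$. Since $R>0$ was arbitrary and $\bar\phi_i(x_i)=x_\infty$, Remark \ref{equiv pmGH} yields the claimed pointed mGH convergence $(P_i/G,\bar d_i,\bar\nu_i,x_i)\mGH(P_\infty/G,\bar d_\infty,\bar\nu_\infty,x_\infty)$.
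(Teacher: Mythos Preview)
Your proposal is correct and follows essentially the same route as the paper's proof: apply Proposition~\ref{G-eq conv and base conv} on the $G$-saturated balls to obtain the quotient approximations, use properness of $\pi_\infty$ to ensure $f\circ\pi_\infty\in C_0(P_\infty)$ so that the error term $E_i\to 0$ by the upstairs vague convergence, and control $\sup_i\nu_i(\pi_i^{-1}(B(x_i,R_i)))$ by testing the vague convergence against a compactly supported bump $\varphi$ that is identically $1$ on the image of $\phi_i$. Your write-up is in fact more careful than the paper's in two respects: you make explicit the one-point modification of the Borel section so that $\bar\phi_i(x_i)=x_\infty$, and you spell out exactly why $\varphi\circ\phi_i\equiv 1$ (because $\phi_i$ lands in $\pi_\infty^{-1}(B(x_\infty,R))$ by definition of the $G$-equivariant approximations).
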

\begin{proof}
By applying Proposition \ref{G-eq conv and base conv} for $P'=B(p_i,r_i)$ and 
$P=B(p_\infty,r_\infty)$, we can see that 
$(P_i,d_i,p_i)$ converges to 
$(P_\infty,d_\infty,p_\infty)$ with respect to 
pointed Gromov-Hausdorff topology. 
To show the vague convergence of 
measures by using the inequality 
in Proposition \ref{G-eq conv and base conv}, 
we have to show that $f\circ \pi_\infty$ has compact support and 
$\limsup_{i\to \infty}\nu_i(B(p_i,r_i))
<\infty$ holds if $r_i\to r_\infty$. 
Since we suppose that 
$f\in C_0(B(\bar{p}_\infty,r))$ and $\pi_\infty$ 
is proper, then the support of 
$f\circ \pi_\infty$ is compact. 
Next we take $f\in C_0(P_\infty)$ 
such that $f=1$ on $\bigcup_i \phi_i(B(p_i,r_i))$ 
and $f\le 1$ everywhere, 
then one can see 
\begin{align*}
\limsup_{i\to \infty}\nu_i(B(p_i,r_i))
\le \nu_\infty({\rm supp}(f)).
\end{align*}
\end{proof}

\begin{prop}
Let $(P_i,d_i) \GH (P_\infty,d_\infty)$, that is, 
there are $\varepsilon_i$-isometry 
$\phi_i\colon P_i\to P_\infty$ for each $i$ and 
$\lim_{i\to\infty}\varepsilon_i = 0$ holds. 
Suppose that a compact topological group $G$ acts on 
every $P_i$ isometrically and 
$P_\infty$ is relatively compact. 
Moreover assume that the family of continuous maps 
$\{ F_{i,u}\}_{i\in\Z_{\ge 0},u\in P_i}$, where 
\[ 
\left.
\begin{array}{cccc}
F_{i,u}\colon & G\times G & \longrightarrow & \R \\
& \rotatebox{90}{$\in$} & & \rotatebox{90}{$\in$} \\
& (\gamma,\gamma') & \longmapsto & d_i(u\gamma, u\gamma'),
\end{array}
\right.
\]
is equicontinuous. 
For any decreasing sequence 
$\{ \hat{\varepsilon}_k\}_k$ with 
$\lim_{k\to \infty}\hat{\varepsilon}_k = 0$, 
there are a subsequence 
$\{ (P_{i_k},d_{i_k})\}_k$ and 
an isometric $G$-action on $(P_\infty,d_\infty)$ 
such that $\phi_{i_k}$ is an 
$\hat{\varepsilon}_k$-$G$-equivariant 
Hausdorff approximation. 
\label{cptness 1}
\end{prop}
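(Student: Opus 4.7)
The plan is to construct the limit $G$-action on $P_\infty$ as a pointwise limit of the given actions along a diagonally extracted subsequence, first on a countable dense set and then extending by continuity, with the rate $\hat\varepsilon_k$ being achieved by thinning the subsequence further.

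First, since $P_\infty$ is relatively compact it is separable, so I fix a countable dense subset $\{u_\infty^m\}_{m}\subset P_\infty$. Using that $\phi_i(P_i)$ is $\varepsilon_i$-dense in $P_\infty$, choose lifts $u_i^m\in P_i$ with $d_\infty(\phi_i(u_i^m),u_\infty^m)<\varepsilon_i$. Fix also a countable dense subset $\{\gamma_l\}_l\subset G$ (harmless when $G$ is a compact Lie group; in full generality a net/ultrafilter argument replaces countable diagonalization). For each pair $(m,l)$ the sequence $\{\phi_i(u_i^m\gamma_l)\}_i$ lies in the compact set $\overline{P_\infty}$, so a Cantor diagonal extraction produces a subsequence $\{i_k\}$ along which
\[
u_\infty^m\cdot\gamma_l \ :=\ \lim_{k\to\infty}\phi_{i_k}(u_{i_k}^m\gamma_l)
\]
exists for every $m,l$.

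Next I would verify that this assignment is isometric on the dense set and uniformly continuous in $\gamma$. Combining the $\varepsilon_{i_k}$-isometry property of $\phi_{i_k}$ with the isometry of each $G$-action gives $d_\infty(u_\infty^m\gamma_l,u_\infty^n\gamma_l)=d_\infty(u_\infty^m,u_\infty^n)$, while $d_\infty(u_\infty^m\gamma_l,u_\infty^m\gamma_{l'})=\lim_k F_{i_k,u_{i_k}^m}(\gamma_l,\gamma_{l'})$. The equicontinuity hypothesis together with compactness of $G\times G$ yields a uniform modulus of continuity in $\gamma$, so $\gamma\mapsto u_\infty^m\gamma$ extends continuously from $\{\gamma_l\}_l$ to all of $G$, and $u\mapsto u\gamma$ extends isometrically from $\{u_\infty^m\}_m$ to $P_\infty$. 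The group axioms and joint continuity of the limit action follow from the same ingredients, applied along the defining subsequence.

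Finally, to realize the prescribed rate $\hat\varepsilon_k$, I would thin the subsequence further so that at stage $k$: (i) $\varepsilon_{i_k}<\hat\varepsilon_k$; (ii) a finite initial segment $\{u_{i_k}^m\}_{m\le N(k)}$ is a $\hat\varepsilon_k$-net in $P_{i_k}$, and $\{\gamma_l\}_{l\le L(k)}$ is a $\delta_k$-net in $G$, where $\delta_k$ is the common modulus from equicontinuity ensuring $F_{i,u}(\gamma,\gamma')<\hat\varepsilon_k$ whenever $\gamma,\gamma'$ are $\delta_k$-close; (iii) $d_\infty(\phi_{i_k}(u_{i_k}^m\gamma_l),u_\infty^m\gamma_l)<\hat\varepsilon_k$ for $m\le N(k), l\le L(k)$. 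For a general $u\in P_{i_k}$ and $\gamma\in G$, approximating $u$ by a suitable $u_{i_k}^m$ and $\gamma$ by some $\gamma_l$ and stringing triangle inequalities through $\phi_{i_k}(u\gamma),\phi_{i_k}(u_{i_k}^m\gamma_l),u_\infty^m\gamma_l$ and $\phi_{i_k}(u)\gamma$ yields $d_\infty(\phi_{i_k}(u\gamma),\phi_{i_k}(u)\gamma)<\hat\varepsilon_k$, which is exactly the $\hat\varepsilon_k$-$G$-equivariant Hausdorff approximation condition.

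The main obstacle is maintaining the equivariance defect \emph{uniformly} in $u\in P_{i_k}$ and $\gamma\in G$ — pointwise limits along a countable dense set only translate into a uniform equivariance rate once the displacement $d_i(u\gamma,u\gamma')$ is controlled by a modulus independent of $i$ and $u$. This is precisely the role of the equicontinuity hypothesis on $\{F_{i,u}\}$: without it, approximating $\gamma$ by a nearby $\gamma_l$ could generate errors in $d_i(u\gamma,u\gamma_l)$ that blow up with $i$ or $u$, and neither the continuity of the limit action on $G$ nor the uniform equivariance rate could be secured.
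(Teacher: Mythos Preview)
Your argument follows the same architecture as the paper's: choose countable dense sets in $P_\infty$ and in (a suitable quotient of) $G$, lift to $P_i$, extract a diagonal subsequence so that $\phi_{i_k}(u_{i_k}^m\gamma_l)$ converges for all $m,l$, extend the resulting partial action by continuity and isometry, verify the group law by passing to limits, and bound $d_\infty(\phi_{i_k}(u\gamma),\phi_{i_k}(u)\gamma)$ uniformly via triangle inequalities through the net points. Your identification of equicontinuity as the mechanism that converts pointwise convergence on a dense set into a uniform equivariance defect is exactly the paper's point as well.

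The one place where the paper does something you only gesture at is the treatment of a general compact \emph{topological} group $G$, which need not be metrizable or second countable. Rather than invoke nets or ultrafilters, the paper first passes to the effective quotient $G_0\backslash G$ (with $G_0$ the common kernel of all the actions) and then \emph{manufactures} a metric on it by
\[
d_G(\gamma,\gamma')\ :=\ \sup_{i,u} d_i(u\gamma,u\gamma')\ =\ \sup_{i,u} F_{i,u}(\gamma,\gamma').
\]
Equicontinuity makes $d_G$ continuous for the quotient topology, and a compact--Hausdorff bijection argument shows $d_G$ actually induces that topology; so $G_0\backslash G$ is a compact metric space and countable dense subsets are legitimately available after all. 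This is both cleaner than an ultrafilter argument and extracts a second use from the equicontinuity hypothesis, beyond the uniform-in-$\gamma$ estimate you highlight. For the compact Lie groups that appear in the applications, your shortcut is of course harmless.
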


\begin{proof}
Put 
\begin{align*}
G_0:=\{ \gamma\in G;\, u\gamma=u \mbox{ for 
all }u\in X_i\mbox{ and }i\in\Z_{\ge 0}\},
\end{align*}
then $G_0$ is a closed normal subgroup of $G$. 
First of all we induce the metric on the quotient group $G_0\backslash G$ compatible with its quotient topology. 
For every $u\in P_i$, let $P_{i,u}$ be 
the metric space isometric to $P_i$ and put 
\begin{align*}
\mathcal{X}:=\prod_{i\in\Z_{\ge 0},u\in P_i} P_{i,u}
\end{align*}
be the metric space whose distance function is given by 
\begin{align*}
d_\mathcal{X}((p_{i,u})_{i,u},(q_{i,u})_{i,u}):=
\sup_{i,u}d_i(p_{i,u},q_{i,u}).
\end{align*}
Define the injective map 
$f_\mathcal{X}\colon G_0\backslash G \to \mathcal{X}$ 
by $f_\mathcal{X}(\gamma):=(u\gamma)_{i,u}$. 
Here, we denote by $\gamma$ the right coset 
in $G_0\backslash G$ represented by 
$\gamma$ for the brevity. 
Then the induced metric 
$d_G:=f_\mathcal{X}^*d_\mathcal{X}$ on 
$G_0\backslash G$ is given by 
\begin{align*}
d_G(\gamma,\gamma')
= \sup_{i,u}d(u\gamma,u\gamma') 
= \sup_{i,u}F_{i,u}(\gamma,\gamma').
\end{align*}
Denote by $\mathcal{O}$ the quotient topology 
on $G_0\backslash G$ and denote 
by $\mathcal{O}_{d_G}$ the topology induced by 
$d_G$. 
Since $\{ F_{i,u}\}_{i,u}$ is equicontinuos, 
$d_G\colon G_0\backslash G \times 
G_0\backslash G \to \R$ is continuous 
with respect to $\mathcal{O}$, 
hence $\mathcal{O}_{d_G}$ is weaker than or 
equal to $\mathcal{O}$ then one can see 
$(G_0\backslash G, \mathcal{O}_{d_G})$ is compact 
and $f_\mathcal{X}(G_0\backslash G)$ is closed in 
$\mathcal{X}$. 
Now we can see that 
$f_\mathcal{X}\colon (G_0\backslash G,\mathcal{O}) \to 
\mathcal{X}$ is continuous by 
the following reason. 
If $x\in \mathcal{X}\setminus 
f_\mathcal{X}(G_0\backslash G)$, 
there exists $\delta>0$ such that 
$B(x,\delta)\subset \mathcal{X}\setminus 
f_\mathcal{X}(G_0\backslash G)$, 
hence $f_\mathcal{X}^{-1}(B(x,\delta))=\emptyset$. 
If $x\in f_\mathcal{X}(G_0\backslash G)$ then 
there exists $\gamma_0\in G$ such that 
$f_\mathcal{X}(\gamma_0)=x$ and 
we have $f_\mathcal{X}^{-1}(B(x,\delta))=
d_G(\gamma_0,\cdot)^{-1}((-\infty,\delta))$, 
which is an open set. 
Thus we can see that 
$f_\mathcal \colon
(G_0\backslash G,\mathcal{O}) \to f_\mathcal{X}(G_0\backslash G)$ 
is a bijective continuous map from a 
compact space to a Hausdorff space, hence it is 
a homeomorphism, which implies 
$\mathcal{O}=\mathcal{O}_{d_G}$.

Since $P_\infty$ and 
$G_0\backslash G$ 
are compact 
metric spaces, they are separable. 
Let 
\begin{align*}
\{ u^{(\alpha)}\}_{\alpha=0}^\infty \subset P_\infty,
\quad \{ \gamma^{(\beta)}\}_{\beta=0}^\infty \subset G_0\backslash G
\end{align*}
be countable dense subsets. 

By the compactness of $P_\infty$ and 
$G_0\backslash G$, 
we may assume that there are increasing sequences of integers 
$0=K_0<K_1<K_2<\cdots$ and 
$0=L_0<L_1<L_2<\cdots$ such that 
\begin{align*}
\bigcup_{\alpha=K_m}^{K_{m+1} -1} 
B\left(u^{(\alpha)}, \frac{1}{2^m}\right)
 = P_\infty,
\quad \bigcup_{\beta=L_m}^{L_{m+1}-1} 
B\left(\gamma^{(\beta)}, \frac{1}{2^m}\right) 
= G_0\backslash G.
\end{align*}

First of all we define 
$u^{(\alpha)}\cdot\gamma^{(\beta)}\in P_\infty$. 
Take $u^{(\alpha)}_i\in P_i$ such that 
\begin{align*}
d_\infty(\phi_i(u^{(\alpha)}_i),u^{(\alpha)})
<\varepsilon_i.
\end{align*}
Since $P_\infty$ is relatively compact, 
there is a subsequence of $\{ \phi_i(u^{(\alpha)}_i\gamma^{(\beta)})\}_i$ converging to 
some points in $P_\infty$. 
By repeating this procedure for 
\begin{align*}
(\alpha,\beta)=(0,0),(1,0),(0,1),(2,0),(1,1),(0,2),\ldots
\end{align*}
and by the diagonal argument, 
there is $\{ i_k\}_{k=0}\subset \{ i=0,1,2,\ldots\}$ 
such that 
$\{ \phi_{i_k}(u^{(\alpha)}_{i_k}\gamma^{(\beta)})\}_{i_k}$ 
converges to the limit. 
We put 
\begin{align*}
u^{(\alpha)}\gamma^{(\beta)}
:= \lim_{k\to\infty} \phi_{i_k}(u^{(\alpha)}_{i_k}\gamma^{(\beta)}), 
\end{align*}
then we obtain a map 
\begin{align}
\{ u^{(\alpha)}\}_{\alpha}\times 
\{ \gamma^{(\beta)}\}_{\beta}
\to P_\infty,\quad 
(u^{(\alpha)},\gamma^{(\beta)})\mapsto 
u^{(\alpha)}\gamma^{(\beta)}.\label{action map}
\end{align}
By taking a subsequence 
$\{ i_{0,k}\}_k$ of $\{ i_k\}_k$, 
we may suppose that 
\begin{align*}
d_\infty(\phi_{i_{0,k}}(u^{(\alpha)}_{i_{0,k}}\gamma^{(\beta)}), 
u^{(\alpha)}\gamma^{(\beta)})
< \frac{1}{2^k}
\end{align*}
for any $0\le \alpha< K_1$, 
$0\le \beta< L_1$ and $k\ge 0$. 
We can take a subsequence 
$\{ i_{m,k}\}_k$ of $\{ i_{m-1,k}\}_k$ 
inductively such that 
\begin{align*}
d_\infty(\phi_{i_{m,k}}(u^{(\alpha)}_{i_{m,k}}\gamma^{(\beta)}), 
u^{(\alpha)}\gamma^{(\beta)})
< \frac{1}{2^k}
\end{align*}
for any $K_m\le \alpha< K_{m+1}$, 
$L_m\le \beta< L_{m+1}$ and $k\ge 0$. 
By replacing $i_k$ by $i_{k,k}$, we may assume that 
\begin{align*}
d_\infty(\phi_{i_k}(u^{(\alpha)}_{i_k}\gamma^{(\beta)}), 
u^{(\alpha)}\gamma^{(\beta)})
< \frac{1}{2^k}
\end{align*}
for any $K_m\le \alpha< K_{m+1}$, 
$L_m\le \beta< L_{m+1}$ and $k\ge m$

Next we show the continuity of the above map. 
We have 
\begin{align*}
d_\infty(u^{(\alpha)}\gamma^{(\beta)}, 
u^{(\alpha')}\gamma^{(\beta')})
&= d_\infty(\lim_{k\to\infty} \phi_{i_k}(u^{(\alpha)}_{i_k}\gamma^{(\beta)}), 
\lim_{k\to\infty} \phi_{i_k}(u^{(\alpha')}_{i_k}\gamma^{(\beta')}))\\
&= \lim_{k\to\infty} d_\infty(\phi_{i_k}(u^{(\alpha)}_{i_k}\gamma^{(\beta)}), 
\phi_{i_k}(u^{(\alpha')}_{i_k}\gamma^{(\beta')}))\\
&\le \limsup_{k\to\infty} \left( 
d_{i_k}(u^{(\alpha)}_{i_k}\gamma^{(\beta)}, 
u^{(\alpha')}_{i_k}\gamma^{(\beta')}) 
+ \varepsilon_{i_k} \right)\\
&= \limsup_{k\to\infty} 
d_{i_k}(u^{(\alpha)}_{i_k}\gamma^{(\beta)}, 
u^{(\alpha')}_{i_k}\gamma^{(\beta')}) \\
&\le \limsup_{k\to\infty} \Big( 
d_{i_k}(u^{(\alpha)}_{i_k}\gamma^{(\beta)}, 
u^{(\alpha)}_{i_k}\gamma^{(\beta')}) \\
&\quad\quad \quad\quad 
\quad + d_{i_k}(u^{(\alpha)}_{i_k}\gamma^{(\beta')}, 
u^{(\alpha')}_{i_k}\gamma^{(\beta')})
\Big) \\
&= \limsup_{k\to\infty} \Big( 
d_{i_k}(u^{(\alpha)}_{i_k}\gamma^{(\beta)}, 
u^{(\alpha)}_{i_k}\gamma^{(\beta')}) 
+ d_{i_k}(u^{(\alpha)}_{i_k}, u^{(\alpha')}_{i_k})
\Big) \\
&\le d_G(\gamma^{(\beta)},\gamma^{(\beta')}) 
+ d_\infty(u^{(\alpha)}, u^{(\alpha')}).
\end{align*}
which gives the continuity of 
\eqref{action map}. 
Then we can extend the map to 
\begin{align}
P_\infty \times G
\to P_\infty,\quad 
(u,\gamma)\mapsto 
u\gamma.\label{action map 2}
\end{align}
continuously. 
Next we show that for any $u_{i_k}\in P_{i_k}$ and $\gamma\in G$ with 
$\phi_{i_k}(u_{i_k})\to u$, 
we have $\phi_{i_k}(u_{i_k}\gamma)\to u\gamma$. 
Take $u^{(\alpha)}$ and $\gamma^{(\beta)}$ such that 
$d_\infty(u^{(\alpha)}, u)$ and $d_(\gamma^{(\beta)},\gamma)$ are small. 
Then one can see 
\begin{align*}
d_{\infty}(\phi_{i_k}(u^{(\alpha)}_{i_k}\gamma^{(\beta)}),\phi_{i_k}(u_{i_k}\gamma))
&< d_{i_k}(u^{(\alpha)}_{i_k}\gamma^{(\beta)}, u_{i_k}\gamma) 
+ \varepsilon_{i_k}\\
&\le d_{i_k}(u^{(\alpha)}_{i_k}\gamma^{(\beta)}, u^{(\alpha)}_{i_k}\gamma)\\
&\quad\quad 
+ d_{i_k}(u^{(\alpha)}_{i_k}\gamma, u_{i_k}\gamma) + \varepsilon_{i_k}\\
&\le d_G(\gamma^{(\beta)},\gamma)
+ d_{i_k}(u^{(\alpha)}_{i_k}, u_{i_k})
+ \varepsilon_{i_k}\\
&\le d_G(\gamma^{(\beta)},\gamma)
+ d_\infty (\phi_{i_k}(u^{(\alpha)}_{i_k}), \phi_{i_k}(u_{i_k})) + 2\varepsilon_{i_k}\\
&\le d_G(\gamma^{(\beta)},\gamma)
+ d_\infty (u^{(\alpha)},u)\\
&\quad\quad + d_\infty (\phi_{i_k}(u_{i_k}),u)
+ 3\varepsilon_{i_k},
\end{align*}
therefore we obtain
\begin{align}
d_{\infty}(u\gamma,\phi_{i_k}(u_{i_k}\gamma))
&< 
d_{\infty}(u\gamma,u^{(\alpha)}\gamma^{(\beta)})
+ d_{\infty}(u^{(\alpha)}\gamma^{(\beta)}, 
\phi_{i_k}(u^{(\alpha)}_{i_k}\gamma^{(\beta)}))\notag\\
&\quad\quad
+ d_G(\gamma^{(\beta)},\gamma)+ d_\infty (u^{(\alpha)},u)\notag\\
&\quad\quad
+ d_\infty (\phi_{i_k}(u_{i_k}),u)
+ 3\varepsilon_{i_k}\notag\\
&\le 
d_{\infty}(u^{(\alpha)}\gamma^{(\beta)}, 
\phi_{i_k}(u^{(\alpha)}_{i_k}\gamma^{(\beta)}))
+ 2d_G(\gamma^{(\beta)},\gamma)\notag\\
&\quad\quad
+ 2d_\infty (u^{(\alpha)},u)
+ d_\infty (\phi_{i_k}(u_{i_k}),u)
+ 3\varepsilon_{i_k}.
\label{ineq action}
\end{align}

Now, choose 
$\alpha,\beta$ such that 
$K_k\le \alpha< K_{k+1}$, 
$L_k\le \beta< L_{k+1}$ and 
$d_\infty (u^{(\alpha)},u)<\frac{1}{2^k}$, 
$d_G (\gamma^{(\beta)},\gamma)<\frac{1}{2^k}$. 
Then by \eqref{ineq action} we have 
\begin{align}
d_{\infty}(u\gamma,
\phi_{i_k}(u_{i_k}\gamma))
&\le \frac{5}{2^k} + 3\varepsilon_{i_k} 
+ d_\infty (\phi_{i_k}(u_{i_k}),u),
\label{ineq action 2}
\end{align}
which implies 
$\lim_{k\to\infty}\phi_{i_k}(u_{i_k}\gamma) = u\gamma$.

Next we have to show $(u\gamma)\gamma'=u(\gamma\gamma')$ 
for any $u\in P_\infty$ and $\gamma,\gamma'\in G$. 
Take $u_{i_k}\in P_{i_k}$ such that 
$\phi_{i_k}(u_{i_k})\to u$ as $k\to\infty$. 
Then we have 
$\phi_{i_k}(u_{i_k}\gamma) \to u\gamma$, hence 
$\phi_{i_k}(u_{i_k}\gamma\gamma') \to (u\gamma)\gamma'$. 
Obviously, $\phi_{i_k}(u_{i_k}\gamma\gamma') \to u(\gamma\gamma')$ also holds, 
therefore we obtain $(u\gamma)\gamma'=u(\gamma\gamma')$. 

One can see that the action on $P_\infty$ is isometric 
since 
\begin{align*}
d_\infty (u_0\gamma, u_1\gamma)
&= \lim_{k\to \infty} d_\infty(\phi_{i_k}(u_{0,i_k}\gamma), 
\phi_{i_k}(u_{1,i_k}\gamma))\\
&= \lim_{k\to \infty} d_{i_k}(u_{0,i_k}\gamma, 
u_{1,i_k}\gamma)\\
&= \lim_{k\to \infty} d_{i_k}(u_{0,i_k}, 
u_{1,i_k})\\
&= \lim_{k\to \infty} d_\infty(\phi_{i_k}(u_{0,i_k}), \phi_{i_k}(u_{1,i_k}))
= d_\infty (u_0, u_1).
\end{align*}
where $\phi_{i_k}(u_{0,i_k})\to u_0$, 
$\phi_{i_k}(u_{1,i_k})\to u_1$.

By \eqref{ineq action 2}, we have 
\begin{align*}
d_\infty(\phi_{i_k}(u'\gamma),\phi_{i_k}(u')\gamma)
&\le \frac{5}{2^k} + 3\varepsilon_{i_k},
\end{align*}
for some $\beta$ with $d_G(\gamma^{(\beta)},\gamma)<\frac{1}{2^k}$, 
which implies that $\phi_{i_k}$ 
is an $\frac{5}{2^k} + 3\varepsilon_{i_k}$-$G$-equivariant 
Hausdorff approximation. 
\end{proof}

\begin{rem}
\normalfont
Suppose $P_i$ is compact. 
The following map 
\[ 
\left.
\begin{array}{ccc}
P_i\times G\times G & \longrightarrow & \R \\
\rotatebox{90}{$\in$} & & \rotatebox{90}{$\in$} \\
(u,\gamma,\gamma') & \longmapsto & d_i(u\gamma, u\gamma'),
\end{array}
\right.
\]
is continuous, hence uniformly continuous. 
Then $\{ F_{i,u}\}_{u}$ is always equicontinuous 
for any fixed $i$. 
\end{rem}

As a consequence of Proposition \ref{cptness 1}, 
we also obtain the following.
\begin{prop}
Let $(P_i,d_i,\nu_i)$ and $(P_\infty,d_\infty,\nu_\infty)$ be metric 
measure spaces 
and all of $(P_i,d_i)$ have isometric $G$-actions satisfying the 
assumption of Proposition \ref{cptness 1}, 
and let 
\begin{align*}
(P_i,d_i,\nu_i) \mGH (P_\infty,d_\infty,\nu_\infty).
\end{align*}
Then there exists a subsequence 
$\{ (P_{i_k},d_{i_k},\nu_{i_k})\}_k$ and 
isometric $G$-action on $P_\infty$ such that 
\begin{align*}
(P_{i_k},d_{i_k},\nu_{i_k}) \GmGH (P_\infty,d_\infty,\nu_\infty).
\end{align*}
\label{cptness 1-1}
\end{prop}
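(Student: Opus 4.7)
The plan is to reduce Proposition \ref{cptness 1-1} directly to Proposition \ref{cptness 1}, by observing that the Borel $\varepsilon_i$-isometries witnessing mGH convergence can themselves be promoted, on a suitable subsequence, to Borel $\hat{\varepsilon}_k$-$G$-equivariant Hausdorff approximations without any loss of the measure-convergence data.

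Unpacking Definition \ref{def mGH}(2), the hypothesis $(P_i,d_i,\nu_i)\mGH(P_\infty,d_\infty,\nu_\infty)$ supplies Borel $\varepsilon_i$-isometries $\phi_i\colon P_i\to P_\infty$ with $\varepsilon_i\to 0$ and ${\phi_i}_*\nu_i\to\nu_\infty$ vaguely. In particular the underlying metric spaces satisfy $(P_i,d_i)\GH(P_\infty,d_\infty)$ through the very same maps $\phi_i$. Fix any decreasing sequence $\hat{\varepsilon}_k\downarrow 0$ (say $\hat{\varepsilon}_k=2^{-k}$) and apply Proposition \ref{cptness 1}, whose relative-compactness and equicontinuity hypotheses are assumed here. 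This yields a subsequence $\{i_k\}_k$ and an isometric $G$-action on $P_\infty$ such that each $\phi_{i_k}$ is an $\hat{\varepsilon}_k$-$G$-equivariant Hausdorff approximation. Crucially, these are the same maps already present in the mGH setup, so they remain Borel and remain $\varepsilon_{i_k}$-isometries; the metric conditions of Definition \ref{def GmGH}(2) therefore hold with combined error $\max(\varepsilon_{i_k},\hat{\varepsilon}_k)\to 0$.

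Finally, vague convergence of Radon measures is stable under passage to subsequences, so ${\phi_{i_k}}_*\nu_{i_k}\to\nu_\infty$ vaguely. Combining this with the previous step verifies every clause of Definition \ref{def GmGH}(2), yielding $(P_{i_k},d_{i_k},\nu_{i_k})\GmGH(P_\infty,d_\infty,\nu_\infty)$. Since the analytically nontrivial content, namely the diagonal construction of the limit $G$-action together with the proof of its isometric character, is entirely absorbed into Proposition \ref{cptness 1}, there is essentially no new obstacle; the only point requiring care is the compatibility of the three properties being witnessed by a single family of maps $\phi_{i_k}$, and this compatibility is built in by reusing the original $\phi_i$ throughout.
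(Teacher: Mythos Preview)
Your proposal is correct and takes essentially the same approach as the paper: the paper simply records Proposition \ref{cptness 1-1} as a direct consequence of Proposition \ref{cptness 1} without giving further details, and you have supplied precisely the straightforward verification that the same Borel maps $\phi_i$ witnessing mGH convergence serve, on the subsequence produced by Proposition \ref{cptness 1}, as the Borel $G$-equivariant Hausdorff approximations required for Definition \ref{def GmGH}(2), with the vague convergence of measures surviving passage to subsequences.
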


We also have the noncompact version 
of Proposition \ref{cptness 1-1} as follows. 
\begin{prop}
Let $(P_i,d_i,\nu_i,p_i)$ be 
proper pointed metric measure spaces 
with isometric $G$-actions for 
$i\in\N\cup\{ \infty\}$
such that 
\begin{align*}
(P_i,d_i,\nu_i,p_i) \mGH 
(P_\infty,d_\infty,\nu_\infty,p_\infty).
\end{align*}
Assume that 
$\{ F_{i,u}\}_{i\in\Z_{\ge 0},u\in B(p_i,r_i)}$ is 
equicontinuous for any 
convergent sequence $\{ r_i\}_i$. 
Then there exists a subsequence 
$\{ (P_{i_k},d_{i_k},\nu_{i_k},p_{i_k})\}_k$ such that 
\begin{align*}
(P_{i_k},d_{i_k},\nu_{i_k},p_{i_k}) \GmGH (P_\infty,d_\infty,\nu_\infty,p_\infty).
\end{align*}
\label{cptness 1-2}
\end{prop}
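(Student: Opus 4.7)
The plan is to adapt the construction in the proof of Proposition \ref{cptness 1} to the pointed noncompact setting via a localized diagonal argument. From the pointed mGH convergence (Remark \ref{equiv pmGH}) I fix Borel $\varepsilon_i$-isometries $\phi_i\colon B(p_i,R_i')\to B(p_\infty,R_i)$ with $\phi_i(p_i)=p_\infty$, $\varepsilon_i\to 0$, $R_i,R_i'\to\infty$, and vague convergence $(\phi_i)_*(\nu_i|_{B(p_i,R_i')})\to\nu_\infty|_{B(p_\infty,R_i)}$. For each $k\in\mathbb{N}$, the equicontinuity hypothesis applied to the constant sequence $r_i\equiv k+1$, together with the compactness of $G$, yields by the finite-cover argument used in the proof of Proposition \ref{cptness 1} a uniform bound
\[
D_k:=\sup_{i}\sup_{u\in B(p_i,k+1)}\sup_{\gamma\in G}d_i(u\gamma,u)<\infty.
\]
Consequently $\pi_i^{-1}(B(x_i,k))=B(p_i,k)\cdot G\subset B(p_i,k+D_k)$, which is relatively compact in $P_i$ by properness.

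Next I construct the $G$-action on $P_\infty$ along the lines of the proof of Proposition \ref{cptness 1}. I choose countable dense subsets $\{u^{(\alpha)}\}_\alpha\subset P_\infty$ (possible since proper metric spaces are separable) and $\{\gamma^{(\beta)}\}_\beta\subset G$, and lift each $u^{(\alpha)}$ to $u_i^{(\alpha)}\in P_i$ with $\phi_i(u_i^{(\alpha)})\to u^{(\alpha)}$; the $u_i^{(\alpha)}$ may be chosen in $B(p_i,R_\alpha)$ for some $R_\alpha$ depending only on $d_\infty(u^{(\alpha)},p_\infty)$. Then each point $\phi_i(u_i^{(\alpha)}\gamma^{(\beta)})$ lies in a fixed ball $B(p_\infty,\tilde R_\alpha)$, and the relative compactness of such balls combined with the diagonal argument from Proposition \ref{cptness 1} produces a subsequence $\{i_k\}$ along which $\phi_{i_k}(u_{i_k}^{(\alpha)}\gamma^{(\beta)})$ converges for every $(\alpha,\beta)$. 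Setting $u^{(\alpha)}\gamma^{(\beta)}:=\lim_k\phi_{i_k}(u_{i_k}^{(\alpha)}\gamma^{(\beta)})$ and using the equicontinuity, I extend this continuously to an isometric $G$-action $P_\infty\times G\to P_\infty$; the local analogue of \eqref{ineq action 2}, applied within a fixed ball, yields $\phi_{i_k}(u_{i_k}\gamma)\to u\gamma$ whenever $\phi_{i_k}(u_{i_k})\to u$ lies in that ball.

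Finally, I verify the pointed asymptotically $G$-equivariant mGH convergence. For each fixed $R>0$ the restriction of $\phi_{i_k}$ to $\pi_{i_k}^{-1}(B(x_{i_k},R_{i_k}''))\subset B(p_{i_k},R+D_R)$, for appropriate $R_{i_k}''\to R$, produces by the estimate just established a Borel $\hat{\varepsilon}_k$-$G$-equivariant Hausdorff approximation onto $\pi_\infty^{-1}(B(x_\infty,R))$ with $\hat{\varepsilon}_k\to 0$; the vague convergence of the pushforward measures on these preimages is inherited from the original vague convergence on the enclosing balls $B(p_\infty,R+D_R)$. I expect the main technical obstacle to be the $\hat{\varepsilon}_k$-surjectivity onto $\pi_\infty^{-1}(B(x_\infty,R))$: given a point of the target, I must use the submetry property (Proposition \ref{quotient metric}) to lift it into $B(p_\infty,R)$, approximate it in $P_{i_k}$ via $\phi_{i_k}$, and then saturate by the newly constructed $G$-action, with careful bookkeeping of orbits crossing the boundary of $B(x_\infty,R)$ and of the growth of $D_R$ as $R$ varies.
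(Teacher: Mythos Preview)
Your approach is essentially the paper's: localize to (pre)compact pieces, invoke the machinery of Proposition~\ref{cptness 1}, and diagonalize. The paper's argument is terser in that it does not rerun the construction inside Proposition~\ref{cptness 1}; it simply picks an exhausting sequence $r_m\to\infty$ with $r_{m,i}\to r_m$, applies Proposition~\ref{cptness 1} as a black box to the saturated balls
\[
\pi_i^{-1}\bigl(B(\bar p_i,r_{m,i})\bigr)\ \GH\ \pi_\infty^{-1}\bigl(B(\bar p_\infty,r_m)\bigr)
\]
for each $m$, and then diagonalizes over $m$ to obtain a single subsequence and a coherent $G$-action on all of $P_\infty$. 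Your version, which reproduces the dense-set-plus-diagonal construction globally, is equivalent but longer.

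One point deserves care. The uniform orbit-diameter bound $D_k<\infty$ does not follow from equicontinuity alone, and there is no ``finite-cover argument'' to that effect in the proof of Proposition~\ref{cptness 1}: if $G$ is finite then $G\times G$ is discrete, equicontinuity is vacuous, and two-point orbits at distance $i$ give $D_k=\infty$. The paper's proof hides the same issue inside the unproved assertion that the saturated balls GH-converge. In the situations where the proposition is actually used (compact Lie $G$ with fibres isometric to $(G,\sigma)$, as in Theorems~\ref{precompact} and~\ref{precompact noncpt}), one has the trivial bound $D_k\le\operatorname{diam}(G,\sigma)$ and both arguments go through without difficulty; your boundary-bookkeeping concerns are then handled exactly as you outline.
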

\begin{proof}
Take a sequence $\{ r_m\}_m$ 
such that $\lim_{m\to\infty} r_m=\infty$ 
and sequences $\{ r_{m,i}\}_i$ such that 
$\lim_{i\to\infty}r_{m,i} = r_m$. 
Denote by $\pi_i\colon P_i\to P_i/G$ the 
quotient maps. 
By applying Proposition \ref{cptness 1} 
to 
\begin{align*}
\pi_i^{-1}(B(\bar{p}_i,r_{m,i})) \GH 
\pi_\infty^{-1}(B(\bar{p}_\infty,r_m))
\end{align*}
inductively and combining with the diagonal argument, 
we have the assertion. 
\end{proof}

\begin{rem}
\normalfont
Kasue showed the precompactness of the 
families of 
the principal $G$-bundles equipped with the spectral distances in 
\cite{Kasue2011}, 
and he discussed the relation between the spectral distances 
and the Riemannian distances in \cite{Kasue2002}. 
\end{rem}

Any smooth closed 
Riemannian manifold $(X,g)$ can be regarded 
as a metric measure space by the Riemannian distance 
$d_g$ and the measure $\frac{\mu_g}{\mu_g(X)}$, 
where $\mu_g$ is the 
Riemannian measure. 
For a principal $G$-bundle $\pi:P\to X$ and a $G$-connection 
$A$ on $P$, let 
$| F^A |^2(x):= \langle F^A(x),F^A(x)\rangle_{g_x\otimes \sigma}$, 
where $\sigma$ is a ${\rm Ad}_G$-invariant metric on $G$. 
Similarly we can define $| (d^{\nabla^{A}})^* F^{A} |^2(x)$. 
\begin{thm}
Let $\{ (X_i,g_i)\}_{i=0}^\infty$ be a sequence of closed Riemannian manifolds 
with 
\begin{align*}
{\rm dim}\, X_i = n,\, 
{\rm Ric}_{g_i} \ge \kappa g_i,\, 
{\rm diam}\, X_i \le D
\end{align*}
for some constants $n,\kappa,D$ independent of $i$, 
$G$ be a compact Lie group and 
$\pi_i:P_i\to X_i$ be principal $G$-bundle with 
$G$-connection $A_i$ 
satisfying 
\begin{align*}
\sup_{x\in X_i} | (d^{\nabla^{A_i}})^* F^{A_i} |(x) < N, \quad
\sup_{x\in X_i} | F^{A_i} |(x) < N
\end{align*}
for a constant $N>0$ independent of $i$. 
Then there exists a subsequence $\{ i_k\}_k$ and 
a metric measure space $(P_\infty,d_\infty,\nu_\infty)$ with the isometric 
$G$-action such that 
\begin{align*}
(P_{i_k},h(g_{i_k},A_{i_k},\sigma)) \GmGH (P_\infty,d_\infty,\nu_\infty),\quad 
(X_{i_k},g_{i_k}) \mGH (P_\infty/G,\bar{d}_\infty,\bar{\nu}_\infty)
\end{align*}
where $h(g_i,A_i,\sigma)$ is the metric on $P_i$ defined by 
\eqref{def principal metric}. 
\label{precompact}
\end{thm}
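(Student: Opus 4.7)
The plan is to reduce the assertion to Gromov's precompactness theorem applied to the total spaces $(P_i, h_i)$, and then invoke the results of Section \ref{G-str on metric space} to promote the limit to a $G$-equivariant one and transfer the convergence to the base.

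First I verify uniform geometric bounds on $(P_i, h_i)$. The dimension is constant, $\dim P_i = n + \dim G$. Since $\pi_i\colon (P_i, h_i) \to (X_i, g_i)$ is a Riemannian submersion whose fibers are isometric to $(G, \sigma)$,
\begin{align*}
{\rm diam}(P_i, h_i) \le {\rm diam}(X_i, g_i) + {\rm diam}(G, \sigma) \le D + {\rm diam}(G, \sigma),
\end{align*}
which is bounded independently of $i$. For the Ricci curvature I use the three formulas for $\hat{{\rm Ric}}$ derived at the end of Section \ref{curvature}: the horizontal block ${\rm Ric}_{jk} - \tfrac{1}{2}(F^*F)_{jk}^{\alpha\beta}\sigma_{\alpha\beta}$, the mixed block $\tfrac{1}{2}\{(d^\nabla)^*F\}_j^\mu \sigma_{\beta\mu}$, and the vertical block combining a contraction of $F^*F$ with the Lie-algebra term $\tfrac{1}{4}\sigma^{\alpha\delta}\sigma([e_\alpha,e_\beta],[e_\delta,e_\mu])$. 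Because $\|F^{A_i}\|_{L^\infty} \le N$, $\|(d^{\nabla^{A_i}})^* F^{A_i}\|_{L^\infty} \le N$, and the structure constants of the compact group $G$ are universally bounded, all these tensors are controlled by a constant depending only on $n, N, G, \sigma$. Combining this with ${\rm Ric}_{g_i} \ge \kappa g_i$ and applying Cauchy--Schwarz to absorb the off-diagonal mixed block produces a uniform lower bound $\hat{{\rm Ric}}_{h_i} \ge \kappa' h_i$ for some $\kappa' \in \R$ depending only on $\kappa, N, G, \sigma$.

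With uniform dimension, diameter, and Ricci lower bound in hand, Gromov's precompactness theorem supplies a subsequence such that $(P_{i_k}, d_{h_{i_k}}, \nu_{i_k}) \mGH (P_\infty, d_\infty, \nu_\infty)$, where $\nu_{i_k}$ denotes the normalized Riemannian measure of $h_{i_k}$. To upgrade this to $G$-equivariant convergence via Proposition \ref{cptness 1-1}, I must check that the family $\{F_{i,u}(\gamma, \gamma')=d_{h_i}(u\gamma, u\gamma')\}_{i,u}$ is equicontinuous. Since every $G$-orbit in $P_i$ is totally geodesically embedded and isometric to $(G,\sigma)$, the distance along any orbit realizes the orbit distance, so $d_{h_i}(u\gamma, u\gamma') \le d_\sigma(\gamma, \gamma')$; combined with the triangle inequality this yields the uniform Lipschitz estimate
\begin{align*}
|F_{i,u}(\gamma_1,\gamma_1') - F_{i,u}(\gamma_2,\gamma_2')| \le d_\sigma(\gamma_1,\gamma_2) + d_\sigma(\gamma_1',\gamma_2'),
\end{align*}
hence equicontinuity. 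Proposition \ref{cptness 1-1} then endows $P_\infty$ with an isometric $G$-action and gives $(P_{i_k}, h_{i_k}) \GmGH (P_\infty, d_\infty, \nu_\infty)$ along a further subsequence.

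Finally, since each $\pi_i\colon P_i \to X_i$ is a smooth principal $G$-bundle it admits a Borel section (glue local smooth sections by a Borel partition of $X_i$), so Corollary \ref{bundle to base} transfers the convergence to the base and yields $(X_{i_k}, g_{i_k}) \mGH (P_\infty/G, \bar d_\infty, \bar\nu_\infty)$. The main obstacle is the Ricci step: because the off-diagonal block $\hat{{\rm Ric}}(\hat{\partial}_j, e_\beta^\sharp)$ is nonzero whenever $(d^\nabla)^* F \neq 0$, one has to work with the full quadratic form on $TP$ and use Cauchy--Schwarz to absorb these mixed terms into the diagonal blocks; this is precisely where the hypothesis $\|(d^{\nabla^A})^* F^A\|_{L^\infty} \le N$ of Theorem \ref{main 1-1} enters.
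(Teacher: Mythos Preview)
Your proof is correct and follows essentially the same route as the paper: uniform Ricci lower bound on $(P_i,h_i)$ from the curvature formulas of Section~\ref{curvature}, Gromov precompactness, equicontinuity of $\{F_{i,u}\}$ via the orbit estimate $d_{h_i}(u\gamma,u\gamma')\le d_\sigma(\gamma,\gamma')$, Proposition~\ref{cptness 1-1} for the $G$-equivariant limit, and Corollary~\ref{bundle to base} together with a Borel section to descend to the base. You make the diameter bound and the Cauchy--Schwarz absorption of the mixed Ricci block explicit, which the paper leaves implicit, but there is no substantive difference in approach.
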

\begin{proof}
By the assumption, 
the Ricci curvatures of 
$h(g_i,A_i,\sigma)$ are bounded below by 
the constant independent of $i$, 
hence, 
$\{(P_i,h(g_i,A_i,\sigma))\}_i$
is precompact with respect to the measured 
Gromov-Hausdorff topology. 
Moreover, the functions $F_{i,u}$ in the 
assumption of Proposition \ref{cptness 1} 
satisfies $F_{i,u}(\gamma, \gamma')
\le d_G(\gamma, \gamma')$, 
where $d_G$ is the distance 
induced by $\sigma$, accordingly 
$\{ F_{i,u}\}_{i,u}$ is equicontinuous. 
Therefore, by Proposition 
\ref{cptness 1}, there exists a convergent 
subsequence of 
$\{(P_i,h(g_i,A_i,\sigma))\}_i$ with respect to 
the asymptotically $G$-equivariant measured 
Gromov-Hausdorff topology. 
Finally, to apply Corollary \ref{bundle to base}, 
it suffices to show that all 
principal $G$-bundles $\pi\colon P\to X$ 
over a compact smooth manifold 
has a Borel section. 
Since $X$ is compact, 
there is a finite open covering 
$\{ U_\alpha\}_{\alpha=1}^N$  
of $X$ such that 
$P|_{U_\alpha}\to U_\alpha$ are trivial bundles. 
Put $U_{\alpha}^0:=U_{\alpha}$, 
$U_{\alpha}^1:=X\setminus U_{\alpha}$ and 
$V_\sigma:=\bigcap_{\alpha=1}^N 
U_{\alpha}^{\sigma(\alpha)}$ for a map 
$\sigma\colon \{ 1,\ldots,N\}\to \{ 0,1\}$. 
Then we have $X=\bigsqcup_\sigma V_\sigma$ 
and every 
$P|_{V_\sigma}\to V_\sigma$ has a continuous section $s_\sigma\colon 
V_\sigma\to P|_{V_\sigma}$. 
Since $V_\sigma$ are Borel sets, we have the 
Borel section of $P\to X$ by gluing 
$\{ s_\sigma\}_\sigma$. 
\end{proof}

Similarly we have the followings by Proposition 
\ref{cptness 1-2}. 

\begin{thm}
Let $\{ (X_i,g_i)\}_{i=0}^\infty$ be a sequence of complete Riemannian manifolds 
with 
\begin{align*}
{\rm dim}\, X_i = n,\, 
{\rm Ric}_{g_i} \ge \kappa g_i
\end{align*}
for some constants $n,\kappa$ independent of $i$, 
$G$ be a compact Lie group and 
$\pi_i:P_i\to X_i$ be principal $G$-bundle with 
$G$-connection $A_i$ 
satisfying 
\begin{align*}
\sup_{x\in X_i} | (d^{\nabla^{A_i}})^* F^{A_i} |(x) < N, \quad
\sup_{x\in X_i} | F^{A_i} |(x) < N
\end{align*}
for a constant $N>0$ independent of $i$. 
Let $p_i\in P_i$. 
Then there exists a subsequence $\{ i_k\}_k$ and 
a pointed metric measure space $(P_\infty,d_\infty,\nu_\infty,p_\infty)$ with the isometric 
$G$-action such that 
\begin{align*}
(P_{i_k},h(g_{i_k},A_{i_k},\sigma),p_{i_k}) &\GmGH (P_\infty,d_\infty,\nu_\infty,p_\infty),\\
(X_{i_k},g_{i_k},\bar{p}_{i_k}) &\mGH (P_\infty/G,\bar{d}_\infty,\bar{\nu}_\infty,\bar{p}_\infty)
\end{align*}
where $h(g_i,A_i,\sigma)$ is the metric on $P_i$ defined by 
\eqref{def principal metric}. 
\label{precompact noncpt}
\end{thm}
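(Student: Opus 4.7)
The plan is to follow the same scheme as Theorem \ref{precompact}, replacing the compactness arguments with their pointed/proper counterparts. First, I would observe that the curvature formulas in Section \ref{curvature} express $\hat{{\rm Ric}}$ of $h(g_i,A_i,\sigma)$ in terms of ${\rm Ric}_{g_i}$, $F^{A_i}$, $(d^{\nabla^{A_i}})^* F^{A_i}$, and the fixed bi-invariant metric $\sigma$. Under the uniform bounds ${\rm Ric}_{g_i}\ge\kappa g_i$, $\|F^{A_i}\|_{L^\infty}\le N$, $\|(d^{\nabla^{A_i}})^*F^{A_i}\|_{L^\infty}\le N$, these formulas give a uniform lower bound $\hat{{\rm Ric}}\ge \kappa' h(g_i,A_i,\sigma)$ for some $\kappa'=\kappa'(\kappa,N,\sigma,n,G)$ independent of $i$. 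The manifolds $(P_i,h(g_i,A_i,\sigma))$ are complete since the $X_i$ are complete and $G$ is compact, so Gromov's precompactness theorem in the pointed measured Gromov--Hausdorff topology yields a subsequence and a pointed limit $(P_\infty,d_\infty,\nu_\infty,p_\infty)$, which is automatically a proper metric measure space.

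Next I would upgrade this convergence to the $G$-equivariant version using Proposition \ref{cptness 1-2}. The hypothesis there requires that for every bounded sequence of radii $\{r_i\}$, the family $\{F_{i,u}\}_{i,u\in B(p_i,r_i)}$ is equicontinuous. Since each $G$-orbit in $P_i$ is totally geodesic and isometric to $(G,\sigma)$, we have $F_{i,u}(\gamma,\gamma')=d_i(u\gamma,u\gamma')\le d_G(\gamma,\gamma')$, where $d_G$ is the fixed distance induced by $\sigma$; this bound is uniform in $i$ and $u$, hence equicontinuity holds trivially. Proposition \ref{cptness 1-2} then produces an isometric $G$-action on $P_\infty$ such that, after passing to a further subsequence,
\begin{align*}
(P_{i_k},h(g_{i_k},A_{i_k},\sigma),p_{i_k}) \GmGH (P_\infty,d_\infty,\nu_\infty,p_\infty).
\end{align*}

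For the convergence of the quotient sequence, I would invoke Corollary \ref{bundle to base pt}. Its hypotheses require: existence of Borel sections for each $P_i\to X_i$, properness of the quotient maps and of $X_i,X_\infty$, and local finiteness of $\nu_\infty$. Borel sections exist by the same argument as in the proof of Theorem \ref{precompact}: each $X_i$ is a smooth manifold, and the trivializations over an open cover can be patched along Borel pieces obtained as intersections of $U_\alpha$ and their complements. In the noncompact case one needs a countable, locally finite trivializing cover, which exists by paracompactness; the same Boolean trick turns it into a Borel partition. Properness of $\pi_i$ follows from compactness of $G$ together with properness of $X_i$, and the preceding lemma in the excerpt gives properness of $P_i$ and hence of the limit $P_\infty$; local finiteness of $\nu_\infty$ on compact sets is automatic from the definition of metric measure space. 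Thus Corollary \ref{bundle to base pt} applies and gives
\begin{align*}
(X_{i_k},g_{i_k},\bar{p}_{i_k}) \mGH (P_\infty/G,\bar{d}_\infty,\bar{\nu}_\infty,\bar{p}_\infty).
\end{align*}

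The main obstacle I foresee is bookkeeping rather than a genuine mathematical difficulty: one must carefully check that the Borel-section construction, the properness of $\pi_\infty$, and the local finiteness of $\nu_\infty$ all survive the passage to the pointed, noncompact setting so that Corollary \ref{bundle to base pt} can legitimately be invoked. Once these checks are in place, the argument is a direct combination of the Ricci lower bound computed in Section \ref{curvature}, Gromov's precompactness, Proposition \ref{cptness 1-2}, and Corollary \ref{bundle to base pt}, in the same spirit as Theorem \ref{precompact}.
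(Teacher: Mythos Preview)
Your proposal is correct and follows exactly the approach the paper itself takes: the paper dispatches this theorem in a single line (``Similarly we have the followings by Proposition \ref{cptness 1-2}''), and your expansion---Ricci lower bound from Section \ref{curvature}, Gromov precompactness in the pointed setting, equicontinuity via $F_{i,u}(\gamma,\gamma')\le d_G(\gamma,\gamma')$, Proposition \ref{cptness 1-2}, then Corollary \ref{bundle to base pt}---is precisely the intended unpacking of ``similarly to Theorem \ref{precompact}''.
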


\section{Convergence of eigenfunctions}\label{sec conv}
Any closed Riemannian manifold $(X,g)$ can be canonically 
regarded as the metric measure space $(X,d_g,\frac{\mu_g}{\mu_g(X)})$, 
where $d_g$ is the Riemannian distance of $g$ and 
$\mu_g$ is the Riemannian measure. 
Denote by $\mathcal{M}(n,\kappa,D)$ 
the set consisting of 
isometric classes of metric measure spaces 
coming from closed Riemannian manifolds 
$(X,g)$ with 
\begin{align*}
{\rm dim}\, X = n,\, 
{\rm Ric}_{g} \ge \kappa g,\, 
{\rm diam}\, X \le D, 
\end{align*}
and let $\overline{\mathcal{M}(n,\kappa,D)}$ be the 
closure with respect to the measured Gromov-Hausdorff 
distance. 

We denote by $\mathcal{M}(n,\kappa)$ 
the set consisting of pointed metric measure spaces 
$(X,d_g,\frac{\mu_g}{\mu_g(B(p,1))},p)$, 
where $(X,g)$ is a complete Riemannian manifold with 
\begin{align*}
{\rm dim}\, X = n,\, 
{\rm Ric}_{g} \ge \kappa g,
\end{align*}
$p\in X$ and $B(p,1)$ is the geodesic ball centered at $p$ 
of radius $1$. 
Let $\overline{\mathcal{M}(n,\kappa)}$ be the 
closure with respect to the pointed measured Gromov-Hausdorff 
distance.

\subsection{Convergence of spectral structures}\label{kuwae shioya}
In \cite{KuwaeShioya2003}, 
Kuwae and Shioya introduced the notion of 
spectral structures for the Laplacian which enabled us 
to treat the convergence of eigenvalues in the systematic way. 
In this subsection we review the framework developed 
in \cite{KuwaeShioya2003}. 

Let $H_i$ be Hilbert spaces over $\mathbb{K}=\R$ 
or $\C$ 
for $i\in\Z_{\ge 0}\cup\{ \infty \}$, $\mathcal{C}\subset H_\infty$ 
be a dense subspace and 
$\Phi_i\colon \mathcal{C}\to H_i$ be linear operators 
which satisfy 
\begin{align}
\lim_{i\to \infty}\| \Phi_i(u)\|_{H_i}=\| u\|_{H_\infty}
\label{conv str}
\end{align}
for any $u\in\mathcal{C}$. 

\begin{definition}[\cite{KuwaeShioya2003}]
\normalfont
Let $u_i\in H_i$ for $i\in\Z_{\ge 0}\cup\{ \infty \}$. 
\begin{itemize}
\setlength{\parskip}{0cm}
\setlength{\itemsep}{0cm}
 \item[(1)] A sequence $\{ u_i\}_i$ {\it converges to $u_\infty$ strongly} as $i\to \infty$ 
if there exists a sequence $\{ \tilde{u}_k\}_k\subset H_\infty$ tending to 
$u_\infty$ such that 
\begin{align*}
\lim_{k\to \infty}\limsup_{i\to \infty}\| \Phi_i(\tilde{u}_k) - u_i\|_{H_i} = 0.
\end{align*}
 \item[(2)] A sequence $\{ u_i\}_i$ {\it converges to $u_\infty$ weakly} as $i\to \infty$ 
if \begin{align*}
\lim_{i\to \infty}\langle u_i,v_i\rangle_{H_i} = 0
\end{align*}
holds for any $( v_i)_i\in \prod_{i\in \Z\cup\{ \infty\}} H_i$ such that 
$v_i\to v_\infty$ strongly. 
\end{itemize}
\label{def spec}
\end{definition}

For a Hilbert space $H$, 
let $A\colon \mathcal{D}(A)\to H$ be a self-adjoint 
linear operator on $H$, where $\mathcal{D}(A)$ is the domain 
of $A$, and suppose $\mathcal{E}$ 
is given by $\mathcal{E}(u)
:=\langle Au, u\rangle_{H}$ 
for $u\in \mathcal{D}(A)$ and $\mathcal{E}(u):=\infty$ 
for $u\in H\backslash\mathcal{D}(A)$. 
Moreover we assume that $\mathcal{E}$ is 
{\it closed}, namely, 
$\mathcal{D}(A)$ is complete 
with respect to the norm 
defined by $\|u\|_{\mathcal{E}}:=\sqrt{\| u\|_H^2+\mathcal{E}(u)}$. 
The spectral structure $\Sigma$ generated by $A$ is defined by 
\begin{align*}
\Sigma := (A,\mathcal{E},E,\{ T_t\}_{t\ge 0}, \{ R_\zeta\}_{\zeta\in\rho(A)}), 
\end{align*}
where $E$ is the spectral measure of $A$, 
$T_t:=e^{-tA}$, $R_\zeta=(\zeta-A)^{-1}$ and 
$\rho(A)$ is the resolvent set of $A$.

\begin{definition}[\cite{KuwaeShioya2003}]
\normalfont
A sequence of closed quadratic forms 
$\{ \mathcal{E}_i\colon H_i \to \overline{\R}\}_i$ 
{\it Mosco converges to 
$\mathcal{E}_\infty\colon H_\infty \to \overline{\R}$ 
} as $i\to \infty$ 
if 
\begin{itemize}
\setlength{\parskip}{0cm}
\setlength{\itemsep}{0cm}
 \item[(1)] $\mathcal{E}_\infty(u_\infty)
\le \liminf_{i\to\infty}\mathcal{E}_i(u_i)$ for any $\{ u_i\}_i$
with $u_i\to u_\infty$ weakly,
 \item[(2)] for any $u_\infty\in H_\infty$ there exists $\{ u_i\}_i$ 
strongly converging to $u_\infty$ such that 
$\mathcal{E}_\infty(u_\infty) = \lim_{i\to\infty}\mathcal{E}_i(u_i)$. 
\end{itemize}
Moreover, 
$\{ \mathcal{E}_i\}_i$ 
{\it compactly converges to 
$\mathcal{E}_\infty$ 
} as $i\to \infty$ 
if 
\begin{itemize}
\setlength{\parskip}{0cm}
\setlength{\itemsep}{0cm}
 \item[(3)] $\{ \mathcal{E}_i\}_i$ 
Mosco converges to 
$\mathcal{E}_\infty$ as $i\to \infty$, 
 \item[(4)] for any $\{ u_i\}_i$ with 
$\limsup_{i\to\infty}(\| u_i\|_{H_i}^2 + \mathcal{E}_i(u_i)) < \infty$, 
there exists a strongly convergent subsequence. 
\end{itemize}
We write 
$\mathcal{E}_i\stackrel{{\rm M}}{\to}\mathcal{E}_\infty$ and 
$\mathcal{E}_i\stackrel{{\rm cpt}}{\to}\mathcal{E}_\infty$, respectively.
\label{def spec2}
\end{definition}

\begin{definition}[\cite{KuwaeShioya2003}]
\normalfont
Let $A_i$ be a self-adjoint nonnegative operator on $H_i$ 
and $\Sigma_i$ be the spectral structure generated by $A_i$. 
Then 
{\it $\{ \Sigma_i\}_i$ strongly converges to 
$\Sigma_\infty$ 
(resp. $\{ \Sigma_i\}_i$ compactly converges to 
$\Sigma_\infty$) 
as $i\to\infty$} if $\mathcal{E}_i\Mosco 
\mathcal{E}_\infty$ 
(resp. $\mathcal{E}_i\cpt \mathcal{E}_\infty$) 
as $i\to\infty$. 
\end{definition}
The authors of \cite{KuwaeShioya2003} 
have shown that the compact convergence of 
$\{ \mathcal{E}_i\}_i$ is 
equivalent to the certain convergence of the other materials 
consisting of $\Sigma_i$. 
See \cite[Section 2.6]{KuwaeShioya2003} for the details.

The definitions of the notions in 
Definition \ref{def spec} and \ref{def spec2} 
depend on 
the choice of $\{ \Phi_i\}_i$ with 
\eqref{conv str}, however, 
we can replace it with other 
$\{ \hat{\Phi}_i\colon \hat{\mathcal{C}}\to H_i\}_i$ 
by the following lemma. 
\begin{lem}
Let $\hat{\mathcal{C}}\subset \mathcal{C}$ be 
dense subspaces of $H_\infty$. 
If $\Phi_i\colon \mathcal{C}\to H_i$ and 
$\hat{\Phi}_i\colon \hat{\mathcal{C}}\to H_i$ 
satisfy \eqref{conv str} and 
$\lim_{i\to\infty} \| \Phi_i(u) 
- \hat{\Phi}_i(u)\|_{H_i}=0$ 
for any $u\in\hat{\mathcal{C}}$, 
then $u_i\to u_\infty$ strongly with respect to 
$\{ \Phi_i\}_i$ iff 
$u_i\to u_\infty$ strongly with respect to 
$\{ \hat{\Phi}_i\}_i$. 
\label{lem conv str}
\end{lem}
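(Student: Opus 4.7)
The plan is to establish the equivalence by a direct triangle-inequality argument in each direction, using the hypothesis $\|\Phi_i(u) - \hat{\Phi}_i(u)\|_{H_i} \to 0$ for every $u \in \hat{\mathcal{C}}$ together with the density of $\hat{\mathcal{C}}$ in $H_\infty$. The two directions are not symmetric: the converse implication requires density of $\hat{\mathcal{C}}$ to replace approximating vectors from $\mathcal{C}$ by ones from $\hat{\mathcal{C}}$, and this is where the real work lies.

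For the implication ``strongly with respect to $\{\hat{\Phi}_i\}$ implies strongly with respect to $\{\Phi_i\}$'', I would take the witnessing sequence $\{\tilde{u}_k\} \subset \hat{\mathcal{C}}$ from the definition of strong convergence for $\hat{\Phi}_i$ and note that since $\hat{\mathcal{C}} \subset \mathcal{C}$ the same $\tilde{u}_k$ are admissible for $\Phi_i$. The triangle inequality
\begin{align*}
\|\Phi_i(\tilde{u}_k) - u_i\|_{H_i} \le \|\Phi_i(\tilde{u}_k) - \hat{\Phi}_i(\tilde{u}_k)\|_{H_i} + \|\hat{\Phi}_i(\tilde{u}_k) - u_i\|_{H_i}
\end{align*}
combined with the assumption $\|\Phi_i(\tilde{u}_k) - \hat{\Phi}_i(\tilde{u}_k)\|_{H_i} \to 0$ (valid because $\tilde{u}_k \in \hat{\mathcal{C}}$) immediately yields $\lim_k \limsup_i \|\Phi_i(\tilde{u}_k) - u_i\|_{H_i} = 0$, as required.

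The converse is the main point. Starting from $\{\tilde{u}_k\} \subset \mathcal{C}$ witnessing strong convergence for $\Phi_i$, I would invoke density of $\hat{\mathcal{C}}$ in $H_\infty$ to pick $\hat{u}_k \in \hat{\mathcal{C}}$ with $\|\hat{u}_k - \tilde{u}_k\|_{H_\infty} < 1/k$, so $\hat{u}_k \to u_\infty$ in $H_\infty$. Using the three-term decomposition
\begin{align*}
\|\hat{\Phi}_i(\hat{u}_k) - u_i\|_{H_i}
&\le \|\hat{\Phi}_i(\hat{u}_k) - \Phi_i(\hat{u}_k)\|_{H_i}\\
&\quad + \|\Phi_i(\hat{u}_k) - \Phi_i(\tilde{u}_k)\|_{H_i}
+ \|\Phi_i(\tilde{u}_k) - u_i\|_{H_i},
\end{align*}
the first term tends to $0$ as $i\to\infty$ by hypothesis, applied to $\hat{u}_k \in \hat{\mathcal{C}}$; the middle term is handled by linearity of $\Phi_i$ combined with the norm convergence \eqref{conv str} applied to $\hat{u}_k - \tilde{u}_k \in \mathcal{C}$, which gives $\lim_i \|\Phi_i(\hat{u}_k) - \Phi_i(\tilde{u}_k)\|_{H_i} = \|\hat{u}_k - \tilde{u}_k\|_{H_\infty} < 1/k$; and the third term has vanishing double limit by the original strong-convergence assumption. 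Taking $\limsup_i$ and then $k \to \infty$ concludes the proof.

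The only conceptually subtle point is the middle term in the converse direction, and it is resolved by linearity of $\Phi_i$, which is part of the setup. No additional machinery beyond the definition and \eqref{conv str} is needed.
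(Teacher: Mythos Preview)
Your argument is correct and is essentially the paper's proof. The only difference is in the converse direction: the paper invokes the standard fact that strong convergence is independent of the choice of approximating sequence $\{\tilde{u}_k\}$ (hence one may take $\tilde{u}_k \in \hat{\mathcal{C}}$ by density and repeat the first direction verbatim), whereas your three-term decomposition with the linearity/\eqref{conv str} step for the middle term amounts to verifying this independence directly, making your version slightly longer but self-contained.
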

\begin{proof}
Suppose $u_i\to u_\infty$ strongly with respect to 
$\{ \hat{\Phi}_i\}_i$, that is, there is a sequence 
$\{ \tilde{u}_k\}_k\subset \hat{\mathcal{C}}$ 
tending to $u_\infty$ such that 
\begin{align*}
\lim_{k\to\infty}\limsup_{i\to\infty}\| \hat{\Phi}_i(\tilde{u}_k) - u_i\|_{H_i}=0.
\end{align*} 
Then we have 
\begin{align*}
\limsup_{i\to\infty}\| \Phi_i(\tilde{u}_k) 
- u_i\|_{H_i}
&\le \limsup_{i\to\infty}\| \hat{\Phi}_i(\tilde{u}_k) - u_i\|_{H_i}\\
&\quad\quad + \limsup_{i\to\infty}\| \Phi_i(\tilde{u}_k) - \hat{\Phi}_i(\tilde{u}_k)\|_{H_i}\\
&=\limsup_{i\to\infty}\| \hat{\Phi}_i(\tilde{u}_k) - u_i\|_{H_i}
\end{align*}
for all $\tilde{u}_k$, hence 
$u_i\to u_\infty$ strongly with respect to 
$\{ \Phi_i\}_i$. 
The converse follows by the similar argument since 
the definition of the strong convergence of $\{ u_i\}$ 
does not depend on the choice of $\{ \tilde{u}_k\}_k$ tending to $u_\infty$. 
\end{proof}

Now let $(P_i,d_i,\nu_i) \in \overline{\mathcal{M}(n,\kappa,D)}$ 
and suppose that $\{ (P_i,d_i,\nu_i)\}_i$ measured Gromov-Hausdorff 
converges to 
$(P_\infty,d_\infty,\nu_\infty)$. 
Then there are Borel $\varepsilon_i$-isometries 
$\phi_i\colon P_i\to P_\infty$ such that 
$\varepsilon_i\to 0$ and ${\phi_i}_*\nu_i\to \nu_\infty$ as 
$i\to \infty$. 
Put $H_i:=L^2(P_i,\nu_i)$, 
$\mathcal{C}:=C(P_\infty)
=\{ f\colon P_\infty\to \R;\, f\mbox{ is continuous.}\}$, $\Phi_i(f):=f\circ\phi_i$ 
and $\Sigma_i$ be the spectral structure generated by $\Delta_{\nu_i}$.

\begin{thm}[\cite{Cheeger-Colding3}\cite{KuwaeShioya2003}]
If $(P_i,d_i,\nu_i) \in \overline{\mathcal{M}(n,\kappa,D)}$ and 
\begin{align*}
(P_i,d_i,\nu_i) \mGH
(P_\infty,d_\infty,\nu_\infty),
\end{align*}
then $\Sigma_i\cpt\Sigma_\infty$. 
\end{thm}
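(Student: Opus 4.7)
My plan is to verify the two defining conditions of compact convergence in the Kuwae–Shioya framework: Mosco convergence $\mathcal{E}_i \Mosco \mathcal{E}_\infty$ of the underlying Dirichlet energies, together with the uniform $L^2$-compactness condition (4) of Definition \ref{def spec2}. The natural reference family is $\Phi_i \colon C(P_\infty) \to L^2(P_i,\nu_i)$, $\Phi_i(f) := f\circ \phi_i$, where $\phi_i$ are the Borel $\varepsilon_i$-isometries witnessing mGH convergence. The compatibility condition \eqref{conv str} reduces to applying the vague convergence ${\phi_i}_*\nu_i \to \nu_\infty$ to $|f|^2$, and Lemma \ref{lem conv str} shows the induced notions of strong/weak convergence do not depend on the specific choice of $\phi_i$.

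For Mosco convergence I would appeal to Cheeger–Colding's identification of $\mathcal{E}_\infty$ with the canonical Cheeger energy on the limit space. The $\liminf$ inequality $\mathcal{E}_\infty(u_\infty) \le \liminf_i \mathcal{E}_i(u_i)$ follows from lower semi-continuity of the Cheeger energy under mGH convergence, valid on $\overline{\mathcal{M}(n,\kappa,D)}$ thanks to the uniform volume doubling (Bishop–Gromov) and the uniform $(1,2)$-Poincar\'e inequality, both consequences of ${\rm Ric}_{g_i} \ge \kappa g_i$. For the recovery sequence at a given $u_\infty \in \mathcal{D}(\mathcal{E}_\infty)$, I would first approximate $u_\infty$ by bounded Lipschitz functions $\tilde{u}_k$ on $P_\infty$ with $\mathcal{E}_\infty(\tilde{u}_k) \to \mathcal{E}_\infty(u_\infty)$ (density of Lipschitz functions in the Cheeger space), then transport them via $\Phi_i$. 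Because an $\varepsilon_i$-isometry almost preserves Lipschitz constants, a standard slope-comparison gives $\mathcal{E}_i(\Phi_i(\tilde{u}_k)) \to \mathcal{E}_\infty(\tilde{u}_k)$, and a diagonal extraction produces the required $\{u_i\}$ converging strongly with $\mathcal{E}_i(u_i) \to \mathcal{E}_\infty(u_\infty)$.

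The compactness condition (4) I would obtain from a uniform Rellich–Kondrachov-type embedding $W^{1,2} \hookrightarrow L^2$ on the class $\overline{\mathcal{M}(n,\kappa,D)}$: again the doubling and Poincar\'e inequalities—with constants depending only on $n, \kappa, D$—imply that any sequence $\{u_i\}$ with $\sup_i (\|u_i\|_{L^2}^2 + \mathcal{E}_i(u_i)) < \infty$ admits a strongly convergent subsequence. Concretely, cover each $P_i$ by finitely many small balls with uniformly controlled oscillation (via the Poincar\'e inequality), approximate $u_i$ by the associated step functions, and push forward via $\phi_i$; the uniform doubling controls the number of balls needed at each scale, and a standard diagonal/Arzel\`a–Ascoli argument produces the convergent subsequence along $\{\Phi_i\}$.

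The principal obstacle is not the reduction to the Kuwae–Shioya formalism, which is essentially formal once the two conditions are known, but rather the underlying analysis on Ricci limit spaces: constructing $\mathcal{E}_\infty$, proving density of Lipschitz functions in its domain, and establishing the lower semi-continuity of the Cheeger energy under mGH convergence. This is precisely the deep content of \cite{Cheeger-Colding3}, and once combined with \cite[Section 2.6]{KuwaeShioya2003} the assertion $\Sigma_i \cpt \Sigma_\infty$ follows, which is why the statement is recorded here as a direct citation.
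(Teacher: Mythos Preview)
The paper gives no proof of this theorem at all: it is stated as a black-box citation of \cite{Cheeger-Colding3} and \cite{KuwaeShioya2003}, and you correctly recognize this in your final paragraph. So there is nothing to compare your argument against; the ``paper's proof'' consists of the two references.

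That said, one step in your sketch would not go through as written. In the recovery-sequence part you propose to transport a Lipschitz function $\tilde u_k$ on $P_\infty$ via $\Phi_i(\tilde u_k)=\tilde u_k\circ\phi_i$ and then claim $\mathcal{E}_i(\Phi_i(\tilde u_k))\to\mathcal{E}_\infty(\tilde u_k)$ because ``an $\varepsilon_i$-isometry almost preserves Lipschitz constants.'' But $\phi_i$ is merely a Borel $\varepsilon_i$-isometry, so $\tilde u_k\circ\phi_i$ is only Borel; it satisfies $|\tilde u_k\circ\phi_i(x)-\tilde u_k\circ\phi_i(y)|\le L(d_i(x,y)+\varepsilon_i)$, which is neither Lipschitz nor even continuous, and in general $\Phi_i(\tilde u_k)\notin\mathcal{D}(\mathcal{E}_i)$, so $\mathcal{E}_i(\Phi_i(\tilde u_k))=\infty$. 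The actual construction of recovery sequences in \cite{Cheeger-Colding3} and \cite{KuwaeShioya2003} does not proceed by naive pullback; it uses more delicate tools (harmonic replacement, heat-kernel regularization, or the $\Gamma$-convergence machinery built on the segment inequality and Rellich compactness). Your identification of the two ingredients (Mosco $+$ asymptotic compactness) and of where the hard analysis lies is correct, but the specific mechanism you describe for item (2) of Definition \ref{def spec2} is the place where the sketch breaks down.
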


Let $(P_i,d_i,\nu_i,p_i) \in \overline{\mathcal{M}(n,\kappa)}$ 
and suppose that $\{ (P_i,d_i,\nu_i,p_i)\}_i$ 
pointed measured Gromov-Hausdorff 
converges to 
$(P_\infty,d_\infty,\nu_\infty,p_\infty)$. 
Let $\varepsilon_i,R_i,R'_i$ and 
$\phi_i\colon B(p_i,R'_i)\to B(p_\infty,R_i)$ be as 
in Remark \ref{equiv pmGH}. 
Put $H_i:=L^2(P_i,\nu_i)$, 
\begin{align*}
\mathcal{C}:=C_0(P_\infty)
=\{ f\in C(P_\infty);\, {\rm supp}(f)\mbox{ is compact.}\},
\end{align*}
\[ 
\Phi_i(f)(u):=
\left\{ 
\begin{array}{cc}
f\circ\phi_i(u) & u\in B(p_i,R'_i)\\
0 & u\notin B(p_i,R'_i)
\end{array}
\right.
\]
for $f\in \mathcal{C}$ 
and $\Sigma_i$ be the spectral structure generated by $\Delta_{\nu_i}$. 
\begin{thm}[\cite{KuwaeShioya2003}]
If $(P_i,d_i,\nu_i,p_i) \in \overline{\mathcal{M}(n,\kappa)}$ and 
\begin{align*}
(P_i,d_i,\nu_i,p_i) \mGH
(P_\infty,d_\infty,\nu_\infty,p_\infty),
\end{align*}
then $\Sigma_i\str\Sigma_\infty$. 
\end{thm}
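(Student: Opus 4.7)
The plan is to follow the Cheeger--Colding/Kuwae--Shioya strategy adapted to the pointed noncompact setting. By the general equivalence established in \cite{KuwaeShioya2003}, it suffices to prove Mosco convergence $\mathcal{E}_i \Mosco \mathcal{E}_\infty$ of the Cheeger--Dirichlet energies associated with the Laplacians $\Delta_{\nu_i}$; strong convergence of the spectral structures then follows automatically. So the task reduces to verifying conditions (1) and (2) of Definition \ref{def spec2} for the choice of $\Phi_i$ described just above the theorem.

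First I would check that the data $(\mathcal{C}, \{\Phi_i\})$ satisfy \eqref{conv str}. For $f \in C_0(P_\infty)$, once $i$ is large enough that $\mathrm{supp}(f) \subset B(p_\infty,R_i)$, the quantity $\|\Phi_i(f)\|_{H_i}^2 = \int |f\circ\phi_i|^2\, d\nu_i = \int |f|^2\, d({\phi_i}_*\nu_i)$, so vague convergence ${\phi_i}_*(\nu_i|_{B(p_i,R'_i)}) \to \nu_\infty|_{B(p_\infty,R_i)}$ yields $\|\Phi_i(f)\|_{H_i} \to \|f\|_{H_\infty}$. By Lemma \ref{lem conv str} any convenient dense subclass (e.g.\ compactly supported Lipschitz functions) may be used instead of $C_0(P_\infty)$.

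The heart of the argument is the Mosco convergence itself. For the lim-inf condition, given $u_i \to u_\infty$ weakly in the Kuwae--Shioya sense with $\liminf \mathcal{E}_i(u_i) < \infty$, I would use localization: for every $R > 0$, restrict to $B(p_i,R)$, invoke the uniform doubling and Poincaré inequality (both consequences of $\mathrm{Ric} \geq \kappa g$ together with Bishop--Gromov and Buser), and extract a weak limit of the minimal upper gradients that dominates $|\nabla u_\infty|$ on $B(p_\infty,R)$. Letting $R \to \infty$ recovers the full energy. For the recovery sequence, given $u_\infty \in \mathrm{Dom}(\mathcal{E}_\infty)$, I would first use the Cheeger--Colding good cutoff functions $\chi_R$ supported in $B(p_\infty,2R)$ with $\chi_R \equiv 1$ on $B(p_\infty,R)$ and $|\nabla \chi_R|, |\Delta \chi_R|$ bounded independently of $R$ to reduce to $u_\infty$ with compact support, then approximate such $u_\infty$ in the $\mathcal{E}_\infty$-norm by Lipschitz compactly supported functions $v$, and finally set $v_i := \Phi_i(v)$; the almost-isometry property of $\phi_i$, the vague convergence of measures, and the Lipschitz bound give $\|v_i\|_{H_i} \to \|v\|_{H_\infty}$ and $\mathcal{E}_i(v_i) \to \mathcal{E}_\infty(v)$, after which a diagonal extraction produces the recovery sequence.

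The main obstacle is the noncompact/pointed nature of the convergence: unlike the setting of $\overline{\mathcal{M}(n,\kappa,D)}$, here the total volumes are not uniformly bounded, eigenfunctions need not exist, and only $\str$ rather than $\cpt$ can be expected. Constructing a genuine recovery sequence for functions with non-compact support requires the Cheeger--Colding good cutoffs in an essential way, since one must trade off spatial truncation against energy error, and the $|\Delta\chi_R| \leq C/R^2$ bound is what prevents the cutoff error from spoiling the Dirichlet form in the limit $R \to \infty$. Once Mosco convergence is in hand, the remaining step of deducing $\Sigma_i \str \Sigma_\infty$ is a direct invocation of the Kuwae--Shioya equivalence theorem and involves no further geometric input.
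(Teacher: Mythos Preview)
The paper does not prove this theorem at all: it is stated with the attribution \cite{KuwaeShioya2003} and then used as a black box, with no argument given. So there is no ``paper's own proof'' to compare your proposal against; your task here was simply to cite the result.

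That said, since you have attempted a sketch, one step deserves a warning. In your recovery-sequence construction you write ``set $v_i := \Phi_i(v)$; the almost-isometry property of $\phi_i$ \ldots\ give[s] $\mathcal{E}_i(v_i) \to \mathcal{E}_\infty(v)$''. But $\Phi_i(v) = v\circ\phi_i$ is the composition of a Lipschitz function with a merely Borel $\varepsilon_i$-isometry $\phi_i$; in general $v\circ\phi_i$ has no reason to lie in the Sobolev domain $\mathcal{D}(\mathcal{E}_i)$, so $\mathcal{E}_i(v_i)$ may well be $+\infty$. The actual recovery argument in \cite{KuwaeShioya2003} (building on \cite{Cheeger-Colding3}) does not use the naive pullback; one instead constructs, on each $P_i$, a genuine Lipschitz function close to $v\circ\phi_i$ in $L^2$ with energy controlled by $\mathcal{E}_\infty(v)$ plus an error tending to zero. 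Your outline of the lim-inf side and of the localization via good cutoffs is in the right spirit, but the recovery step as you wrote it would not go through.
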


Suppose that 
the sequence $(P_i,d_i,\nu_i)$ have isometric $G$-actions, 
then $G$ acts on $H_i=L^2(P_i,\nu_i)$ isometrically. 
In the following subsections we are going to see 
that we can replace $\Phi_i$ by $G$-equivariant maps 
$\hat{\Phi}_i$ which satisfies the assumptions of 
Lemma \ref{lem conv str}.

\subsection{$G$-equivariant maps 
between Hilbert spaces}

In this subsection 
we assume that $G$ is a compact Lie group, 
$(P_i,d_i,\nu_i) \in \mathcal{M}(n,\kappa,D)$ 
and $(P_\infty,d_\infty,\nu_\infty)\in 
\overline{\mathcal{M}(n,\kappa,D)}$ 
have isometric $G$-actions and 
\begin{align*}
(P_i,d_i,\nu_i) \GmGH 
(P_\infty,d_\infty,\nu_\infty).
\end{align*}
Let $\phi_i,\varepsilon_i$ be as in 
Definition \ref{def GmGH}. 

For any $\gamma\in G$, 
put $R_\gamma(u):=u\gamma$, then 
we have 
$(R_\gamma)_*\nu_i = \nu_i$ since 
$\nu_i$ is the Riemannian measure and 
$G$ acts on $(P_i,d_i)$ isometrically. 
Since 
we have 
\begin{align*}
\left|\int_{P_\infty} f d(R_\gamma\circ\phi_i)_*\nu_i
- \int_{P_\infty} f d{\phi_i}_*\nu_i\right|
\le \sup_{u'\in P_i} 
\left| f(\phi_i(u')\gamma) -f(\phi_i(u'\gamma))\right| \nu_i(P_i),
\end{align*}
for any continuous function $f\colon P_\infty\to\R$ and 
the continuous functions on compact metric spaces are 
uniformly continuous, 
then the vague limit of $\{(R_\gamma\circ\phi_i)_*\nu_i\}_i$ 
and $\{ (\phi_i\circ R_\gamma)_*\nu_i\}_i$ should coincides, 
consequently we have 
\begin{align*}
\int_{P_\infty} f d{R_\gamma}_*\nu_\infty = \int_{P_\infty} f d\nu_\infty
\end{align*}
for any continuous function $f$ and $\gamma\in G$. 
By \cite{Cheeger-Colding1997}, $\nu_\infty$ and ${R_\gamma}_*\nu_\infty$ are 
Radon measures, hence we can see 
\begin{align*}
{R_\gamma}_*\nu_\infty = \nu_\infty
\end{align*}
for any $\gamma\in G$. 

Now, let 
\begin{align*}
(P_i,d_i,\nu_i,p_i) \GmGH 
(P_\infty,d_\infty,\nu_\infty,p_\infty) \in 
\overline{\mathcal{M}(n,\kappa)}.
\end{align*}
In this case we also have 
${R_\gamma}_*\nu_\infty = \nu_\infty$ 
for any $\gamma\in G$. 
Similarly as Remark \ref{equiv pmGH}, 
there exist $\varepsilon_i,R_i,R'_i$ 
such that $\lim_{i\to\infty}\varepsilon_i=0$ 
and $\lim_{i\to\infty}R_i = \lim_{i\to\infty}R'_i
=\infty$ and 
$\varepsilon_i$-$G$-equivariant 
Hausdorff approximation 
\begin{align*}
\phi_i\colon \pi_i^{-1}(B(\bar{p}_i,R'_i)\to 
\pi_\infty^{-1}(B(\bar{p}_\infty,R_i))
\end{align*}
such that $\phi_i(p_i) = p_\infty$. 
We also assume that ${\phi_i}_*\nu_i \to \nu_\infty$ 
vaguely. 
Put $H_i:=L^2(P_i,\nu_i)$, 
\begin{align*}
\mathcal{C}:=C_0(P_\infty)
=\{ f\in C(P_\infty);\, {\rm supp}(f)\mbox{ is compact.}\},
\end{align*}
\[ 
\Phi_i(f)(u):=
\left\{ 
\begin{array}{cc}
f\circ\phi_i(u) & u\in \pi_i^{-1}(B(\bar{p}_i,R'_i)\\
0 & u\notin \pi_i^{-1}(B(\bar{p}_i,R'_i)
\end{array}
\right.
\]
for $f\in \mathcal{C}$.
Define $\hat{\Phi}_i\colon \mathcal{C} 
\to H_i$ by 
\begin{align*}
\hat{\Phi}_i(f):=\frac{1}{\mu(G)}
\int_G R_{\gamma^{-1}}^*\Phi_i(R_\gamma^*f)
d\mu,
\end{align*}
where $\mu$ is one of the 
right-invariant measures on $G$.

\begin{prop}
Let $\Phi_i,\hat{\Phi}_i$ be as above. 
Then $\hat{\Phi}_i$ is 
$G$-equivariant and 
\begin{align*}
\lim_{i\to\infty}\| \Phi_i(f) - \hat{\Phi}_i(f)\|_{H_i}
=0
\end{align*}
holds for any $f\in\mathcal{C}$. 
\label{replace of Phi}
\end{prop}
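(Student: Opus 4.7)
The plan is to verify the two claims separately. For the $G$-equivariance of $\hat{\Phi}_i$, I would fix $g\in G$ and directly manipulate the defining integral using the composition rule $R_a^* R_b^*=R_{ab}^*$:
\begin{align*}
R_g^*\hat{\Phi}_i(f)
=\frac{1}{\mu(G)}\int_G R_g^* R_{\gamma^{-1}}^*\Phi_i(R_\gamma^* f)\,d\mu(\gamma)
=\frac{1}{\mu(G)}\int_G R_{g\gamma^{-1}}^*\Phi_i(R_\gamma^* f)\,d\mu(\gamma).
\end{align*}
Substituting $\tilde{\gamma}=\gamma g^{-1}$, so that $g\gamma^{-1}=\tilde{\gamma}^{-1}$ and $R_\gamma^* f=R_{\tilde{\gamma}g}^* f=R_{\tilde{\gamma}}^* R_g^* f$, and invoking right-invariance of $\mu$ to get $d\mu(\gamma)=d\mu(\tilde{\gamma})$, the integral rewrites precisely as $\hat{\Phi}_i(R_g^* f)$.

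For the norm convergence, I would begin with the identity
\begin{align*}
\Phi_i(f)-\hat{\Phi}_i(f)
=\frac{1}{\mu(G)}\int_G\left[\Phi_i(f)-R_{\gamma^{-1}}^*\Phi_i(R_\gamma^* f)\right]d\mu(\gamma)
\end{align*}
and apply Minkowski's integral inequality in $H_i=L^2(P_i,\nu_i)$, reducing the task to a uniform-in-$\gamma$ bound on $\|\Phi_i(f)-R_{\gamma^{-1}}^*\Phi_i(R_\gamma^* f)\|_{H_i}$. A pointwise unpacking shows that on the $G$-invariant set $\pi_i^{-1}(B(\bar{p}_i,R'_i))$ the integrand at $u$ equals $f(\phi_i(u))-f(\phi_i(u\gamma^{-1})\gamma)$, and vanishes outside it. The $\varepsilon_i$-$G$-equivariance of $\phi_i$ yields $d_\infty(\phi_i(u\gamma^{-1})\gamma,\phi_i(u))<\varepsilon_i$, so the uniform continuity of $f$ (compact support) bounds this pointwise difference by $\omega_f(\varepsilon_i)$, where $\omega_f$ is the modulus of continuity of $f$ and $\omega_f(\varepsilon_i)\to 0$.

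To conclude, it suffices to bound the $\nu_i$-measure of the support of the integrand uniformly in $i$ and $\gamma$. Since $\mathrm{supp}(f)$ is compact in $P_\infty$, this support projects into a fixed bounded subset of $P_i/G$ determined by $\mathrm{supp}(f)$ independently of $\gamma$, and the uniform bound follows either trivially in the compact case where $\nu_i(P_i)=1$, or in the pointed case from the vague convergence ${\phi_i}_*\nu_i\to\nu_\infty$ combined with the $G$-invariance of $\nu_i$. This then gives $\|\Phi_i(f)-\hat{\Phi}_i(f)\|_{H_i}\le C\,\omega_f(\varepsilon_i)\to 0$. The most delicate point in this argument is this last measure bound in the pointed noncompact setting, where one must carefully relate the $G$-translates of $\mathrm{supp}(f)$ back to a single bounded region whose $\nu_i$-mass stays bounded along the sequence.
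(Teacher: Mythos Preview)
Your proof is correct and follows essentially the same route as the paper. The paper also leaves the $G$-equivariance as an easy check, then bounds the $L^2$ difference by unpacking the integrand pointwise to $f(\phi_i(u))-f(\phi_i(u\gamma^{-1})\gamma)$ (after a harmless change of variables using the $G$-invariance of $\nu_i$), applies the uniform continuity of $f$ together with the $\varepsilon_i$-$G$-equivariance of $\phi_i$, and finishes with the same bounded-measure argument via Corollary~\ref{bundle to base pt}; your use of Minkowski's integral inequality in place of writing out the squared norm is a cosmetic variation only.
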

\begin{proof}
It is easy to check that $\hat{\Phi}_i$ is 
$G$-equivariant. 
One can see
\begin{align*}
&\quad \ \| \Phi_i(f) 
- \hat{\Phi}_i(f)\|_{H_i}^2\\
&= \int_{\pi_i^{-1}(B(\bar{p}_i,R'_i)} 
\frac{1}{\mu(G)^2}
\left| \int_G \left( f\circ \phi_i(u)
- f( \phi_i(u\gamma^{-1})\gamma) \right) 
d\mu(\gamma) \right|^2d\nu_i(u).
\end{align*}
Since $\nu_i$ is $G$-invariant, 
we have 
\begin{align*}
&\quad \ \| \Phi_i(f) 
- \hat{\Phi}_i(f)\|_{H_i}^2\\
&= \frac{1}{\mu(G)^2}
\int_{\pi_i^{-1}(B(\bar{p}_i,R'_i)} 
\left| \int_G \left( f\circ \phi_i(u\gamma)
- f( \phi_i(u)\gamma) \right) 
d\mu(\gamma) \right|^2d\nu_i(u).
\end{align*}
Recall that the support of $f$ is compact. 
Then $f$ is uniformly continuous, 
hence for any $\varepsilon>0$ 
there exists $\delta>0$ such that 
$d_\infty(u_0,u_1)<\delta$ implies 
$|f(u_0)-f(u_1)|<\varepsilon$. 
If we take $i_\varepsilon$ such that 
$\varepsilon_i<\delta$ holds 
for any $i\ge i_\varepsilon$, 
then we can see 
$|f\circ \phi_i(u\gamma)-f(\phi_i(u)\gamma)|
<\varepsilon$ 
for any $u\in\pi_i^{-1}(B(\bar{p}_i,R'_i))$, 
$\gamma\in G$ and $i\ge i_\varepsilon$, 
and $|f\circ \phi_i(u\gamma)-f(\phi_i(u)\gamma)|
=0$ for $u\notin \phi_i^{-1}({\rm supp}(f)\cdot G)$, 
which gives 
\begin{align*}
\| \Phi_i(f) 
- \hat{\Phi}_i(f)\|_{H_i}^2
= \varepsilon^2 \nu_i\left( 
\phi_i^{-1}({\rm supp}(f)\cdot G) \right).
\end{align*}
By the same argument in the 
proof of Corollary \ref{bundle to base pt}, 
we have 
\begin{align*}
\limsup_{i\to\infty}\nu_i\left( 
\phi_i^{-1}({\rm supp}(f)\cdot G) \right) < \infty,
\end{align*}
which implies the assertion. 
\end{proof}

\section{Spectral structures with group actions}\label{sec spec}
Let $H_i,\mathcal{C},\Phi_i\colon \mathcal{C}\to H_i$ be 
as in Section \ref{kuwae shioya}, 
and $\Sigma_i$ be the spectral structures generated by 
$A_i\colon\mathcal{D}(A_i)\to H_i$. 
In this section we observe some spectral structures 
induced from $\Sigma_i$. 
\subsection{Tensor products}
Let $V$ be a real vector space of dimension $k$
with a positive definite inner product. 
Then $H_i\otimes V$ are Hilbert spaces whose inner products 
are given by 
\begin{align*}
\left\langle \sum_\alpha u^\alpha\otimes e_\alpha, 
\sum_\alpha v^\alpha\otimes e_\alpha\right\rangle_{H_i\otimes V}
= \sum_\alpha \langle u^\alpha, v^\alpha\rangle_{H_i},
\end{align*}
where $e_1,\ldots,e_k\in V$ is an orthonormal basis and 
$u^\alpha,v^\alpha\in H_i$. 
Then $\mathcal{C}\otimes V$ is dense in $H_\infty\otimes V$ 
and we define $\Phi^V_i\colon \mathcal{C}\otimes V \to H_i\otimes V$ 
by $\Phi^V_i(\sum_\alpha u^\alpha\otimes e_\alpha)
:=\sum_\alpha \Phi_i(u^\alpha)\otimes e_\alpha$. 
Then we can check the followings easily. 
\begin{prop}
Let $u_i^\alpha\in H_i$. 
\begin{itemize}
\setlength{\parskip}{0cm}
\setlength{\itemsep}{0cm}
 \item[$(1)$] $\sum_\alpha u_i^\alpha\otimes e_\alpha \to 
\sum_\alpha u_\infty^\alpha\otimes e_\alpha$ strongly iff 
$u_i^\alpha \to u_\infty^\alpha$ strongly for all $\alpha$. 
 \item[$(2)$] $\sum_\alpha u_i^\alpha\otimes e_\alpha \to 
\sum_\alpha u_\infty^\alpha\otimes e_\alpha$ weakly iff 
$u_i^\alpha \to u_\infty^\alpha$ weakly for all $\alpha$. 
\end{itemize}
\label{tensor conv8.1}
\end{prop}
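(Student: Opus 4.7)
The plan is to exploit the orthonormal decomposition: both the norm and the inner product on $H_i \otimes V$ split as finite sums over the basis $\{e_\alpha\}$, so convergence in the tensor product reduces componentwise to convergence in each factor. The only nontrivial ingredient is that $\Phi^V_i$ was defined precisely to respect this decomposition, namely $\Phi^V_i(\sum_\alpha \tilde{u}^\alpha\otimes e_\alpha) = \sum_\alpha \Phi_i(\tilde{u}^\alpha)\otimes e_\alpha$. I don't expect any genuine obstacle here; the statement is essentially a bookkeeping lemma whose purpose is to let later arguments treat $V$-valued sections as a tuple of scalar-valued ones.

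For $(1)$, I would argue as follows. Suppose $u_i^\alpha \to u_\infty^\alpha$ strongly for each $\alpha$. For each $\alpha$ pick a sequence $\{\tilde{u}_k^\alpha\}_k \subset \mathcal{C}$ with $\tilde{u}_k^\alpha \to u_\infty^\alpha$ in $H_\infty$ and $\lim_k \limsup_i \|\Phi_i(\tilde{u}_k^\alpha) - u_i^\alpha\|_{H_i} = 0$, and form $\tilde{w}_k := \sum_\alpha \tilde{u}_k^\alpha \otimes e_\alpha \in \mathcal{C}\otimes V$. Using the orthonormality of the $e_\alpha$,
\begin{align*}
\bigl\|\Phi^V_i(\tilde{w}_k) - \textstyle\sum_\alpha u_i^\alpha\otimes e_\alpha\bigr\|_{H_i\otimes V}^2 = \sum_\alpha \|\Phi_i(\tilde{u}_k^\alpha) - u_i^\alpha\|_{H_i}^2,
\end{align*}
so the finite sum over $\alpha$ allows me to pass the limits in $k,i$ through, giving strong convergence in the tensor product. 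Conversely, given strong convergence in $H_i\otimes V$ with witnessing sequence $\tilde{w}_k = \sum_\alpha \tilde{u}_k^\alpha \otimes e_\alpha$ (possibly after rewriting any $\tilde{w}_k \in \mathcal{C}\otimes V$ in the basis), the same identity bounds each component $\|\Phi_i(\tilde{u}_k^\alpha) - u_i^\alpha\|_{H_i}$ by the total tensor norm, and $\tilde{u}_k^\alpha \to u_\infty^\alpha$ in $H_\infty$ follows from $\tilde{w}_k \to \sum_\alpha u_\infty^\alpha\otimes e_\alpha$, yielding strong convergence in each factor.

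For $(2)$, I would use that the inner product on $H_i\otimes V$ splits as $\langle \sum_\alpha u_i^\alpha\otimes e_\alpha, \sum_\alpha v_i^\alpha\otimes e_\alpha\rangle = \sum_\alpha \langle u_i^\alpha, v_i^\alpha\rangle_{H_i}$. For the ``if'' direction, any strongly convergent $v_i = \sum_\alpha v_i^\alpha\otimes e_\alpha \to \sum_\alpha v_\infty^\alpha\otimes e_\alpha$ has, by part $(1)$, strongly convergent components $v_i^\alpha \to v_\infty^\alpha$; pairing each with the weakly convergent $u_i^\alpha$ and summing over the finite index set $\alpha$ gives the required convergence of inner products. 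For the ``only if'' direction, given a strongly convergent $w_i \to w_\infty$ in $H_i$, part $(1)$ tells me that $w_i\otimes e_\alpha \to w_\infty \otimes e_\alpha$ strongly in $H_i \otimes V$ for each fixed $\alpha$; testing weak convergence of $\sum_\beta u_i^\beta \otimes e_\beta$ against this sequence isolates the $\alpha$-th component and yields $\langle u_i^\alpha, w_i\rangle_{H_i} \to \langle u_\infty^\alpha, w_\infty\rangle_{H_\infty}$, which is exactly weak convergence of the $\alpha$-th component.
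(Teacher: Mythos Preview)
Your argument is correct and is precisely the routine verification the paper has in mind; the paper itself does not supply a proof of this proposition, remarking only that one ``can check the followings easily.'' Your componentwise splitting via the orthonormal basis and the definition of $\Phi_i^V$ is the natural (and essentially unique) way to carry out that check.
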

Let $A_i^V\colon \mathcal{D}(A_i)\otimes V\to H_i\otimes V$ be 
defined by 
\begin{align*}
A_i^V(\sum_\alpha u^\alpha\otimes e_\alpha)
:=\sum_\alpha A_i(u^\alpha)\otimes e_\alpha
\end{align*}
and put 
$\mathcal{E}_i^V(u):=\langle A_i^V(u),u\rangle_{H_i}$, then we have 
$\mathcal{E}_i^V(\sum_\alpha u^\alpha\otimes e_\alpha)
=\sum_\alpha \mathcal{E}_i(u^\alpha)$.

\begin{prop}
\mbox{}

\begin{itemize}
\setlength{\parskip}{0cm}
\setlength{\itemsep}{0cm}
 \item[$(1)$] If $\mathcal{E}_i$ is closed, 
then $\mathcal{E}_i^V$ is closed. 
 \item[$(2)$] If $\mathcal{E}_i\Mosco \mathcal{E}_\infty$ 
(resp. $\mathcal{E}_i\cpt \mathcal{E}_\infty$), 
then $\mathcal{E}_i^V \Mosco 
\mathcal{E}_\infty^V$ 
(resp. $\mathcal{E}_i^V\cpt \mathcal{E}_\infty^V$). 
\end{itemize}
\label{tensor conv}
\end{prop}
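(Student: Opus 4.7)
The plan is to reduce every condition on $\mathcal{E}_i^V$ coordinatewise to the corresponding condition on $\mathcal{E}_i$, using the orthonormal basis $e_1,\ldots,e_k$ of $V$ and the Pythagorean identities
\begin{align*}
\|u\|_{H_i\otimes V}^2=\sum_\alpha \|u^\alpha\|_{H_i}^2,\qquad
\mathcal{E}_i^V(u)=\sum_\alpha \mathcal{E}_i(u^\alpha)
\end{align*}
for $u=\sum_\alpha u^\alpha\otimes e_\alpha$, together with Proposition \ref{tensor conv8.1}, which identifies strong and weak convergence in $H_i\otimes V$ with coordinatewise strong and weak convergence in $H_i$. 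In particular $\|u\|_{\mathcal{E}_i^V}^2=\sum_\alpha\|u^\alpha\|_{\mathcal{E}_i}^2$, so the $\mathcal{E}_i^V$-norm decomposes coordinatewise.

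For (1), I would take a Cauchy sequence $\{u_n\}$ in the $\mathcal{E}_i^V$-norm and read off from the above splitting a Cauchy sequence $\{u_n^\alpha\}\subset\mathcal{D}(A_i)$ in the $\mathcal{E}_i$-norm for each $\alpha$; closedness of $\mathcal{E}_i$ then furnishes limits $u_\infty^\alpha$ which recombine into the required limit $\sum_\alpha u_\infty^\alpha\otimes e_\alpha\in\mathcal{D}(A_i^V)$.

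For the Mosco part of (2), the lower semicontinuity would follow by taking any $u_i\to u_\infty$ weakly in $H_i\otimes V$, extracting weak coordinatewise convergence $u_i^\alpha\to u_\infty^\alpha$, and computing
\begin{align*}
\mathcal{E}_\infty^V(u_\infty)
=\sum_\alpha\mathcal{E}_\infty(u_\infty^\alpha)
\le\sum_\alpha\liminf_{i\to\infty}\mathcal{E}_i(u_i^\alpha)
\le\liminf_{i\to\infty}\sum_\alpha\mathcal{E}_i(u_i^\alpha)
=\liminf_{i\to\infty}\mathcal{E}_i^V(u_i),
\end{align*}
where the second inequality uses that the sum over $\alpha$ is finite. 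For the recovery sequence I would choose, for each coordinate of $u_\infty=\sum_\alpha u_\infty^\alpha\otimes e_\alpha$, a strongly convergent $u_i^\alpha\to u_\infty^\alpha$ with $\mathcal{E}_i(u_i^\alpha)\to\mathcal{E}_\infty(u_\infty^\alpha)$; Proposition \ref{tensor conv8.1}(1) then yields strong convergence of the tensor sum $u_i:=\sum_\alpha u_i^\alpha\otimes e_\alpha$, and the energies add to give $\mathcal{E}_i^V(u_i)\to\mathcal{E}_\infty^V(u_\infty)$.

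For the compact case the Mosco conclusion has already been obtained, so only condition (4) of Definition \ref{def spec2} remains. Given $\{u_i\}$ with $\limsup_{i\to\infty}(\|u_i\|_{H_i\otimes V}^2+\mathcal{E}_i^V(u_i))<\infty$, every coordinate satisfies $\limsup_{i\to\infty}(\|u_i^\alpha\|_{H_i}^2+\mathcal{E}_i(u_i^\alpha))<\infty$, so compact convergence of $\mathcal{E}_i$ gives strongly convergent subsequences in each coordinate; because $k=\dim V<\infty$, a single finite diagonal extraction over $\alpha=1,\ldots,k$ makes all coordinates converge simultaneously, and Proposition \ref{tensor conv8.1}(1) packages these into a strongly convergent subsequence in $H_i\otimes V$. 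I do not anticipate any serious obstacle here: the whole argument is bookkeeping on finite orthogonal sums, and the only mild point is ensuring that recovery sequences and diagonal extractions cooperate across the finitely many coordinates, which is automatic because $V$ is finite-dimensional.
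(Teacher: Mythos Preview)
Your proposal is correct and follows essentially the same route as the paper: both arguments decompose everything coordinatewise along the orthonormal basis of $V$, invoke Proposition~\ref{tensor conv8.1} to pass strong/weak convergence to the components, and use the Pythagorean identities for the norm and the energy. Your write-up is in fact slightly more careful than the paper's, which declares (1) obvious and leaves the finite diagonal extraction for the compact case implicit.
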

\begin{proof}
$(1)$ is obvious. We show $(2)$. 
Suppose $\mathcal{E}_i\Mosco \mathcal{E}_\infty$. 
If $\sum_\alpha u_i^\alpha\otimes e_\alpha \to 
\sum_\alpha u_\infty^\alpha\otimes e_\alpha$ weakly, then by Proposition \ref{tensor conv8.1} we can see 
$u_i^\alpha \to u_\infty^\alpha$ weakly for 
any $\alpha$ and 
\begin{align*}
\mathcal{E}_\infty^V\left(\sum_\alpha u_\infty^\alpha\otimes e_\alpha\right)
= \sum_\alpha \mathcal{E}_\infty
(u_\infty^\alpha)
&\le \sum_\alpha \liminf_{i\to\infty} \mathcal{E}_i
(u_i^\alpha)\\
&\le \liminf_{i\to\infty}\sum_\alpha \mathcal{E}_i
(u_i^\alpha)\\
&= \liminf_{i\to\infty}\mathcal{E}_i^V\left(\sum_\alpha u_\infty^\alpha\otimes e_\alpha\right). 
\end{align*}
Next we take $\sum_\alpha u_\infty^\alpha\otimes e_\alpha\in H_\infty\otimes V$ arbitrarily. 
By the assumption there are $u_i^\alpha\in H_i$ such that 
$u_i^\alpha\to u_\infty^\alpha$ strongly and 
$\mathcal{E}_\infty( u_\infty^\alpha)=
\lim_{i\to\infty}\mathcal{E}_i(u_i^\alpha)$. 
Then $\sum_\alpha u_i^\alpha\otimes e_\alpha \to 
\sum_\alpha u_\infty^\alpha\otimes e_\alpha$ 
strongly by Proposition \ref{tensor conv8.1} and 
we obtain 
\begin{align*}
\mathcal{E}_\infty^V\left(\sum_\alpha u_\infty^\alpha\otimes e_\alpha\right)
= \sum_\alpha \mathcal{E}_\infty
(u_\infty^\alpha)
&= \sum_\alpha \lim_{i\to\infty} \mathcal{E}_i
(u_i^\alpha)\\
&= \lim_{i\to\infty}\sum_\alpha \mathcal{E}_i
(u_i^\alpha)\\
&= \lim_{i\to\infty}\mathcal{E}_i^V\left(\sum_\alpha u_\infty^\alpha\otimes e_\alpha\right). 
\end{align*}
Finally, we take 
$\{ \sum_\alpha u_i^\alpha\otimes e_\alpha\}_i$ 
such that 
\begin{align*}
\limsup_{i\to \infty}\left( \left\| 
\sum_\alpha u_i^\alpha\otimes e_\alpha
\right\|_{H_i}^2+\mathcal{E}_i^V\left(\sum_\alpha u_i^\alpha\otimes e_\alpha\right)\right)
< \infty. 
\end{align*}
Since 
\begin{align*}
\left\| 
\sum_\alpha u_i^\alpha\otimes e_\alpha
\right\|_{H_i}^2
+\mathcal{E}_i^V\left(\sum_\alpha u_i^\alpha\otimes e_\alpha\right)
= \sum_\alpha \{ \| u_i^\alpha \|_{H_i}^2
+ \mathcal{E}_i( u_i^\alpha)\}, 
\end{align*}
we have $\limsup_{i\to\infty}\| u_i^\alpha \|_{H_i}^2
+ \mathcal{E}_i( u_i^\alpha) <\infty$ for 
all $\alpha$. 
Then we have 
a subsequence $\{ i_j\}_j\subset \{ i\}_i$ such that 
$\{ u_{i_j}^\alpha\}_j$ strongly converge 
for all $\alpha$ if $\mathcal{E}_i\cpt
\mathcal{E}_\infty$. 
Then $\{ \sum_\alpha u_{i_j}^\alpha\otimes e_\alpha\}_j$ 
strongly converges to some points by Proposition \ref{tensor conv8.1}. 
\end{proof}
As a consequence, we obtain the following. 
\begin{prop}
Let $\Sigma_i^V$ be the spectral structure 
on $H_i^V$ generated by $A_i^V$. 
If $\Sigma_i\str \Sigma_\infty$ 
(resp. $\Sigma_i\cpt \Sigma_\infty$), 
then $\Sigma_i^V\str \Sigma_\infty^V$ 
(resp. $\Sigma_i^V\cpt \Sigma_\infty^V$). 
\label{tensor conv2}
\end{prop}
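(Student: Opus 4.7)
The plan is to reduce Proposition \ref{tensor conv2} to the already-proved Proposition \ref{tensor conv} via the definitional equivalence between spectral-structure convergence and Mosco/compact convergence of the associated quadratic forms. No new analytic input is needed.

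First I would unwind the definitions. By the definition given just before Proposition \ref{tensor conv2}, the statement $\Sigma_i \str \Sigma_\infty$ is \emph{defined} to mean $\mathcal{E}_i \Mosco \mathcal{E}_\infty$, and $\Sigma_i \cpt \Sigma_\infty$ is defined to mean $\mathcal{E}_i \cpt \mathcal{E}_\infty$. The same equivalences hold for the tensored data: $\Sigma_i^V \str \Sigma_\infty^V$ iff $\mathcal{E}_i^V \Mosco \mathcal{E}_\infty^V$, and similarly in the compact case. Here $\mathcal{E}_i^V$ is the closed form associated with $A_i^V$, which is closed by Proposition \ref{tensor conv}(1), so the spectral structure $\Sigma_i^V$ is indeed well-defined.

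Next I would simply invoke Proposition \ref{tensor conv}(2): Mosco convergence $\mathcal{E}_i \Mosco \mathcal{E}_\infty$ implies $\mathcal{E}_i^V \Mosco \mathcal{E}_\infty^V$, and compact convergence $\mathcal{E}_i \cpt \mathcal{E}_\infty$ implies $\mathcal{E}_i^V \cpt \mathcal{E}_\infty^V$. Combining with the definitional equivalences of the previous paragraph yields $\Sigma_i^V \str \Sigma_\infty^V$ and $\Sigma_i^V \cpt \Sigma_\infty^V$ respectively, which is exactly the conclusion of Proposition \ref{tensor conv2}.

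There is no real obstacle here: the proposition is a direct translation of Proposition \ref{tensor conv} from the language of quadratic forms to the language of spectral structures. The only thing worth a sentence of explanation is that $\Phi_i^V$ satisfies the analogue of \eqref{conv str} on $\mathcal{C} \otimes V$, so that the notions of strong and weak convergence in $H_i \otimes V$ used in defining Mosco convergence of $\mathcal{E}_i^V$ are the ones compatible with $\Phi_i^V$; this follows immediately from the orthonormality of $\{ e_\alpha\}$ and \eqref{conv str} for $\Phi_i$, and was implicitly used already in Proposition \ref{tensor conv8.1}.
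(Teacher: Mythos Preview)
Your proposal is correct and matches the paper's approach exactly: the paper presents Proposition \ref{tensor conv2} with no separate proof, introducing it simply with ``As a consequence, we obtain the following,'' i.e.\ as an immediate translation of Proposition \ref{tensor conv} via the definitional equivalence between spectral-structure convergence and Mosco/compact convergence of the associated forms. Your added remark that $\Phi_i^V$ satisfies the analogue of \eqref{conv str} is a harmless clarification the paper leaves implicit.
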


\subsection{Closed subspaces}
Let $K_i\subset H_i$ be closed subspaces. 
In this subsection we assume the followings. 
\begin{itemize}
\setlength{\parskip}{0cm}
\setlength{\itemsep}{0cm}
 \item[$(i)$] For $\mathcal{C}':=\mathcal{C}\cap K_\infty$ and $\mathcal{C}'':=\mathcal{C}\cap K_\infty^\perp$, we have the decomposition 
$\mathcal{C}=\mathcal{C}'\oplus \mathcal{C}''$. 
 \item[$(ii)$] $\Phi_i(\mathcal{C}')\subset K_i$ 
and $\Phi_i(\mathcal{C}'')\subset K_i^\perp$. 
\end{itemize}
Moreover, denote by $\pi_{K_i}\colon H_i\to K_i$ 
and $\pi_{K_i^\perp}\colon H_i\to K_i^\perp$ 
the orthogonal projections. 

\begin{prop}
Let $u_i\in H_i$ and $u_i\to u_\infty$ strongly 
with respect to $\{ \Phi_i\}_i$. 
Then $\pi_{K_i}(u_i) \to \pi_{K_\infty}(u_\infty)$ 
strongly with respect to 
$\{ \Phi_i|_{\mathcal{C}'}\}_i$. 
\label{subspace conv1}
\end{prop}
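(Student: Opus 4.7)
The plan is to pull back the strong convergence of $u_i$ to $u_\infty$ to the subspaces using the splitting of $\mathcal{C}$ guaranteed by assumption $(i)$ together with the compatibility $(ii)$ between $\Phi_i$ and the orthogonal decompositions.

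First I would unpack the hypothesis: by Definition \ref{def spec}, there exist $\tilde{u}_k \in \mathcal{C}$ with $\tilde{u}_k \to u_\infty$ in $H_\infty$ and
\begin{align*}
\lim_{k\to\infty}\limsup_{i\to\infty}\|\Phi_i(\tilde{u}_k) - u_i\|_{H_i} = 0.
\end{align*}
By assumption $(i)$ I can write $\tilde{u}_k = \tilde{u}_k' + \tilde{u}_k''$ with $\tilde{u}_k' \in \mathcal{C}'$ and $\tilde{u}_k'' \in \mathcal{C}''$; since $\mathcal{C}' \subset K_\infty$ and $\mathcal{C}'' \subset K_\infty^\perp$, this is precisely the $K_\infty$/$K_\infty^\perp$ decomposition, so $\tilde{u}_k' = \pi_{K_\infty}(\tilde{u}_k)$. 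Continuity of $\pi_{K_\infty}$ then gives $\tilde{u}_k' \to \pi_{K_\infty}(u_\infty)$ in $K_\infty$, which provides the candidate approximating sequence in $\mathcal{C}'$.

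Next I would verify the decisive estimate. By assumption $(ii)$, $\Phi_i(\tilde{u}_k') \in K_i$ and $\Phi_i(\tilde{u}_k'') \in K_i^\perp$, so the orthogonal decomposition of $\Phi_i(\tilde{u}_k)$ with respect to $K_i \oplus K_i^\perp$ is exactly $\Phi_i(\tilde{u}_k') + \Phi_i(\tilde{u}_k'')$. In particular $\pi_{K_i}(\Phi_i(\tilde{u}_k)) = \Phi_i(\tilde{u}_k')$, and since $\pi_{K_i}$ is a norm-$1$ projection,
\begin{align*}
\|\Phi_i(\tilde{u}_k') - \pi_{K_i}(u_i)\|_{H_i} = \|\pi_{K_i}(\Phi_i(\tilde{u}_k) - u_i)\|_{H_i} \le \|\Phi_i(\tilde{u}_k) - u_i\|_{H_i}.
\end{align*}
Taking $\limsup_{i\to\infty}$ then $\lim_{k\to\infty}$ shows the right-hand side tends to $0$, which is exactly the strong convergence $\pi_{K_i}(u_i) \to \pi_{K_\infty}(u_\infty)$ with respect to $\{\Phi_i|_{\mathcal{C}'}\}_i$.

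The only point that genuinely requires care is checking that $\{\Phi_i|_{\mathcal{C}'}\}_i$ defines a legitimate convergence structure on $\{K_i\}_i$ in the sense of \eqref{conv str}; this is immediate because $\Phi_i(v) \in K_i$ inherits its norm from $H_i$ and $\mathcal{C}' = \mathcal{C}\cap K_\infty$ is dense in $K_\infty$ (the latter follows from density of $\mathcal{C}$ in $H_\infty$ together with the splitting in $(i)$, since $\pi_{K_\infty}(\mathcal{C}) = \mathcal{C}'$). After that, the argument above is essentially a two-line computation; the content of the proposition lies entirely in exploiting $(i)$ and $(ii)$ to make the splitting $\Phi_i(\tilde{u}_k) = \Phi_i(\tilde{u}_k') + \Phi_i(\tilde{u}_k'')$ respect the orthogonal decomposition on each $H_i$.
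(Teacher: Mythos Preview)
Your proof is correct and follows essentially the same route as the paper's: decompose $\tilde{u}_k=\tilde{u}_k'+\tilde{u}_k''$ via assumption $(i)$, use $(ii)$ to see this matches the $K_i\oplus K_i^\perp$ decomposition of $\Phi_i(\tilde{u}_k)$, and conclude. The only cosmetic difference is that the paper writes the Pythagorean identity
\[
\|\Phi_i(\tilde{u}_k)-u_i\|_{H_i}^2=\|\Phi_i(\tilde{u}_k')-\pi_{K_i}(u_i)\|_{H_i}^2+\|\Phi_i(\tilde{u}_k'')-\pi_{K_i^\perp}(u_i)\|_{H_i}^2
\]
directly, whereas you invoke the contractivity of $\pi_{K_i}$; these are equivalent observations.
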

\begin{proof}
Suppose $\{ \tilde{u}_k\}_k\in \mathcal{C}$ 
converges to $u_\infty$ and satisfies 
\begin{align*}
\lim_{k\to\infty}\limsup_{i\to\infty} 
\| \Phi_i(\tilde{u}_k) - u_i\|_{H_i} = 0.
\end{align*} 
Take $\{ \tilde{u}_k'\}_k\subset \mathcal{C}'$ 
and $\{ \tilde{u}_k''\}_k\subset \mathcal{C}''$ 
such that $\tilde{u}_k=\tilde{u}_k' + \tilde{u}_k''$. 
Then 
\begin{align*}
\| \Phi_i(\tilde{u}_k) - u_i\|_{H_i}^2 
&= \| \Phi_i(\tilde{u}_k') - \pi_{K_i}(u_i)\|_{H_i}^2
+ \| \Phi_i(\tilde{u}_k'') - \pi_{K_i^\perp}(u_i)\|_{H_i}^2 
\end{align*}
holds, which gives
\begin{align*}
\lim_{k\to\infty}\limsup_{i\to\infty} 
\| \Phi_i(\tilde{u}_k') - \pi_{K_i}(u_i)\|_{H_i} = 0.
\end{align*} 
Since $\tilde{u}_k' \to \pi_{K_\infty}(u_\infty)$, 
we have the assertion. 
\end{proof}

\begin{prop}
Let $u_i\in K_i$. 
\begin{itemize}
\setlength{\parskip}{0cm}
\setlength{\itemsep}{0cm}
 \item[$(1)$] $u_i \to 
u_\infty$ strongly with respect to 
$\{ \Phi_i|_{\mathcal{C}'}\}_i$
iff 
$u_i \to u_\infty$ strongly with respect to 
$\{ \Phi_i\}_i$. 
 \item[$(2)$] $u_i \to 
u_\infty$ weakly with respect to 
$\{ \Phi_i|_{\mathcal{C}'}\}_i$
iff 
$u_i \to u_\infty$ weakly with respect to 
$\{ \Phi_i\}_i$. 
\end{itemize}
\label{subspace conv2}
\end{prop}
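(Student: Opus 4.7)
The plan is to reduce both parts to Proposition \ref{subspace conv1} plus the orthogonal decomposition hypothesis $\mathcal{C}=\mathcal{C}'\oplus\mathcal{C}''$ with $\Phi_i(\mathcal{C}')\subset K_i$ and $\Phi_i(\mathcal{C}'')\subset K_i^\perp$. First I would verify that $\Phi_i|_{\mathcal{C}'}\colon \mathcal{C}'\to K_i$ itself satisfies the compatibility \eqref{conv str}: this is automatic because $\|\Phi_i(u)\|_{H_i}=\|u\|_{H_\infty}$ is inherited from $\{\Phi_i\}_i$. Hence strong and weak convergence in the sense of Definition \ref{def spec} make sense with respect to $\{\Phi_i|_{\mathcal{C}'}\}_i$.

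For part (1), the forward implication is immediate: any approximating sequence $\{\tilde u_k\}_k\subset\mathcal{C}'$ witnessing strong convergence with respect to $\{\Phi_i|_{\mathcal{C}'}\}_i$ lies in $\mathcal{C}$ and witnesses the same for $\{\Phi_i\}_i$. For the converse, assume $u_i\in K_i$ and $u_i\to u_\infty$ strongly with respect to $\{\Phi_i\}_i$. I would first observe $u_\infty\in K_\infty$: by Proposition \ref{subspace conv1} the projections satisfy $\pi_{K_i^\perp}(u_i)\to \pi_{K_\infty^\perp}(u_\infty)$ strongly, and the left-hand side is $0$ for every $i$, so the norm inequality $\|\pi_{K_\infty^\perp}(u_\infty)\|\le \liminf_i\|\Phi_i(0)-0\|_{H_i}=0$ forces $u_\infty\in K_\infty$. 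Then applying Proposition \ref{subspace conv1} to $u_i$ itself gives $u_i=\pi_{K_i}(u_i)\to\pi_{K_\infty}(u_\infty)=u_\infty$ strongly with respect to $\{\Phi_i|_{\mathcal{C}'}\}_i$.

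For part (2), I would use the standard fact that weak convergence is tested against arbitrary strongly convergent sequences in the ambient Hilbert space. Assume $u_i\to u_\infty$ weakly with respect to $\{\Phi_i|_{\mathcal{C}'}\}_i$; in particular $u_\infty\in K_\infty$. For any $v_i\in H_i$ with $v_i\to v_\infty$ strongly with respect to $\{\Phi_i\}_i$, Proposition \ref{subspace conv1} produces $\pi_{K_i}(v_i)\to\pi_{K_\infty}(v_\infty)$ strongly with respect to $\{\Phi_i|_{\mathcal{C}'}\}_i$. Since $u_i\in K_i$ and $u_\infty\in K_\infty$, the computation
\begin{align*}
\langle u_i,v_i\rangle_{H_i}
=\langle u_i,\pi_{K_i}(v_i)\rangle_{H_i}
\longrightarrow
\langle u_\infty,\pi_{K_\infty}(v_\infty)\rangle_{H_\infty}
=\langle u_\infty,v_\infty\rangle_{H_\infty}
\end{align*}
finishes the weak convergence with respect to $\{\Phi_i\}_i$. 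Conversely, if $u_i\to u_\infty$ weakly with respect to $\{\Phi_i\}_i$, then for any $v_i\in K_i$ strongly convergent with respect to $\{\Phi_i|_{\mathcal{C}'}\}_i$, part (1) upgrades this to strong convergence with respect to $\{\Phi_i\}_i$, and the pairings converge by hypothesis.

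The only genuine subtlety I anticipate is the verification $u_\infty\in K_\infty$ in the reverse direction of (1), which is why I would extract it first as a separate observation from Proposition \ref{subspace conv1}; once that is in hand the remaining steps are book-keeping that uses only the orthogonal decomposition $H_i=K_i\oplus K_i^\perp$ and self-adjointness of the projections.
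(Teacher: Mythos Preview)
Your proposal is correct and follows essentially the same route as the paper; the only difference is packaging. The paper redoes the orthogonal-decomposition computation from Proposition~\ref{subspace conv1} inline (taking $\tilde u_k\in\mathcal{C}$ and replacing it by $\pi_{K_\infty}\tilde u_k\in\mathcal{C}'$), whereas you invoke Proposition~\ref{subspace conv1} directly as a black box and are more explicit about the step $u_\infty\in K_\infty$---a point the paper leaves implicit.
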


\begin{proof}
Suppose $u_i \to u_\infty$ strongly with respect to 
$\{ \Phi_i\}_i$. 
Then there is $\tilde{u}_k\in \mathcal{C}$ converging to $u_\infty$ as $k\to\infty$ such that 
\begin{align*}
\lim_{k\to\infty}\limsup_{i\to\infty} 
\| \Phi_i(\tilde{u}_k) - u_i\|_{H_i} = 0.
\end{align*}
Since $\{ \pi_{K_\infty}\tilde{u}_k\}_k\subset 
\mathcal{C}'$ converges 
to $u_\infty$, we can show that 
$u_i \to u_\infty$ strongly with respect to 
$\{ \Phi_i|_{\mathcal{C}'}\}_i$. 
The converse is trivial. 

Next we show $(2)$. 
Suppose $u_i\to u_\infty$ weakly with respect to 
$\{ \Phi_i|_{\mathcal{C}'}\}_i$. 
Then it suffices to show that 
$\lim_{i\to\infty}\langle u_i,v_i\rangle$ 
for any strongly converging sequence 
$v_i\in H_i$. 
Since $\pi_{K_i}(v_i)\to \pi_{K_\infty}(v_\infty)$ strongly with respect to 
$\{ \Phi_i\}$ by the previous proposition, 
then $\pi_{K_i}(v_i)\to \pi_{K_\infty}(v_\infty)$ strongly with respect to 
$\{ \Phi_i|_{\mathcal{C}'}\}$ by $(1)$. 
Then we can deduce that 
\begin{align*}
\lim_{i\to\infty}\langle u_i,v_i\rangle
= \lim_{i\to\infty}\langle u_i,\pi_{K_i}(v_i)\rangle 
= \langle u_\infty,\pi_{K_\infty}(v_\infty)\rangle. 
\end{align*}
The converse is trivial.
\end{proof}

Assume that $A_i$ preserves the decomposition 
$H_i=K_i\oplus K_i^\perp$ in the following sense; 
\begin{itemize}
\setlength{\parskip}{0cm}
\setlength{\itemsep}{0cm}
 \item[$(iii)$] $\mathcal{D}(A_i)=\mathcal{D}(A_i)\cap K_i\oplus \mathcal{D}(A_i)\cap K_i^\perp$,
 \item[$(iv)$] $A_i(\mathcal{D}(A_i)\cap K_i)
\subset K_i$ and $A_i(\mathcal{D}(A_i)\cap K_i^\perp)
\subset K_i^\perp$. 
\end{itemize}

We can see that $\mathcal{E}_i|_{K_i}$ 
is closed quadratic form on $K_i$ 
since $\mathcal{E}_i$ is closed. 
Moreover, we have the followings by combining 
Propositions \ref{subspace conv1} and \ref{subspace conv2}. 
\begin{prop}
If $\mathcal{E}_i \Mosco \mathcal{E}_\infty$ 
(resp. $\mathcal{E}_i \cpt \mathcal{E}_\infty$), then $\mathcal{E}_i|_{K_i} \Mosco \mathcal{E}_\infty|_{K_\infty}$ 
(resp. $\mathcal{E}_i|_{K_i} \cpt \mathcal{E}_\infty|_{K_\infty}$). 
\end{prop}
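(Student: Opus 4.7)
The plan is to verify the defining conditions of Mosco (respectively compact) convergence for the restricted forms directly, using Propositions \ref{subspace conv1} and \ref{subspace conv2} together with the operator-level decomposition guaranteed by $(iii)$ and $(iv)$. A key observation I will use throughout is that $(iii)$ and $(iv)$ force the energy to split orthogonally: if $v_i = u_i + w_i$ with $u_i \in K_i \cap \mathcal{D}(A_i)$ and $w_i \in K_i^\perp \cap \mathcal{D}(A_i)$, then $A_i u_i \in K_i$ and $A_i w_i \in K_i^\perp$, so the cross terms $\langle A_iu_i,w_i\rangle$ and $\langle A_iw_i,u_i\rangle$ vanish and
\[
\mathcal{E}_i(v_i) = \mathcal{E}_i(u_i) + \mathcal{E}_i(w_i),
\]
and in particular $\mathcal{E}_i|_{K_i}(u) = \mathcal{E}_i(u)$ for every $u \in K_i$.

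The $\liminf$-condition is then immediate: if $u_i \in K_i$ converges weakly to $u_\infty \in K_\infty$ with respect to $\{\Phi_i|_{\mathcal{C}'}\}_i$, Proposition \ref{subspace conv2}(2) upgrades this to weak convergence in $H_i$ with respect to $\{\Phi_i\}_i$, and the Mosco hypothesis combined with the identity above yields $\mathcal{E}_\infty|_{K_\infty}(u_\infty) \le \liminf_i \mathcal{E}_i|_{K_i}(u_i)$.

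The more delicate step is the recovery sequence. Given $u_\infty \in K_\infty$ with $\mathcal{E}_\infty(u_\infty)<\infty$, the Mosco hypothesis supplies $v_i \in H_i$ with $v_i \to u_\infty$ strongly and $\mathcal{E}_i(v_i) \to \mathcal{E}_\infty(u_\infty)$. For $i$ large we have $v_i \in \mathcal{D}(A_i)$, so $(iii)$ decomposes $v_i = u_i + w_i$ with $u_i := \pi_{K_i}v_i$ and $w_i := \pi_{K_i^\perp}v_i$. Proposition \ref{subspace conv1} gives $u_i \to \pi_{K_\infty}u_\infty = u_\infty$ strongly with respect to $\{\Phi_i|_{\mathcal{C}'}\}_i$, and similarly $w_i \to 0$ strongly in the complementary direction. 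Since strong convergence implies weak convergence, applying the Mosco $\liminf$-condition of $\mathcal{E}_i$ to $u_i \to u_\infty$ and to $w_i \to 0$ gives $\liminf_i \mathcal{E}_i(u_i) \ge \mathcal{E}_\infty(u_\infty)$ and $\liminf_i \mathcal{E}_i(w_i) \ge 0$; combining with the orthogonal splitting,
\[
\mathcal{E}_\infty(u_\infty) = \lim_i \mathcal{E}_i(v_i) \ge \liminf_i \mathcal{E}_i(u_i) + \liminf_i \mathcal{E}_i(w_i) \ge \mathcal{E}_\infty(u_\infty),
\]
equality holds throughout, and in particular $\lim_i \mathcal{E}_i|_{K_i}(u_i) = \mathcal{E}_\infty|_{K_\infty}(u_\infty)$ as required.

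For the compact convergence case one starts from $u_i \in K_i$ with $\limsup_i (\|u_i\|_{H_i}^2 + \mathcal{E}_i|_{K_i}(u_i)) < \infty$; this quantity coincides with $\|u_i\|_{H_i}^2 + \mathcal{E}_i(u_i)$, so compact convergence of $\{\mathcal{E}_i\}$ extracts a subsequence strongly converging to some $u_\infty \in H_\infty$, and Propositions \ref{subspace conv1} and \ref{subspace conv2}(1) together identify $u_\infty = \pi_{K_\infty}u_\infty \in K_\infty$ and upgrade the convergence to the restricted sense. I expect the main obstacle to be the recovery step: merely projecting the $H_i$-recovery sequence does not automatically preserve the limit of the energy, and the only mechanism for controlling $\limsup_i \mathcal{E}_i(u_i)$ from above is the orthogonal splitting of $\mathcal{E}_i$ together with the $\liminf$-condition applied to the vanishing component $w_i$. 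This is precisely the reason assumptions $(iii)$ and $(iv)$ are needed.
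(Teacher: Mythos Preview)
Your argument is correct and is precisely the detailed verification the paper leaves implicit; the paper offers no proof beyond the sentence ``by combining Propositions \ref{subspace conv1} and \ref{subspace conv2}'', and your use of those propositions together with the orthogonal splitting of $\mathcal{E}_i$ coming from $(iii)$--$(iv)$ is exactly the intended route. One small point worth making explicit: from your chain of inequalities you obtain $\liminf_i \mathcal{E}_i(u_i)=\mathcal{E}_\infty(u_\infty)$, and the upgrade to a full limit follows because $\mathcal{E}_i(u_i)=\mathcal{E}_i(v_i)-\mathcal{E}_i(w_i)\le \mathcal{E}_i(v_i)$ (nonnegativity of $\mathcal{E}_i$), giving $\limsup_i \mathcal{E}_i(u_i)\le \mathcal{E}_\infty(u_\infty)$.
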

\begin{prop}
Let $\Sigma_i|_{K_i}$ be the spectral structure 
on $K_i$ generated by 
$A_i|_{\mathcal{D}(A_i)\cap K_i}$. 
If $\Sigma_i\str \Sigma_\infty$ 
(resp. $\Sigma_i\cpt \Sigma_\infty$), then $\Sigma_i|_{K_i}
\str \Sigma_\infty|_{K_\infty}$ 
(resp. $\Sigma_i|_{K_i}
\cpt \Sigma_\infty|_{K_\infty}$). 
\label{subspace conv3}
\end{prop}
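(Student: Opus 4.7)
The plan is to deduce the statement directly from the preceding Mosco/compact convergence result for the restricted quadratic forms, together with the definitional equivalence between convergence of spectral structures and Mosco/compact convergence of their associated closed quadratic forms, as established in \cite{KuwaeShioya2003}.

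First, I would verify that $\{K_i\}$ inherits a valid convergence structure from $\{H_i\}$. Condition $(ii)$ gives $\Phi_i|_{\mathcal{C}'} \colon \mathcal{C}' \to K_i$, and condition $(i)$ implies $\mathcal{C}'$ is dense in $K_\infty$: indeed, for any $u \in K_\infty$, taking $\tilde{u}_k \in \mathcal{C}$ with $\tilde{u}_k \to u$ and decomposing $\tilde{u}_k = \tilde{u}_k' + \tilde{u}_k''$ with $\tilde{u}_k' \in \mathcal{C}'$, $\tilde{u}_k'' \in \mathcal{C}''$, we get $\tilde{u}_k' = \pi_{K_\infty}(\tilde{u}_k) \to \pi_{K_\infty}(u) = u$. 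The norm compatibility $\lim_{i\to\infty}\| \Phi_i(u)\|_{K_i} = \| u\|_{K_\infty}$ for $u \in \mathcal{C}'$ is inherited directly from \eqref{conv str}.

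Second, I would note that conditions $(iii)$ and $(iv)$ guarantee that $A_i|_{\mathcal{D}(A_i) \cap K_i}$ is a self-adjoint nonnegative operator on $K_i$, and that its associated closed quadratic form on $K_i$ is precisely $\mathcal{E}_i|_{K_i}$. Indeed, for $u \in K_i$ we have $u \in \mathcal{D}(A_i)$ iff $u \in \mathcal{D}(A_i) \cap K_i$ by $(iii)$, and then $A_i u \in K_i$ by $(iv)$; the closedness of $\mathcal{E}_i|_{K_i}$ follows from closedness of $\mathcal{E}_i$ together with the fact that $K_i$ is closed in $H_i$.

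Third, by the immediately preceding proposition, $\mathcal{E}_i \Mosco \mathcal{E}_\infty$ implies $\mathcal{E}_i|_{K_i} \Mosco \mathcal{E}_\infty|_{K_\infty}$ (and likewise with $\cpt$). By definition (see \cite{KuwaeShioya2003} and the definitions recalled in Section \ref{kuwae shioya}), this is exactly the statement that $\Sigma_i|_{K_i} \str \Sigma_\infty|_{K_\infty}$ (resp. $\cpt$), provided we use the convergence structure $\{\Phi_i|_{\mathcal{C}'}\}_i$ on $\{K_i\}$. The only subtle point is that the preceding proposition was stated using the ambient convergence structure $\{\Phi_i\}$, while the spectral structures $\Sigma_i|_{K_i}$ are defined intrinsically on $K_i$; however, Proposition \ref{subspace conv2} shows that for sequences in $K_i$ the two notions of strong and weak convergence coincide, so Mosco convergence in either sense is the same.

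The argument is essentially bookkeeping, and the only nontrivial ingredient was already carried out in the preceding proposition. The main (minor) obstacle is confirming that the two ways of testing Mosco convergence of $\mathcal{E}_i|_{K_i}$ — via $\{\Phi_i\}_i$ and via $\{\Phi_i|_{\mathcal{C}'}\}_i$ — agree, which is precisely the content of Proposition \ref{subspace conv2}.
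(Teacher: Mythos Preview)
Your proposal is correct and follows essentially the same approach as the paper. The paper does not give a separate proof of this proposition; it simply states it (together with the preceding Mosco/compact convergence proposition for $\mathcal{E}_i|_{K_i}$) as an immediate consequence of combining Propositions \ref{subspace conv1} and \ref{subspace conv2}, relying on the definitional equivalence between strong/compact convergence of spectral structures and Mosco/compact convergence of the associated closed quadratic forms. Your write-up fills in exactly the bookkeeping the paper leaves implicit, including the density of $\mathcal{C}'$ in $K_\infty$ and the reconciliation of the two convergence structures via Proposition \ref{subspace conv2}.
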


\subsection{$G$-invariant subspaces}
\label{G-inv}
Now let $G$ be a compact topological group 
and $(\rho,V)$ be a unitary representation of 
$G$. 
Suppose $G$ acts isometrically on each $H_i$, 
$G\cdot \mathcal{C}\subset \mathcal{C}$ 
and $\Phi_i$ are all $G$-equivariant. 
Then $(\mathcal{C}\otimes V)^G$ is dense in 
$(H_\infty\otimes V)^G$ and 
$\Phi_i$ and $\Sigma_i$ induce 
$\Phi_i^\rho:=\Phi_i^V|_{(\mathcal{C}\otimes V)^G}\colon 
(\mathcal{C}\otimes V)^G \to (H_i\otimes V)^G$ 
and $\Sigma_i^\rho:=\Sigma_i^V|_{(H_i\otimes V)^G}$. 
By Propositions \ref{tensor conv2} 
and \ref{subspace conv3}, we have the 
following result. 
\begin{prop}
If $\Sigma_i\str \Sigma_\infty$ 
(resp. $\Sigma_i\cpt \Sigma_\infty$), then 
$\Sigma_i^\rho \str \Sigma_\infty^\rho$ 
(resp. $\Sigma_i^\rho \cpt \Sigma_\infty^\rho$). 
\label{G-subspace conv}
\end{prop}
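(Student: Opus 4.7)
The plan is to combine the two preceding results in this section: Proposition \ref{tensor conv2} (tensor products) and Proposition \ref{subspace conv3} (closed subspaces). First, applying Proposition \ref{tensor conv2} to $\Sigma_i \str \Sigma_\infty$ (resp.\ $\cpt$) directly gives $\Sigma_i^V \str \Sigma_\infty^V$ (resp.\ $\cpt$) on $H_i \otimes V$ with respect to $\Phi_i^V\colon \mathcal{C}\otimes V \to H_i \otimes V$. What remains is a single application of Proposition \ref{subspace conv3} with $K_i := (H_i\otimes V)^G$, since by definition $\Sigma_i^\rho = \Sigma_i^V|_{K_i}$.

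For Proposition \ref{subspace conv3} the four hypotheses $(i)$--$(iv)$ of the closed-subspace subsection must be checked. Hypotheses $(iii)$ and $(iv)$ follow because the generator $A_i$ commutes with the isometric $G$-action (in the intended application $A_i$ is the Laplacian of a $G$-invariant metric, hence $G$-equivariant), so $A_i^V = A_i \otimes \mathrm{id}_V$ commutes with the diagonal action $g_\gamma \otimes \rho(\gamma)$ and therefore preserves $K_i$ and $K_i^\perp$. Hypothesis $(ii)$ is immediate from the $G$-equivariance of $\Phi_i^V$ (inherited from that of $\Phi_i$): it sends $(\mathcal{C}\otimes V)^G$ into $K_i$ and the part of $\mathcal{C}\otimes V$ orthogonal to $K_\infty$ into $K_i^\perp$.

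The delicate step is $(i)$, which requires the algebraic decomposition $\mathcal{C}\otimes V = (\mathcal{C}\otimes V)^G \oplus \bigl((\mathcal{C}\otimes V)\cap K_\infty^\perp\bigr)$. The natural candidate is $w = Pw + (1-P)w$, where $P = \frac{1}{\mu(G)}\int_G (g_\gamma \otimes \rho(\gamma))\, d\mu(\gamma)$ is the orthogonal projection onto $(H_\infty\otimes V)^G$. This decomposition lies in $\mathcal{C}\otimes V$ provided $P(\mathcal{C}\otimes V)\subset \mathcal{C}\otimes V$. In the applications of this paper $\mathcal{C}$ is $C(P_\infty)$ or $C_0(P_\infty)$ and $G$ is a compact Lie group acting continuously, so Bochner averaging of continuous (compactly supported) functions against the continuous matrix coefficients of $\rho$ stays in $\mathcal{C}$; hence the closure property, and $(i)$, holds.

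The main (essentially only) obstacle is this bookkeeping for hypothesis $(i)$. If $\mathcal{C}$ failed to be closed under $P$, the workaround would be to enlarge the dense subspace to $\widetilde{\mathcal{C}} := P(\mathcal{C}\otimes V) \oplus (1-P)(\mathcal{C}\otimes V)$ and to set $\widetilde{\Phi}_i^V(Pw_1 + (1-P)w_2) := P_i \Phi_i^V(w_1) + (1-P_i)\Phi_i^V(w_2)$, where $P_i$ is the analogous averaging projection on $H_i\otimes V$. The $G$-equivariance of $\Phi_i^V$ implies $P_i \Phi_i^V = \Phi_i^V P$ on the $G$-invariant subspace $\mathcal{C}\otimes V$, so $\widetilde{\Phi}_i^V$ is well defined, agrees with $\Phi_i^V$ on $\mathcal{C}\otimes V$, and satisfies the normalization \eqref{conv str}; Lemma \ref{lem conv str} then lets us use $\widetilde{\Phi}_i^V$ in place of $\Phi_i^V$. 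Once $(i)$--$(iv)$ are in hand, Proposition \ref{subspace conv3} yields $\Sigma_i^V|_{K_i} \str \Sigma_\infty^V|_{K_\infty}$ (resp.\ $\cpt$), which is precisely $\Sigma_i^\rho \str \Sigma_\infty^\rho$ (resp.\ $\cpt$).
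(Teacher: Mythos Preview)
Your approach is exactly the one the paper takes: the paper's ``proof'' is the single sentence ``By Propositions \ref{tensor conv2} and \ref{subspace conv3}, we have the following result,'' with no further details. You are in fact more careful than the paper, since you actually verify the hypotheses $(i)$--$(iv)$ of the closed-subspace subsection and flag the only nontrivial point, namely that $(i)$ requires the averaging projector $P$ to preserve $\mathcal{C}\otimes V$; this holds in the paper's applications (where $\mathcal{C}=C(P_\infty)$ or $C_0(P_\infty)$) and your workaround via $\widetilde{\mathcal{C}}$ and Lemma \ref{lem conv str} is a clean way to handle the abstract case.
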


\section{Main result}\label{sec main}
Combining Propositions \ref{replace of Phi} 
and \ref{G-subspace conv} 
we have the following result. 
\begin{thm}
Let $G$ be a compact Lie group, 
\begin{align*}
&(P_i,d_i,\nu_i) \in \mathcal{M}(n,\kappa,D) 
,\quad (P_\infty,d_\infty,\nu_\infty)\in 
\overline{\mathcal{M}(n,\kappa,D)}\\
({\rm resp.}\ &(P_i,d_i,\nu_i,p_i) \in \mathcal{M}(n,\kappa), \quad
(P_\infty,d_\infty,\nu_\infty,p_\infty) \in \overline{\mathcal{M}(n,\kappa)} )
\end{align*}
have isometric $G$-actions and 
\begin{align*}
(P_i,d_i,\nu_i) &\GmGH 
(P_\infty,d_\infty,\nu_\infty).\\
({\rm resp.}\, (P_i,d_i,\nu_i,p_i) &\GmGH (P_\infty,d_\infty,\nu_\infty).)
\end{align*}
Let $\Sigma_i$ be the spectral structures 
generated by $\Delta_{\nu_i}$ 
and $(\rho,V)$ be a finite dimensional unitary representation of $G$. 
Then $\Sigma_i^\rho \cpt \Sigma_\infty^\rho$ 
(resp. $\Sigma_i^\rho \str \Sigma_\infty^\rho$) as $i\to \infty$. 
\label{main 9-1}
\end{thm}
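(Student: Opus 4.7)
The plan is to reduce Theorem \ref{main 9-1} to an application of the $G$-equivariant spectral machinery assembled in the preceding sections. The starting point is that an asymptotically $G$-equivariant measured Gromov--Hausdorff convergent sequence is, in particular, measured Gromov--Hausdorff convergent: the Borel $\varepsilon_i$-$G$-equivariant Hausdorff approximations $\phi_i$ are Borel $\varepsilon_i$-isometries, and the vague convergence ${\phi_i}_*\nu_i \to \nu_\infty$ is the same statement in both settings. Therefore the theorems of Cheeger--Colding and Kuwae--Shioya recalled in Section \ref{kuwae shioya} apply and give $\Sigma_i \cpt \Sigma_\infty$ in the compact case and $\Sigma_i \str \Sigma_\infty$ in the pointed case, where strong/weak convergence of vectors is defined using the maps $\Phi_i \colon \mathcal{C} \to H_i$ from Section \ref{kuwae shioya}.

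The obstruction is that these $\Phi_i$ need not be $G$-equivariant, whereas Proposition \ref{G-subspace conv} requires $G$-equivariance in order to pass to the $G$-invariant subspaces $(H_i \otimes V)^G$. My next step would be to replace $\Phi_i$ by the averaged maps $\hat{\Phi}_i$ from Proposition \ref{replace of Phi}, which are $G$-equivariant and satisfy $\lim_{i\to\infty}\| \Phi_i(f) - \hat{\Phi}_i(f)\|_{H_i} = 0$ for every $f \in \mathcal{C}$. By Lemma \ref{lem conv str}, strong convergence with respect to $\{\hat{\Phi}_i\}$ agrees with that defined using $\{\Phi_i\}$; weak convergence is characterized in terms of strong convergence and so also agrees; consequently the Mosco and compact convergence of the quadratic forms, and therefore the compact/strong convergence of the full spectral structures, are preserved when the approximating maps are switched.

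At this point all hypotheses of Section \ref{G-inv} are in force: $G$ acts isometrically on each $H_i = L^2(P_i,\nu_i)$ because $\nu_i$ is $G$-invariant (for $i < \infty$ by the Riemannian measure construction, and for $i=\infty$ by the vague-limit argument at the beginning of Section 7.2 showing ${R_\gamma}_*\nu_\infty = \nu_\infty$); the test space $\mathcal{C}$, equal to $C(P_\infty)$ or $C_0(P_\infty)$, is stable under the $G$-action since $G$ is compact and acts by homeomorphisms; and $\Delta_{\nu_i}$ commutes with the $G$-action, so the tensor extension $\Delta_{\nu_i}^V$ preserves $(H_i \otimes V)^G$. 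Applying Proposition \ref{G-subspace conv} then yields $\Sigma_i^\rho \cpt \Sigma_\infty^\rho$ in the compact case and $\Sigma_i^\rho \str \Sigma_\infty^\rho$ in the pointed case, completing the proof.

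The substantive work has been discharged in the earlier sections: the $G$-equivariant averaging (Proposition \ref{replace of Phi}), the tensor and subspace stability of spectral convergence (Propositions \ref{tensor conv2} and \ref{subspace conv3}), and the basic Cheeger--Colding / Kuwae--Shioya convergence. The only delicate point in the assembly is checking that replacing $\Phi_i$ by $\hat{\Phi}_i$ does not alter the notion of compact (respectively strong) spectral convergence, which I expect to be the main place where care is required since the definition of compact convergence mixes a Mosco condition with a precompactness condition; Lemma \ref{lem conv str} together with the observation that precompactness of sequences of vectors is a statement internal to the Hilbert spaces $H_i$ (not referring to $\Phi_i$) is exactly what makes this step go through uniformly in the compact and pointed noncompact settings.
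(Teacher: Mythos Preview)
Your proposal is correct and follows essentially the same route as the paper: the theorem is stated there as an immediate consequence of combining Proposition \ref{replace of Phi} with Proposition \ref{G-subspace conv}, and you have simply unpacked the intermediate steps (invoking the Cheeger--Colding/Kuwae--Shioya convergence and Lemma \ref{lem conv str}) that make this combination legitimate. One small wording issue: the precompactness clause in the definition of compact convergence does refer to strong convergence and hence to the approximating maps, so it is not ``internal to the $H_i$'' as you put it; but since Lemma \ref{lem conv str} shows the two notions of strong convergence coincide, your conclusion stands.
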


Next we show the convergence of the 
eigenvalues and eigenfunctions. 
For $(P,d,\nu) \in 
\overline{\mathcal{M}(n,\kappa,D)}$ 
with an isometric $G$-action, 
$(L^2(P,\nu)\otimes V)^G$ may be 
of finite dimension. 
In this case, the $j$-th eigenvalue of 
the Laplacian acting on 
$(L^2(P,\nu)\otimes V)^G$ 
is defined to be infinity if 
\begin{align*}
j>\dim (L^2(P,\nu)\otimes V)^G.
\end{align*}
\begin{thm}
Let $G$ be a compact Lie group, 
$(P_i,d_i,\nu_i) \in 
\overline{\mathcal{M}(n,\kappa,D)}$ 
have isometric $G$-actions for 
all $i\in\Z_{\ge 0}\cup\{\infty\}$ and 
\begin{align*}
(P_i,d_i,\nu_i) \GmGH 
(P_\infty,d_\infty,\nu_\infty).
\end{align*}
Denote by $\lambda_{i,j}^{\rho}$ the $j$-th 
eigenvalue of 
$\Delta_{\nu_i}\colon (\mathcal{D}(\Delta_{\nu_i})\otimes V)^G 
\to (L^2(P_i)\otimes V)^G$ with maultiplicity 
and let $\{ f_{\infty,j}\}_{j=1}^N$ be the complete 
orthonormal system of 
$(L^2(P_\infty)\otimes V)^G$, where 
$N=\dim (L^2(P_\infty)\otimes V)^G$, such that 
$\Delta_{\nu_\infty}f_{\infty,j}
=\lambda_{\infty,j}^{\rho}f_{\infty,j}$. 
Then we have 
\begin{align*}
\lim_{i\to\infty} \lambda_{i,j}^{\rho} = \lambda_{\infty,j}^{\rho}
\end{align*}
for all $j=\in\Z_{>0}$, and there are orthonormal systems 
$\{ f_{i,j}\}_{j=1}^N$ of $(L^2(P_i)\otimes V)^G$ 
such that 
$\Delta_{\nu_i}f_{i,j}=\lambda_{i,j}^{\rho}f_{i,j}$ 
and 
$\lim_{i\to\infty}\| f_{i,j}-f_{\infty,j}\circ\phi_i\|_{L^2} = 0$, 
where $\phi_i$ is defined as in Definition \ref{def mGH}. 
\label{main 9-2}
\end{thm}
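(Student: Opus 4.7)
The plan is to reduce everything to the compact convergence of spectral structures already established in Theorem \ref{main 9-1}, and then to translate the abstract Kuwae--Shioya notion of strong convergence into the explicit $L^2$ statement in the theorem. First I would apply Theorem \ref{main 9-1} to obtain $\Sigma_i^\rho \cpt \Sigma_\infty^\rho$. By the general theory developed in \cite[Section 2.6]{KuwaeShioya2003}, compact convergence of spectral structures implies convergence (with multiplicity) of the discrete eigenvalues and the existence of strongly convergent orthonormal eigenbases. Concretely, for each $j$ one has $\lim_{i\to\infty}\lambda^\rho_{i,j}=\lambda^\rho_{\infty,j}$, and one can select orthonormal eigenvectors $f_{i,j}\in (\mathcal{D}(\Delta_{\nu_i})\otimes V)^G$ of $\lambda^\rho_{i,j}$ such that $f_{i,j}\to f_{\infty,j}$ strongly in the sense of Definition \ref{def spec}(1), where strong convergence is formulated relative to the $G$-equivariant maps $\hat{\Phi}_i^V$ restricted to $(\mathcal{C}\otimes V)^G$ used in Section \ref{G-inv}.

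Next I would translate this back to the non-equivariant $\Phi_i^V$. By Proposition \ref{replace of Phi}, for every $\tilde u\in(\mathcal{C}\otimes V)^G$ one has $\|\Phi_i^V(\tilde u)-\hat{\Phi}_i^V(\tilde u)\|_{L^2}\to 0$, so Lemma \ref{lem conv str} gives that strong convergence with respect to $\hat{\Phi}_i^V$ is equivalent to strong convergence with respect to $\Phi_i^V$. Unwinding the definition, there exists a sequence $\{\tilde u_{j,k}\}_k\subset (\mathcal{C}\otimes V)^G$ with $\tilde u_{j,k}\to f_{\infty,j}$ in $L^2(P_\infty,\nu_\infty)\otimes V$ as $k\to\infty$, and
\begin{align*}
\lim_{k\to\infty}\limsup_{i\to\infty}\|\Phi_i^V(\tilde u_{j,k})-f_{i,j}\|_{L^2(P_i)}=0.
\end{align*}
Since $\Phi_i^V(\tilde u_{j,k})=\tilde u_{j,k}\circ\phi_i$ pointwise, the statement $\|f_{i,j}-f_{\infty,j}\circ\phi_i\|_{L^2(P_i)}\to 0$ will follow once we can replace $\tilde u_{j,k}\circ\phi_i$ by $f_{\infty,j}\circ\phi_i$ in the limit.

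For this, I would use the triangle inequality
\begin{align*}
\|f_{i,j}-f_{\infty,j}\circ\phi_i\|_{L^2(P_i)}
\le \|f_{i,j}-\Phi_i^V(\tilde u_{j,k})\|_{L^2(P_i)}
+\|(\tilde u_{j,k}-f_{\infty,j})\circ\phi_i\|_{L^2(P_i)}
\end{align*}
and control the second term by writing $\|(\tilde u_{j,k}-f_{\infty,j})\circ\phi_i\|_{L^2(P_i)}^2=\int_{P_\infty}|\tilde u_{j,k}-f_{\infty,j}|^2\,d(\phi_i)_*\nu_i$. The vague convergence $(\phi_i)_*\nu_i\to\nu_\infty$, combined with the fact that $|\tilde u_{j,k}-f_{\infty,j}|^2$ is continuous on the compact space $P_\infty$, yields
\begin{align*}
\lim_{i\to\infty}\|(\tilde u_{j,k}-f_{\infty,j})\circ\phi_i\|_{L^2(P_i)}^2
=\|\tilde u_{j,k}-f_{\infty,j}\|_{L^2(P_\infty)}^2,
\end{align*}
which tends to $0$ as $k\to\infty$. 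Taking $\limsup_i$ then $\lim_k$ gives the desired conclusion.

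The main obstacle is justifying that $f_{\infty,j}\circ\phi_i$ is a meaningful $L^2$-function and that the composition with continuous test functions $\tilde u_{j,k}$ behaves well under vague measure convergence. This forces us to know that the limiting eigenfunctions $f_{\infty,j}$ admit continuous representatives, which on $\overline{\mathcal{M}(n,\kappa,D)}$ is ensured by the Cheeger--Colding regularity theory for eigenfunctions on Ricci limit spaces; once this is in hand, the approximating $\tilde u_{j,k}$ can be chosen in $C(P_\infty)\otimes V$ projected to the $G$-invariant part via averaging, and the vague convergence argument closes the proof.
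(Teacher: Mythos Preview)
Your proposal is correct and follows essentially the same approach as the paper: both reduce to the compact convergence $\Sigma_i^\rho\cpt\Sigma_\infty^\rho$ from Theorem \ref{main 9-1}, invoke the Kuwae--Shioya eigenvalue/eigenfunction convergence result (the paper cites \cite[Corollary 2.5]{KuwaeShioya2003}), and use the Cheeger--Colding regularity \cite[Theorem 6.27]{Cheeger-Colding3} to know that $f_{\infty,j}\in (C(P_\infty)\otimes V)^G$. The only difference is that once you know $f_{\infty,j}$ is continuous you can take $\tilde u_{j,k}=f_{\infty,j}$ for all $k$ directly in Definition \ref{def spec}(1), which immediately gives $\|f_{i,j}-f_{\infty,j}\circ\phi_i\|_{L^2}\to 0$ without the separate vague-convergence estimate on $\|(\tilde u_{j,k}-f_{\infty,j})\circ\phi_i\|_{L^2}$.
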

\begin{proof}
Let $\hat{\Phi}_i$ be as in 
Proposition \ref{replace of Phi} 
and $\hat{\Phi}_i^\rho$ be as in Section \ref{G-inv}. 
By \cite[Theorem 6.27]{Cheeger-Colding3}, 
every eigenfunctions $f_{\infty,j}$ is in $\mathcal{C}=(C(P_\infty)\otimes V)^G$. 
Then by applying \cite[Corollary 2.5]{KuwaeShioya2003} 
to our case, 
we have the assertion. 
\end{proof}

Let $P$ be a principal $G$-bundle, 
$A$ be a $G$-connection on $P$, 
$(\rho,V)$ be a real unitary irreducible 
representation of $G$. 
Put $E_\rho=P\times_\rho V$ and 
let $\nabla=\nabla^A$ is induced from 
$A$, and $\lambda_j^\nabla$ 
be the $j$-th eigenvalue of the rough Laplacian 
$\nabla^*\nabla$. 
As a consequence of the above theorem, 
we have the following result. 
\begin{thm}
Let $G$ be a compact Lie group. 
For any $\kappa\in\R,D,N>0$, 
there exist constants $0\le C_j$ 
depending only on 
$j,n,\kappa,D,N,G,\rho,V$ such that 
$\lim_{j\to\infty}C_j=\infty$ and 
the following holds. 
For any closed Riemannian manifold 
$(X,g)$  
of dimension $n$ and principal 
$G$-bundle $\pi\colon P\to X$ with 
the $G$-connection $A$ such that 
\begin{align*}
&{\rm dim}\, X = n,\quad 
{\rm Ric}_g \ge \kappa g,\quad 
{\rm diam}\, X \le D,\\
&\| (d^{\nabla^A})^* F^A \|_{L^\infty} \le N, \quad
\| F^A \|_{L^\infty} \le N,
\end{align*}
we have 
\begin{align*}
\lambda_j^{\nabla^{A}} \ge C_j .
\end{align*}
\label{bdd of eigenvalues}
\end{thm}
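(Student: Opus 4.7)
The plan is to argue by contradiction, reducing the statement to the spectral convergence results established earlier. Define
\begin{align*}
C_j := \inf \lambda_j^{\nabla^A},
\end{align*}
where the infimum runs over all admissible data $(X,g,P,A)$ satisfying the hypotheses of the theorem. Each $C_j$ lies in $[0,\infty)$ since the eigenvalues of the rough Laplacian are nonnegative and finite. By construction $\lambda_j^{\nabla^A}\ge C_j$, so the whole problem reduces to showing that $C_j\to\infty$ as $j\to\infty$.

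Suppose for contradiction that this fails. Then there exist $M>0$ and a strictly increasing sequence $j_k\to\infty$ together with admissible data $(X_k,g_k,P_k,A_k)$ such that $\lambda_{j_k}^{\nabla^{A_k}}\le M$ for every $k$. First I would translate this to a statement about the scalar Laplacian $\Delta^{h_k}$ on the total space. By Section \ref{eigen decomp} (and by decomposing $V$ into irreducibles when $\rho$ is reducible), the eigenvalues of $\Delta^{h_k}$ acting on $(L^2(P_k)\otimes V)^G$ differ from the eigenvalues of $\nabla^{*}\nabla$ on $\Gamma(E_\rho)$ by the bounded self-adjoint shift $-\sigma^{\alpha\beta}\rho_*(e_\beta)\rho_*(e_\alpha)$, whose operator norm depends only on $(\rho,V)$ and $\sigma$. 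Consequently, on $P_k$ we obtain at least $j_k$ eigenvalues (counted with multiplicity) of $\Delta^{h_k}|_{(L^2\otimes V)^G}$ that are bounded above by $M'':=M+\|\sigma^{\alpha\beta}\rho_*(e_\beta)\rho_*(e_\alpha)\|$.

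Next I would apply the precompactness result, Theorem \ref{precompact}: the curvature computation of Section \ref{curvature} combined with $\|F^A\|_{L^\infty},\|(d^{\nabla^A})^*F^A\|_{L^\infty}\le N$ and ${\rm Ric}_g\ge\kappa g$ yields a uniform lower Ricci bound $\kappa'$ on $(P_k,h_k)$, while the Riemannian submersion structure and compactness of $(G,\sigma)$ give a uniform diameter bound $D':=D+{\rm diam}(G,\sigma)$. Hence $(P_k,d_{h_k},\nu_{h_k})\in\overline{\mathcal{M}(n+\dim G,\kappa',D')}$, and after passing to a subsequence we obtain
\begin{align*}
(P_k,d_{h_k},\nu_{h_k})\GmGH (P_\infty,d_\infty,\nu_\infty).
\end{align*}

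Now I would invoke Theorem \ref{main 9-2}: for each fixed $j\in\Z_{>0}$ the $j$-th eigenvalue of $\Delta^{h_k}$ restricted to $(L^2(P_k)\otimes V)^G$ converges to $\lambda_{\infty,j}^\rho$, with the convention $\lambda_{\infty,j}^\rho=\infty$ when $j$ exceeds $\dim(L^2(P_\infty)\otimes V)^G$. In either case $\lambda_{\infty,j}^\rho\to\infty$ as $j\to\infty$, so I can pick $j_0$ with $\lambda_{\infty,j_0}^\rho>M''+1$. For $k$ large enough that $j_k\ge j_0$ and the eigenvalue convergence has settled, the monotonicity of eigenvalues gives $\lambda_{k,j_0}^\rho\le\lambda_{k,j_k}^\rho\le M''$, contradicting $\lambda_{k,j_0}^\rho\to\lambda_{\infty,j_0}^\rho>M''$.

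The main obstacle is bookkeeping rather than a new analytic input: one must verify that the uniform Ricci and diameter bounds on $(P,h)$ genuinely follow from the bounds on $F^A$ and $(d^{\nabla^A})^*F^A$ (so that Theorem \ref{precompact} applies), handle the possibly reducible representation $\rho$ by decomposition into irreducibles, and exploit the diagonal argument $j_0$ fixed while $j_k\to\infty$ so that the per-index convergence of Theorem \ref{main 9-2} suffices without needing a stronger uniform statement.
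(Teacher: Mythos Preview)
Your proof is correct and follows essentially the same route as the paper's: define $C_j$ as the infimum, argue by contradiction with a sequence $(X_k,g_k,P_k,A_k)$ having $\lambda_{j_k}^{\nabla^{A_k}}$ bounded, pass to a $G$-equivariant measured Gromov--Hausdorff convergent subsequence via Theorem~\ref{precompact}, translate to the scalar Laplacian on the total space via the Casimir shift, and use the spectral convergence of Theorem~\ref{main 9-2} to conclude that all eigenvalues of the limit are bounded, a contradiction. Your version is slightly more explicit about the diameter bound on $P$, the reducible case, and the fixed-$j_0$ diagonal step, but these are exactly the details the paper leaves implicit.
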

\begin{proof}
For fixed $n,\kappa,D,N$, 
let $\mathcal{M}_G(n,\kappa,D,N,\sigma)$ 
be the space consists of isometric classes of 
the Riemannian manifold $(P,h(g,A,\sigma))$ 
satisfying the assumptions. 
By Theorem \ref{precompact}, 
$\mathcal{M}_G(n,\kappa,D,N,\sigma)$ is precompact with respect to 
the asymptotically $G$-equivariant measured Gromov-Hausdorff topology. 
The function which maps 
$(P,h(g,A,\sigma))$ 
to $\lambda_j^\nabla=\lambda_j^\rho-\chi_{\sigma,\rho}$ is continuous 
by Section \ref{principal metric} and Theorem \ref{main 9-2}. 
Now we deny the conclusion. 
Suppose there are a constant $B>0$, 
an increasing sequence $1\le j_1<j_2<j_3<\cdots$, 
principal bundles 
$P_k\to X_k$ with $G$-connections $A_k$ 
such that $\lambda_{j_k}^{\nabla^{A_k}} \le B$ 
for every $k$. 
Then by the precompactness of 
$\mathcal{M}_G(n,\kappa,D,N,\sigma)$, 
there exists the limit $(P,d,\nu)$ of 
some subsequences of $\{ P_k\}_k$. 
On $(P,d,\nu)$, one can check that $\lambda_{j}^\rho\le B+\chi_{\rho,\sigma}$ for all $j$, 
hence we have the contradiction. 
\end{proof}

To show the uniform upper bound 
of the eigenvalues, we need to add some assumptions 
since the eigenvalues may be $\infty$ 
if $(L^2(P)\otimes V)^G$ is of finite dimension. 

\begin{lem}
Let $G$ be a compact topological group and 
$(P,d)$ be a proper metric space 
with an isometric $G$-action. 
Then for any 
orthogonal representation $(\rho,V)$ of $G$, 
\begin{align*}
\dim (C_0(P)\otimes V)^G
\ge \sharp\{ \bar{u}\in P/G;\, V^{G_u}\neq\{ 0\}\}
\end{align*}
holds, where $G_u=\{ \gamma\in G;\, u\gamma =u\}$. 
\label{lem infinite dim}
\end{lem}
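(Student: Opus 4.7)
The plan is to identify $(C_0(P)\otimes V)^G$ with the space of compactly supported continuous equivariant maps $f\colon P\to V$ satisfying $f(u\gamma)=\rho(\gamma^{-1})f(u)$ for all $u\in P$ and $\gamma\in G$, and then, for each orbit $\bar u$ with $V^{G_u}\neq\{0\}$, to construct a nonzero element of this space supported in a small saturated neighborhood of the orbit. Linear independence of finitely many such elements corresponding to distinct orbits will then follow from the disjointness of their supports, and this yields the claimed inequality of dimensions (for any finite subset of the index set, and hence for its full cardinality).

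Concretely, given an orbit $\bar u$ with $V^{G_u}\neq\{0\}$, I would pick $v\in V^{G_u}\setminus\{0\}$ and a nonnegative $\psi\in C_0(P)$ with $\psi(u)=1$, and then define
\begin{align*}
f(u'):=\int_G \psi(u'\gamma)\,\rho(\gamma)v\,d\mu(\gamma),
\end{align*}
where $\mu$ is the normalized bi-invariant Haar measure on $G$. Equivariance $f(u'\gamma_0)=\rho(\gamma_0)^{-1}f(u')$ is immediate from the change of variables $\gamma\mapsto\gamma_0^{-1}\gamma$ using left-invariance of $\mu$; continuity of $f$ follows from dominated convergence; and $\mathrm{supp}(f)$ is contained in the compact set $\mathrm{supp}(\psi)\cdot G$, so $f\in(C_0(P)\otimes V)^G$.

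The main obstacle is to ensure $f(u)\neq 0$. The idea is that since $v\in V^{G_u}$ and $\rho$ is continuous, for any $\varepsilon>0$ there is an open neighborhood $N$ of $G_u$ in $G$ with $\|\rho(\gamma)v-v\|<\varepsilon$ for $\gamma\in N$. The compact set $\{u\gamma:\gamma\in G\setminus N\}$ does not contain $u$, since $u\gamma=u$ forces $\gamma\in G_u\subset N$, so (using properness to separate closed sets from points in the metric space $P$) I can choose $\psi$ with support in a neighborhood of $u$ disjoint from this compact set. Then $\gamma\mapsto\psi(u\gamma)$ is supported in $N$, and
\begin{align*}
\bigl\|f(u)-c\,v\bigr\|\le \varepsilon\, c,\qquad c:=\int_G\psi(u\gamma)\,d\mu(\gamma)>0,
\end{align*}
so choosing $\varepsilon<1$ yields $f(u)\neq 0$.

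Finally, for linear independence, given finitely many distinct orbits $\bar u_1,\dots,\bar u_k$ in the index set, I use that $P/G$ is Hausdorff (because $G$ is compact acting on a Hausdorff $P$) to pick pairwise disjoint open neighborhoods $W_1,\dots,W_k\subset P/G$ of $\bar u_1,\dots,\bar u_k$, and then choose the bump functions $\psi_i$ above with $\mathrm{supp}(\psi_i)\subset \pi^{-1}(W_i)$. The resulting $f_i$ satisfy $\mathrm{supp}(f_i)\subset\pi^{-1}(W_i)\cdot G=\pi^{-1}(W_i)$, which are pairwise disjoint, so $f_1,\dots,f_k$ are linearly independent in $(C_0(P)\otimes V)^G$. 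Since this applies to every finite subset of $\{\bar u\in P/G:V^{G_u}\neq\{0\}\}$, the stated dimension inequality follows. The delicate point throughout is the support-control argument in the previous paragraph, which depends essentially on the properness of $P$ together with continuity of both the action and $\rho$.
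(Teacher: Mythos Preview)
Your proposal is correct and follows essentially the same approach as the paper's proof: both construct the equivariant sections by the averaging formula $\int_G\psi(u'\gamma)\,\rho(\gamma)v\,d\mu(\gamma)$, show nonvanishing at $u$ by arranging that $\gamma\mapsto\psi(u\gamma)$ is supported where $\rho(\gamma)v\approx v$, and deduce linear independence from disjointness of supports. The only trivial slip is that your final estimate $\|f(u)-cv\|\le\varepsilon c$ requires $\varepsilon<\|v\|$ rather than $\varepsilon<1$ (or normalize $v$ first, as the paper does).
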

\begin{proof}
Take any finite subset 
\begin{align*}
\{ \bar{u}_1,\bar{u}_2, \ldots,\bar{u}_N\}\subset \{ \bar{u}\in P/G;\, V^{G_u}\neq\{ 0\}\}.
\end{align*}
Then we can take $\delta>0$ such that 
$B(\bar{u}_i,2\delta)\cap B(\bar{u}_j,2\delta)=\emptyset$ 
for any $i\neq j$. 
Define $f_i\in C_0(P)$ by 
\begin{align*}
f_i(u):=\frac{d\left( u,B(u_i,2\delta)^c\right)}{d\left( u,\overline{B(u_i,\delta)}\right) + d\left( u,B(u_i,2\delta)^c\right)},
\end{align*}
then $f_i$ is nonnegative on $P$, 
$f_i|_{\overline{B(u_i,\delta)}}\equiv 1$ and 
${\rm supp}(f_i)\subset B(u_i,2\delta)$ holds. 
Let $\mu_G$ be the Haar 
measure on $G$.
Take $v_i\in V^{G_{u_i}}$ such that $\| v_i\|=1$ and 
put 
\begin{align*}
\hat{f}_i(u):=\int_G f_i(u\gamma) \rho(\gamma)v_i d\mu_G (\gamma)\in V, 
\end{align*}
then one can check that $\hat{f}_i\in (C_0(P)\otimes V)^G$ and 
${\rm supp}(\hat{f}_i)\subset \pi^{-1}(B(\bar{u}_i,2\delta))$, 
where $\pi\colon P\to P/G$ is the quotient map. 
Next we show the linear independence of 
$\{ \hat{f}_i\}_{i=1}^N$. 
Since ${\rm supp}(\hat{f}_i)\cap 
{\rm supp}(\hat{f}_j)=\emptyset$ if $i\neq  j$, 
it suffices to show that $\hat{f}_i(u_i)\neq 0$. 

Define the continuous maps 
$\varphi_P\colon G_{u_i}\backslash G\to P$ and 
$\varphi_V\colon G_{u_i}\backslash G\to \R$ by 
$\varphi_P(G_{u_i}\cdot \gamma)= u_i\gamma$ and 
$\varphi_V(G_{u_i}\cdot \gamma)= 
\langle \rho(\gamma)v_i,v_i\rangle_V$, respectively. 
Since $\varphi_P$ is a homeomorphism, 
then $\varphi_V\circ\varphi_P^{-1}$ is continuous 
therefore 
we can replace $\delta$ by the smaller one 
such that 
\begin{align*}
\left| 
\langle \rho(\gamma)v_i, v_i\rangle_V -\| v_i\|_V^2
\right| < \frac{1}{10}
\end{align*}
holds for 
any $u_i\gamma\in B(u_i,2\delta)$. 
Then we can deduce that $f_i(u_i\gamma)\neq 0$ implies 
$\left| 
\langle \rho(\gamma)v_i, v_i\rangle_V -\| v_i\|_V^2
\right|<\frac{1}{10}$, therefore, 
\begin{align*}
\left| f_i(u_i \gamma)\langle \rho(\gamma)v_i, v_i\rangle_V  - f_i(u_i \gamma) \| v_i\|_V^2\right|
\le \frac{ f_i(u_i \gamma) }{10}
\end{align*}
and 
\begin{align*}
\left| \langle \hat{f}_i(u_i),v_i\rangle_V 
- \int_G f_i(u_i\gamma) \| v_i\|_V^2 
d\mu_G (\gamma)\right|
\le \frac{1}{10}\int_G f_i(u_i\gamma) 
d\mu_G (\gamma)
\end{align*}
holds. 
Since 
\begin{align*}
\int_G f_i(u_i\gamma) \| v_i\|_V^2 d\mu_G (\gamma)
= \int_G f_i(u_i\gamma) d\mu_G (\gamma),
\end{align*}
we have 
\begin{align*}
\langle \hat{f}_i(u_i),v_i\rangle_V 
\ge \frac{9}{10}\int_G f_i(u_i\gamma) d\mu_G (\gamma).
\end{align*}
Define $\psi\colon G\to P$ by 
$\psi(\gamma)=u_i\gamma$. Then 
\begin{align*}
\int_G f_i(u_i\gamma) d\mu_G (\gamma)
\ge 
\mu_G (\psi^{-1}(B(u_i,\delta)) )>0
\end{align*}
holds since $\psi^{-1}(B(u_i,\delta))$ is 
nonempty and open, 
hence $\langle \hat{f}_i(u_i),v_i\rangle_V \neq 0$ 
holds. 
\end{proof}

\begin{rem}
\normalfont
In fact, we can show that 
\begin{align*}
\dim (C_0(P)\otimes V)^G
= \sum_{\bar{u}\in P/G}\dim V^{G_u}\in
\N\cup \{ \infty\}
\end{align*}
by more precise argument. 
\end{rem}

For a compact Lie group $G$ and 
an orthogonal representation $(\rho,V)$, 
put 
\begin{align*}
S(G)&:=\{ H\subset G;\, H\mbox{ is a closed subgroup}\},\\
S(G,V)&:=\{ H\in S(G);\, V^H=\{ 0\}\},
\end{align*}
and for a metric space $(P,d)$ with 
an isometric $G$-action, put 
\begin{align*}
\delta_V(u)
&:= \inf_{H\in S(G,V)} 
\sup_{h\in H}d(u,uh).
\end{align*}
Now we have the following uniform 
upper bound of eigenvalues.
\begin{thm}
Let $G$ be a compact Lie group. 
For any $\kappa\in\R,D,N>0$, 
there exist constants $0\le C_j\le C'_j$ 
depending only on 
$j,n,\kappa,D,N,G,\rho,V$ such that 
$\lim_{j\to\infty}C_j=\infty$ and 
the following holds. 
For any closed Riemannian manifold 
$(X,g)$  
of dimension $n$ and principal 
$G$-bundle $\pi\colon P\to X$ with 
the $G$-connection $A$ such that 
\begin{align*}
&{\rm dim}\, X = n,\quad 
{\rm Ric}_g \ge \kappa g,\quad 
D^{-1} \le {\rm diam}\, X \le D, \\
&\| (d^{\nabla^A})^* F^A \|_{L^\infty} \le N, \quad
\| F^A \|_{L^\infty} \le N, \quad 
\sup_{P}\delta_V \ge N^{-1}
\end{align*}
we have 
\begin{align*}
C_j \le \lambda_j^{\nabla^{A}} \le C'_j.
\end{align*}
\label{bdd of eigenvalues 2}
\end{thm}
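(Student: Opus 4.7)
The lower bound $\lambda_j^{\nabla^A}\ge C_j$ is exactly the content of Theorem \ref{bdd of eigenvalues}, so only the upper bound $\lambda_j^{\nabla^A}\le C'_j$ uses the two extra hypotheses $\mathrm{diam}\,X\ge D^{-1}$ and $\sup_P\delta_V\ge N^{-1}$. The plan is to argue by contradiction, paralleling the compactness scheme of Theorem \ref{bdd of eigenvalues}. If the upper bound fails, there is a fixed $j$ and a sequence $(X_k,g_k,P_k,A_k)$ meeting all the hypotheses along which $\lambda_j^{\nabla^{A_k}}\to\infty$. Theorem \ref{precompact} produces a subsequential GmGH limit $(P_\infty,d_\infty,\nu_\infty)$ carrying an isometric $G$-action, and Theorem \ref{main 9-2} forces $\lambda_j^{\rho,\infty}=\infty$; by our convention this means $\dim(L^2(P_\infty,\nu_\infty)\otimes V)^G<j$. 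Thus it suffices to show that this dimension is infinite, and by Lemma \ref{lem infinite dim} (applied to $C_0(P_\infty)\subset L^2$) this reduces to producing infinitely many orbits $\bar u\in P_\infty/G$ with $V^{G_u}\neq\{0\}$.

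The first step is to pass the pointwise bound $\sup_{P_k}\delta_V\ge N^{-1}$ to the limit. I would pick $u^*_k\in P_k$ with $\delta_V(u^*_k)\ge N^{-1}$ and, after a subsequence, use the $\varepsilon_k$-$G$-equivariant Hausdorff approximations $\phi_k\colon P_k\to P_\infty$ together with compactness of $P_\infty$ to arrange $\phi_k(u^*_k)\to u^*_\infty\in P_\infty$. For any fixed $H\in S(G,V)$, I would choose $h_k\in H$ almost realizing $\sup_{h\in H}d_k(u^*_k,u^*_kh)$, extract $h_k\to h_\infty\in H$ using compactness of $H\subset G$, and then combine almost-equivariance $d_\infty(\phi_k(u\gamma),\phi_k(u)\gamma)<\varepsilon_k$ with equicontinuity of $(\gamma,\gamma')\mapsto d_k(u^*_k\gamma,u^*_k\gamma')$ (which is uniform in $k$ because orbit distances are dominated by the bi-invariant metric $\sigma$, exactly as in the proof of Theorem \ref{precompact}) to conclude $d_\infty(u^*_\infty,u^*_\infty h_\infty)\ge N^{-1}$. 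Since $H\in S(G,V)$ was arbitrary, $\delta_V(u^*_\infty)\ge N^{-1}$.

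Next, $\delta_V$ is $2$-Lipschitz on $P_\infty$ by a direct triangle inequality using the isometric action, namely $|d_\infty(u_0,u_0h)-d_\infty(u_1,u_1h)|\le 2d_\infty(u_0,u_1)$, so $\delta_V\ge N^{-1}/2>0$ throughout $B(u^*_\infty,N^{-1}/4)$. Choosing $H=G_u$ in the infimum defining $\delta_V(u)$ shows at once that $\delta_V(u)>0$ forces $V^{G_u}\neq\{0\}$, so every orbit meeting this ball contributes a point of the target set. The submetry property (Proposition \ref{quotient metric}) identifies $\pi_\infty(B(u^*_\infty,N^{-1}/4))$ with $B(\bar u^*_\infty,N^{-1}/4)\subset X_\infty$. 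Since $X_\infty$ is an mGH limit of connected Riemannian manifolds with $\mathrm{diam}\,X_k\ge D^{-1}$, it is a connected length space of diameter at least $D^{-1}$ and so has no isolated points; hence this open ball is infinite, yielding infinitely many orbits with $V^{G_u}\neq\{0\}$ and closing the contradiction.

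The main obstacle is the limit step for $\delta_V$: its defining infimum ranges over the non-compact family $S(G,V)$ of closed subgroups while the $\phi_k$ are only $\varepsilon_k$-equivariant, so one must interchange $\lim_k$ with $\sup_{h\in H}$ and $\inf_{H\in S(G,V)}$ carefully, leaning on compactness of each individual $H$ and on the uniform equicontinuity of the orbit maps supplied by $\sigma$. A secondary but necessary ingredient is that the Ricci limit $X_\infty$ is a length space with no isolated points, which is standard from Cheeger--Colding.
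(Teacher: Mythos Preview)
Your argument is correct and follows essentially the same route as the paper: reduce the upper bound to showing $\dim(C_0(P_\infty)\otimes V)^G=\infty$ for any GmGH limit, then combine $\sup_{P_\infty}\delta_V>0$, the $2$-Lipschitz property of $\delta_V$, the implication $\delta_V(u)>0\Rightarrow V^{G_u}\neq\{0\}$, the length-space structure of the limit together with ${\rm diam}\,X_\infty\ge D^{-1}$, and Lemma \ref{lem infinite dim}. Your worry about the ``non-compact family'' $S(G,V)$ is unfounded: since you only need a lower bound on an infimum, it suffices to bound each term $\sup_{h\in H}d_\infty(u^*_\infty,u^*_\infty h)$ from below for fixed $H$, exactly as you do; the paper simply asserts continuity of $(P,d)\mapsto\sup_P\delta_V$ under GmGH convergence without giving this detail, so your version is in fact more explicit here.
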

\begin{proof}
If we show that the 
$(C_0(P_\infty)\otimes V)^G$ 
is of infinite dimensional 
for any $(P_\infty,d_\infty)$ which 
is the asymptotically $G$-equivariant 
Gromov-Hausdorff limit of 
principal $G$-bundles satisfying 
the assumptions, then 
we obtain the upper estimate 
similarly as the proof of 
Theorem \ref{bdd of eigenvalues}. 

By the triangle inequality 
we have 
\begin{align*}
|d_\infty(u,uh)-d_\infty(u',u'h)|\le 2d_\infty(u,u')
\end{align*}
for any $u,u'\in P_\infty$ and $h\in G$, 
hence the function $\delta_V\colon 
P_\infty\to \R$ is continuous. 
Moreover we can check that 
$(P,d)\mapsto \sup_P \delta_V$ 
is continuous with respect to asymptotically 
$G$-equivariant Gromov-Hausdorff topology, 
hence we have 
$\sup_{P_\infty}\delta_V>0$ and 
${\rm diam}\, P_\infty/G>0$.
Since $P_\infty$ is the length space, 
there exists infinitely many $x\in P_\infty/G$ 
such that $\delta_V(u)>0$ for 
any $u\in P_\infty$ with $\bar{u}=x$. 
Since 
$\delta_V(u)>0$ implies 
$G_u\notin S(G,V)$, 
we can see that 
$\{ \bar{u}\in P_\infty/G;\, V^{G_u}\neq
\{ 0\}\}$ is an infinite set, 
hence $\dim (C_0(P_\infty)\otimes V)^G$ 
is infinity 
by Lemma \ref{lem infinite dim}. 
\end{proof}

Finally we describe the spectral 
convergence in the case of noncompact. 

\begin{thm}
Let $G$ be a compact Lie group, 
$(P_i,d_i,\nu_i,p_i) \in 
\overline{\mathcal{M}(n,\kappa)}$ 
have isometric $G$-actions for 
all $i\in\Z_{\ge 0}\cup\{\infty\}$ and 
\begin{align*}
(P_i,d_i,\nu_i,p_i) \GmGH 
(P_\infty,d_\infty,\nu_\infty,p_\infty).
\end{align*}
Denote by $\sigma(\Delta_{\nu_i}) \subset \R$ the 
set consists of all eigenvalues of 
$\Delta_{\nu_i}\colon (\mathcal{D}(\Delta_{\nu_i})\otimes V)^G 
\to (L^2(P_i)\otimes V)^G$. 
Then for any $\lambda\in\sigma(\Delta_{\nu_\infty}) 
$ there are $\lambda_i\in\sigma(\Delta_{\nu_i})$ 
such that $\lim_{i\to\infty}\lambda_i =\lambda$. 
\label{main 9-4}
\end{thm}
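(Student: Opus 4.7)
The plan is to reduce this noncompact spectral convergence to the strong convergence of spectral structures established in Theorem \ref{main 9-1}, together with a standard principle in the Kuwae-Shioya framework on Kuratowski lower semicontinuity of the spectrum under strong convergence.

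First, Theorem \ref{main 9-1} applied to our convergent sequence $(P_i,d_i,\nu_i,p_i) \GmGH (P_\infty,d_\infty,\nu_\infty,p_\infty)$ in $\overline{\mathcal{M}(n,\kappa)}$ yields $\Sigma_i^\rho \str \Sigma_\infty^\rho$. By definition this is the Mosco convergence of the quadratic forms $\mathcal{E}_i^\rho$ associated to the operators $A_i^\rho := \Delta_{\nu_i}|_{(L^2(P_i)\otimes V)^G}$, which by \cite[Section 2.5]{KuwaeShioya2003} is equivalent to strong convergence of the resolvents: for any $\zeta \in \C\setminus\R$ and any strongly convergent sequence $u_i \to u_\infty$, one has $(A_i^\rho - \zeta)^{-1} u_i \to (A_\infty^\rho - \zeta)^{-1} u_\infty$ strongly.

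Second, I would invoke the standard principle that strong resolvent convergence, in the Kuwae-Shioya sense across varying Hilbert spaces, implies $\sigma(A_\infty^\rho) \subset \liminf_{i\to\infty} \sigma(A_i^\rho)$ in the Kuratowski sense, which is exactly the conclusion. Concretely, given $\lambda \in \sigma(A_\infty^\rho)$, the Weyl criterion produces $u_k \in \mathcal{D}(A_\infty^\rho)$ with $\|u_k\|_{H_\infty} = 1$ and $\|(A_\infty^\rho - \lambda)u_k\|_{H_\infty} \to 0$. For each fixed $k$, the second condition of Mosco convergence yields strong approximations $u_{k,i} \to u_k$ with $\mathcal{E}_i^\rho(u_{k,i}) \to \mathcal{E}_\infty^\rho(u_k)$; combined with a diagonal argument, one extracts $\tilde u_i \in \mathcal{D}(A_i^\rho)$ of unit norm with $\|(A_i^\rho - \lambda)\tilde u_i\|_{H_i} \to 0$, whence the Weyl criterion forces $\sigma(A_i^\rho)$ to meet every neighborhood of $\lambda$ for $i$ large.

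The main obstacle is the passage from form-norm approximation, which is what Mosco convergence directly supplies, to the graph-norm approximation needed to apply the Weyl criterion to the sequence $\tilde u_i$. The cleanest way around this is a contrapositive argument through resolvents: if no $\lambda_i \in \sigma(A_i^\rho)$ accumulated at $\lambda$, then the real-valued resolvents $(A_i^\rho - \lambda)^{-1}$ would be uniformly bounded on a neighborhood of $\lambda$ for $i$ large. On the other hand, their strong limit $(A_\infty^\rho - \lambda)^{-1}$ is unbounded since $\lambda \in \sigma(A_\infty^\rho)$, producing a contradiction and yielding the desired sequence $\lambda_i \to \lambda$.
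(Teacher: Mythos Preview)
Your proposal is correct and follows essentially the same route as the paper: invoke Theorem~\ref{main 9-1} to obtain $\Sigma_i^\rho \str \Sigma_\infty^\rho$, then use the Kuwae--Shioya framework to conclude lower semicontinuity of the spectrum. The paper's proof is simply the one-line citation ``It follows from Theorem~\ref{main 9-1} and \cite[Proposition 2.5]{KuwaeShioya2003}''; your resolvent contrapositive is a sketch of what that proposition provides, so you have effectively unpacked the reference rather than taken a different path.
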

\begin{proof}
It follows from 
Theorem \ref{main 9-1} and 
\cite[Proposition 2.5]{KuwaeShioya2003}. 
\end{proof}

\section{Applications to holomorphic 
vector bundles on complex manifolds}
\label{holomorphic}
In this section we show Theorem 
\ref{main 1-2}. 
Let $(X,\omega)$ be a compact K\"ahler manifold 
and $\pi_E\colon E\to X$ be a holomorphic vector bundle 
over $X$. 
Suppose $h$ is a hermitian metric on $E$ and 
$\nabla\colon \Gamma(E)\to\Omega^1(E)$ is the Chern connection. 
Under the decomposition $\Omega^1=\Omega^{1,0}\oplus \Omega^{0,1}$, 
we have the decomposition 
$\nabla=\nabla^{1,0}+\nabla^{0,1}$. 
Let $\nabla^*,(\nabla^{1,0})^*,(\nabla^{0,1})^*$ are the formal adjoint 
of $\nabla,\nabla^{1,0},\nabla^{0,1}$, respectively. 
For a holomorphic coordinate $(U,z^1,\ldots,z^n)$ on $X$ 
we put $\omega=\sqrt{-1}g_{i\bar{j}}dz^i\wedge d\bar{z}^j$. 
Then we may write 
\begin{align*}
\nabla^* &= (\nabla^{1,0})^* + (\nabla^{0,1})^*,\\
(\nabla^{1,0})^* &= -g^{i\bar{j}}\iota_{\partial_i}\nabla_{\bar{\partial}_j},\\
(\nabla^{0,1})^* &= -g^{i\bar{j}}\iota_{\bar{\partial}_j}\nabla_{\partial_i},
\end{align*}
where $\partial_i:=\frac{\partial}{\partial z^i}$. 
Let $F^\nabla\in\Omega^{1,1}({\rm End}E)$ 
be the curvature form. 
Since we have 
\begin{align*}
(\nabla^{1,0})^*\nabla^{1,0}s
&= (\nabla^{0,1})^*\nabla^{0,1}s
+ g^{i\bar{j}} F^\nabla(\partial_i, \bar{\partial}_j)s,
\end{align*}
we obtain 
\begin{align*}
\nabla^*\nabla &= 2\Delta_{\bar{\partial}} + \Lambda_\omega F^\nabla,\\
\Delta_{\bar{\partial}} &:= (\nabla^{0,1})^*\nabla^{0,1},\\
\Lambda_\omega F^\nabla &:= g^{i\bar{j}} F(\partial_i, \bar{\partial}_j)\in\Gamma({\rm End}E). 
\end{align*}
A {\it Hermitian-Einstein connection} 
is the Chern connection $\nabla$ 
which satisfies 
\begin{align*}
\Lambda_\omega F^\nabla=\mu\cdot {\rm id}_E 
\end{align*}
for some constant $\mu\in\R$. 
In this case $\mu$ is given by 
\begin{align*}
\mu= \frac{2\pi \dim X \cdot 
c_1(E)\cdot [\omega]^{n-1}}
{{\rm rk}(E)[\omega]^n}.
\end{align*}

Hence a smooth section of $E$ is 
holomorphic if and only if 
it is the eigensection of $\nabla^*\nabla$ 
corresponding to $\mu$. 

Here, $\mu$ is determined by 
the topological invariant 
depending only on the K\"ahler class $[\omega]$  
and the first Chern class $c_1(E)$. 
It is known that the Hermitian-Einstein connections 
are Yang-Mills, hence 
$(d^{\nabla})^*F^\nabla=0$.

Now we apply Theorem \ref{bdd of eigenvalues} 
to this case. 
To apply it, we fix the dimension 
of the manifolds and, the rank of the 
holomorphic vector bundles and 
the constant $\mu$, 
assume that the connections are 
Hermitian-Einstein and 
give the uniform bound of 
the diameters, the Ricci curvatures and 
the $L^\infty$-norm of the curvature 
forms of the connections. 
Then we have the uniform lower bound 
$C_j$ of the $j$-th eigenvalue of the 
connection Laplacian such that 
$\lim_{j\to \infty} C_j=\infty$. 
If we take $j$ such that 
$C_j>\mu$, then we can see that 
$j$ is the upper bound of 
the dimension of 
\begin{align*}
H^0(X,E)
=\{ s\colon X\to E;\, s\mbox{ is a holomorphic section}\}.
\end{align*} 
Thus we have shown Theorem \ref{main 1-2}.

Moreover we suppose that $L=E$ is a 
holomorphic line bundle such that 
$c_1(L)=[\frac{\omega}{2\pi}]$. 
Then there is a hermitian metric such that 
$F^\nabla=-\sqrt{-1}\omega$, 
hence $\| F^\nabla\|_{L^\infty}$ is the universal constant. 
Therefore, we obtain the next result 
as the special case of Theorem 
\ref{main 1-2}.

\begin{thm}
For any $\kappa\in\R,D>0$ 
and $n\in\Z_{>0}$, 
there exist a positive integer $N$ 
depending only on 
$\kappa,D,n$ such that 
the following holds. 
For any closed K\"ahler manifold 
$(X,\omega)$ and holomorphic line 
bundle
$L\to X$ 
such that 
\begin{align*}
&\dim X = n,\quad 
c_1(L)=\left[ \frac{\omega}{2\pi} \right],\quad 
{\rm Ric}_\omega \ge \kappa \omega,\quad 
{\rm diam}_\omega X \le D,
\end{align*}
we have 
\begin{align*}
\dim H^0(X,L) \le N.
\end{align*}
\label{bdd of holomorphic sections 2}
\end{thm}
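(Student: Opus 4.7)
The plan is to deduce this directly from Theorem \ref{main 1-2} by showing that in the prequantum line bundle setting all the parameters of that theorem are controlled by $\kappa$, $D$, and $n$ alone.

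First I would equip $L$ with a Hermitian metric $h$ whose Chern connection $\nabla$ satisfies $F^\nabla = -\sqrt{-1}\omega$. Starting from any smooth Hermitian metric $h_0$, the Chern curvature has the form $-\sqrt{-1}\theta$ for a real $(1,1)$-form $\theta$ with $[\theta/(2\pi)] = c_1(L) = [\omega/(2\pi)]$; the $\ddb$-lemma on the compact K\"ahler manifold $X$ then produces $\varphi \in C^\infty(X,\R)$ with $\omega - \theta = \ddb\varphi$, and the rescaled metric $e^{-\varphi}h_0$ has the required Chern curvature.

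Next I would check the hypotheses of Theorem \ref{main 1-2}. Taking $\Lambda_\omega$ of $F^\nabla = -\sqrt{-1}\omega$ yields $\Lambda_\omega F^\nabla = n \cdot \mathrm{id}_L$, so $\nabla$ is Hermitian--Einstein with slope $\mu = n$, consistent with the topological formula in that theorem (as $c_1(L)\cdot[\omega]^{n-1}/[\omega]^n = 1/(2\pi)$). In a unitary frame for $L$ the pointwise norm of the curvature reduces to $|\omega|_\omega = \sqrt{n}$, so $\|F^\nabla\|_{L^\infty} \le c(n)$ for a constant $c(n)$ depending only on $n$.

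Finally I would invoke Theorem \ref{main 1-2} with parameters $\kappa$, $\mu = n$, $D' := \max(D, c(n))$, dimension $n$, and rank $r = 1$. This produces an integer $N$, depending only on $\kappa$, $D$, $n$ (since $\mu$, $c(n)$, and $r$ are themselves functions of $n$), such that $\dim H^0(X,L) \le N$. There is no real obstacle beyond the normalization step: once the Kodaira-type condition $F^\nabla = -\sqrt{-1}\omega$ is installed, both the Hermitian--Einstein slope and the $L^\infty$ bound on $F^\nabla$ become universal, and Theorem \ref{main 1-2} applies verbatim.
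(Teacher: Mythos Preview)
Your proposal is correct and follows exactly the paper's approach: choose a hermitian metric on $L$ with $F^\nabla=-\sqrt{-1}\omega$, observe that this makes $\nabla$ Hermitian--Einstein with slope and $L^\infty$ curvature bound depending only on $n$, and then invoke Theorem \ref{main 1-2}. You have simply supplied the details (the $\ddb$-lemma construction and the explicit computation $\mu=n$, $|F^\nabla|=\sqrt{n}$) that the paper leaves implicit.
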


\bibliographystyle{plain}

\end{document}